\numberwithin{theorem}{section}
\def\uno{\mathbb 1}
\def\b{\boldsymbol}
\newcommand{\Tau}{\mathcal{T}}
\renewcommand{\SS}{\mathcal{S}_1}
\newcommand{\SZ}{\mathcal{Z}_1}
\newcommand{\TP}{\boldsymbol{P}}
\newcommand{\R}{\mathbb{R}}
\newcommand{\II}{\mathcal{I}}
\title{Ergodicity coefficients for higher-order stochastic processes\thanks{\vspace{-1em} \funding{The work of D.F.\ was supported by INdAM-GNCS, Italy, and by the departmental research project ICON (Innovative Combinatorial Optimization in Networks), DMIF-PRID 2017, University of Udine, Italy. The work of F.T.\ was funded by the European Union's Horizon 2020 research and innovation
programme under the Marie Sk\l odowska-Curie individual fellowship ``MAGNET'' grant agreement no.\ 744014}}}
\author{Dario Fasino\thanks{Department of Mathematics, Computer Science and Physics, University of Udine, Udine, Italy. (\email{dario.fasino@uniud.it})} \and Francesco Tudisco\thanks{School of Mathematics, Gran Sasso Science Institute (GSSI), 67100, L'Aquila, Italy (\email{francesco.tudisco@gssi.it})}}
\begin{document}
	
	\maketitle
	
\begin{abstract}
The use of higher-order stochastic processes such as nonlinear Markov chains or vertex-reinforced  random walks is significantly growing in recent years as they are much better at modeling 
high dimensional 
data and nonlinear dynamics 
in numerous application settings. 
In many cases of practical interest, these processes 
are identified with
a stochastic tensor and their stationary distribution is a tensor $Z$-eigenvector. However, fundamental questions such as the convergence of the process towards a limiting distribution and the uniqueness of such a limit are still not well understood and are the subject of rich recent literature. 
Ergodicity coefficients for stochastic matrices provide a valuable and widely used tool to analyze the long-term behavior of standard, first-order, Markov processes.   
%
%
%
%Ergodicity coefficients for stochastic matrices provide valuable upper bounds for the magnitude of subdominant eigenvalues, allow to bound the convergence rate of methods for computing the stationary distribution of Markov processes and can be used to estimate the sensitivity of the stationary distribution to changes in the matrix. 
In this work, we extend an important class of ergodicity coefficients % defined in terms of the  1-norm 
to the setting of stochastic tensors. We show that the proposed higher-order ergodicity coefficients provide new explicit formulas that (a)  guarantee the   uniqueness of Perron $Z$-eigenvectors of stochastic tensors, (b) provide bounds on the sensitivity of such eigenvectors with respect to changes in the tensor and (c) ensure the  convergence of different types of  higher-order stochastic processes governed by cubical stochastic tensors. Moreover, we illustrate the advantages of the proposed ergodicity coefficients on several example application settings, including the analysis of PageRank vectors for triangle-based random walks and the convergence of lazy higher-order random walks.
\end{abstract}

\begin{keywords} 
Nonnegative tensors, Stochastic tensors, Higher-order Markov chain, Ergodicity coefficient, $Z$-eigenvector, Vertex reinforced random walk, Spacey random walk, Multilinear PageRank
\end{keywords}

\begin{AMS}
15B51, % Lin.Multilin.Algebra - Stochastic matrices
65F35, % Numerical linear algebra - Matrix norms, conditioning, scaling 
60J10, % Stoch. processes - Markov chains (discrete-time Markov processes on discrete state spaces)
65C40  % Numerical Analysis - Computational Markov chains
\end{AMS}

\section{Introduction}
\label{sec:introduction}
Markov processes are among the best known and most popular stochastic processes in computational mathematics and mathematics of data science. 
For these types of processes the state transitions only depend  on the last state. This is modeled by a stochastic matrix $P$ whose entries $P_{ij}$ quantify the probability of the process of transitioning from state $j$ to state $i$. % , i.e.\ an entrywise nonnegative matrix such that $\sum_i P_{ij}=1$ for all $j$. 
Any such a matrix leaves the simplex $\SS = \{x\geq 0: x^T \uno = 1\}$ invariant and the  classical Brouwer's fixed point theorem  thus implies that there exists at least one stationary distribution $x=Px$ for the Markov chain described by~$P$. 
In other words, there exists at least one eigenvector $x$ of $P$ corresponding to the eigenvalue $1$, such that  $x$ has nonnegative entries that sum up to one. 
While Brouwer's theorem holds in general for mappings leaving a closed convex set invariant, much more can be said for the specific case of stochastic matrices. In particular, if the Markov chain described by $P$ is ergodic, 
then $P$ has a unique nonnegative eigenvector $x$ in $\mathcal S_1$  which corresponds to the eigenvalue $1$, the magnitude of any other eigenvalue of $P$ is strictly smaller than one and the power method $x_{t+1} = Px_t$ converges to $x$, for any choice of $x_0\in \mathcal S_1$, with a convergence rate that depends on the largest sub-dominant eigenvalue. 
In a way, these properties characterize the concept of ergodic chain and the so-called ergodicity coefficients were introduced to estimate whether or not a Markov chain is ergodic without resorting to spectral properties \cite{Ergodic,SenetaBook}.

A natural extension of a Markov process is to have the state transitions depend on 
several previous states,
rather than just the last one. 
% We call this type of generalized Markov model a \textit{higher-order stochastic process}. 
% ---- (1) 
While the study and application of 
this kind of higher-order stochastic processes has a relatively long history, see e.g., \cite{Ben97,Raftery85}, their interest 
% towards this kind of processes 
has grown significantly in more recent years due to their ability to improve the mathematical modeling  and understanding of numerous problems in data and network sciences, such as  detecting communities and analyzing spreading dynamics in networks \cite{rosvall2014memory,williams2019effects}, understanding the behavior of web browsers and drivers trajectories   \cite{BGL17,chierichetti2012web}, defining new clustering algorithms in data mining that exploit motifs and non-backtracking walks \cite{Benson2015TensorSC,krzakala2013spectral}, improving centrality and ranking models for networks and hypergraphs \cite{arrigo2019non,MLPR, mei2010divrank,Ng2011MultiRank} and forecasting the appearance of new links or finding missing links in networks \cite{AHTproceedings,nassar2019pairwise}.

% ---- (2) 
Many higher-order stochastic processes of practical interest can be modeled by hypermatrices, or tensors, with $m$ modes $\b P = (\b P_{i_1,\dots,i_m})$. When $m=3$, for example, we have a second-order Markov chain if  $\b P_{ijk}$ quantifies the probability of transitioning to state $i$, given that the last state was $j$ and the previous one $k$. Another example is given by  linear vertex-reinforced random walks, where the transition probability from state $j$ to state $i$ is defined as $\sum_{k}\b P_{ijk}y_k$, for a vector $y$ which depends on the history of the states that have been  visited.
% ---- (3) 
The stationary distribution equation for
these higher-order stochastic processes boils down to the $Z$-eigenvector of the corresponding stochastic tensor. 
While the existence of such stationary distribution is ensured also in this setting by Brouwer's fixed point theorem, the ergodicity of
those processes is much less understood than that of matrix-based processes. 
By extending the wide and influential literature on ergodicity coefficients for matrices, in this work we introduce a family of higher-order ergodicity coefficients for stochastic cubical  tensors and discuss how these allow us to derive new conditions on the existence, uniqueness and computability of stationary distributions for different type of higher-order stochastic processes described by tensors. In particular,  second-order Markov chains  and a new  class of linear vertex-reinforced random walks for which, to the best of our knowledge, we provide the first convergence result for both the occupation vector and the density distribution. 
This class includes previously considered vertex-reinforced stochastic processes such as the spacey random walk \cite{BGL17}.

From the linear algebraic perspective, our new conditions allow us to prove guarantees for existence, uniqueness and computability of the Perron $Z$-eigenvector of a stochastic tensor of order three. Dominant $Z$-eigenvectors of nonnegative tensors appear in a large variety of applications, including diffusion kurtosis imaging in medical engineering \cite{qi2008d}, low-rank factorization and signal processing \cite{anandkumar2014tensor,de2000best}, quantum processing, quantum geometry and data mining \cite{arrigo2019HITS,benson2019three,hu2016computing}. 
Even though computing a prescribed $Z$-eigenvector is in general NP-hard \cite{hillar2013most}, the use of higher-order ergodicity coefficients allows us to identify a class of nonnegative tensors for which the Perron eigenvector can be approximated efficiently to an arbitrary precision. While we focus here on stochastic tensors of order three, we believe  the results here presented can be further extended to more general eigenvector problems for nonnegative tensors.

The remainder of the paper is structured as follows:
We fix the relevant notation in the next section. In Section \ref{sec:HOMC} we review the concept of higher-order Markov chain, its associated $Z$-eigenvector stationary distribution and the issues related to the ergodicity of this type of higher-order stochastic process. In Section \ref{sec:main_res} we recall the concept of ergodicity coefficient for a stochastic matrix and some of its properties. Then, in Section \ref{sec:HO_erg_coeff}, we introduce our new higher-order ergodicity coefficients for stochastic cubic tensors
and we prove some of our main results. In Section \ref{sec:Z-eig} we show how these apply to the ergodicity of higher-order stochastic processes. In particular, after recalling the 
definition %%% concept
of vertex-reinforced and spacey random walks, we introduce in Subsection \ref{sec:spacey} a general family of Markov processes with memory that includes the spacey random walk as particular case and we prove a new convergence result for this general stochastic process.  In Section \ref{sec:previous-work} we compare the ergodicity coefficients introduced in Section \ref{sec:HO_erg_coeff} with analogous coefficients found in the recent literature. 
Finally, in Section \ref{sec:Z-eig-examples}, 
we discuss a number of application examples that showcase the advantages of the proposed results. In particular, we consider the computation of the multilinear PageRank and its application to triangle-based random walks in networks, and the convergence of the shifted higher-order power method.

% --------------------------------------------------------
\section{Notation}

Let $e_i$ be the $i$-th canonical basis vector in $\R^n$
and let $\uno$ be the all-ones vector.
Define the sets $\SS = \{ x\in\R^n : x \geq 0, \|x\|_1 = 1\}$ and 
$\SZ = \{ x\in\R^n : \uno^Tx = 0, \|x\|_1 = 1\}$. 
A real cubical tensor $\b P$ of order $3$ (or, equivalently, with $3$ modes) is a
three-way array with real entries of size $n\times n\times n$. 
We denote by $\mathbb R^{[3,n]}$ the set of such tensors and use capital bold letters to denote its elements. The $(i,j,k)$-entry
of $\b P\in \mathbb R^{[3,n]}$ is denoted by $\b P_{ijk}$. 
Matrices are tensors with only $2$ modes and are denoted with standard capital letters.  

Given a tensor $\b P \in \mathbb R^{[3,n]}$,  
we write $\b P xy$ to denote the tensor-vector  multiplication over the second and third modes:  
$$
   (\b Pxy)_{i} = \sum_{j,k=1}^n \b P_{ijk} x_j y_k
$$
for $i=1,\dots,n$. 
Moreover, the product $\TP x$ denotes the matrix associated with the linear map $y\mapsto \TP xy$,
that is, 
\begin{equation}   \label{eq:collapse}
   (\TP x)_{ij} = \sum_{k=1}^n \TP_{ikj}x_k .
\end{equation}
With this notation, it holds $(\TP x)y = \TP xy$.
A $Z$-eigenvalue of a tensor $\TP \in \mathbb R^{[3,n]}$ is a real number $\lambda$ such that there exists a nonzero vector $x\in\R^n$ such that $\TP xx = \lambda x$. Such vector $x$ is a $Z$-eigenvector associated with $\lambda$, see  \cite{QiLuo}.

There are $6=3!$ possible transpositions of a tensor $\b P\in \mathbb R^{[3,n]}$, each corresponding to a different permutation $\pi$ of the set $\{1,2,3\}$. Using the notation proposed in \cite{ragnarsson2012block}, the transposed tensor corresponding to the permutation $\pi$ can be denoted by $\b P^{\left<\pi\right>}$, namely,
$
     ( \b P^{\left<\pi\right>})_{ijk} = \b P_{\pi(i),\pi(j),\pi(k)} .
$ 
As it will be of particular importance to us, we devote the special notation  $\b P^S$ to denote the tensor obtained by transposing the entries of  $\b P$ over the second and third modes, namely 
$$
   \b P^S = \b P^{\left<[132]\right>}, \qquad  
   (\b P^S)_{ijk} = \b P_{ikj} .
$$
Moreover, we say that a tensor $\TP$ is $S$-symmetric
whenever $\TP = \TP^S$.

All inequalities in this work are meant entry-wise. In particular, we write $\b P\geq 0$ (resp., $\b P>0$) to denote a tensor such that $\b P_{ijk}\geq 0$ (resp., $\b P_{ijk}>0$) for all indices $i,j,k=1,\dots,n$. A tensor $\b P \in \mathbb R^{[3,n]}$ is said to be column stochastic or simply stochastic, if $\b P\geq 0$ and its first mode entries all sum up to one, i.e., $\sum_{i=1}^n \b P_{ijk} =1$, $\forall j,k=1,\dots,n$. 
A tensor acting as the identity on the unit sphere $x^Tx = 1$
can be defined in the case of tensors with an even number of modes, see \cite{KoMa11}.
For tensors with three modes we define the following two left $\b E^L$ and right $\b E^R$  ``one-sided identity tensors'':
\begin{equation}   \label{eq:Etensors}
   \b E^L_{ijk} = \delta_{ij} \qquad \text{and} \qquad 
   \b E^R_{ijk} = \delta_{ik} , \qquad \text{for all } i,j,k=1,\dots,n.
\end{equation} 
Both $\b E^L$ and $\b E^R$ 
are stochastic tensors  and for all $x\in \SS$ and $v\in\mathbb R^n$
one has $\b E^L vx = v$ and $\b E^R xv = v$.
Indeed,
$
   (\b E^L vx)_i = \sum_{jk}\b E^L_{ijk}v_jx_k = 
   \sum_{jk}\delta_{ij}v_j x_k = 
   v_i\sum_{k}x_k = v_i 
$ 
and similarly for $\b E^R$.
Note that, letting $\b E = \alpha \b E^L + (1-\alpha) \b E^R$ for any $\alpha\in[0,1]$, it holds $\b E xx = x$ for all $x\in\SS$.

% -------------------------------------------------

\section{Higher-order Markov chains} \label{sec:HOMC}

Higher-order Markov chains
are a natural extension of Markov chains, where the transition probabilities depend on the past
few states, rather than just the last one. 
For a plain introduction, see e.g., \cite{BGL17,WuChu17}.
For example, a discrete-time second-order Markov chain is defined by a third-order tensor  $\TP = (\TP_{ijk})$ where $\b P_{ijk}$ is the conditional probability of transitioning to state $i$, given that the last state was $j$ and the second last state was $k$.
More precisely, if $X(t)$ is the random variable describing the status of the chain
on the set $\{1,\ldots,n\}$ at time $t=0,1,\ldots$, then 
$$
   \TP_{ijk} = \mathbb{P}(X(t+1) = i
   | X(t) = j, X(t-1) = k) ,
$$
where $\mathbb{P}$ denotes probability.
Hence, the sequence $\{X(t)\}$ obeys the rule
\begin{equation}   \label{eq:Markov2}
   \mathbb{P}(X(t+1) = i) = \sum_{j,k}
   \TP_{ijk} \mathbb{P}(X(t) = j,X(t-1) = k) .
\end{equation}
Obviously it must hold $\sum_i \TP_{ijk} = 1$
for $j,k = 1,\ldots,n$, i.e., the tensor $\TP$ is stochastic.

Let $x_t\in \SS$ be the probability vector of the random variable $X(t)$, i.e., the vector with entries $(x_t)_i = \mathbb{P}(X(t) = i)$. 
Let $Y_t$ denote the joint probability function $(Y_t)_{ij}= \mathbb{P}( X(t) = i, X(t-1) = j) $. 
Then, $x_t$ is the marginal probability $Y_t \uno$, i.e., 
the vector with entries $(x_t)_i = \sum_{j}(Y_t)_{ij}$. 
Hence, the dynamic of the second-order Markov chain \eqref{eq:Markov2} is described by the two-phase process \vspace{-.5em}
\begin{align}\label{eq:2ndMC}
\begin{cases}
%   x^{(t+1)} = \TP Y^{(t,t-1)} &\\  %% Originale
%   x^{(t+1)}_i = \sum_{j,k} \TP_{ijk}Y^{(t,t-1)}_{jk}   \\
    (Y_{t+1})_{ij}=\sum_{k}\TP_{ijk}(Y_t)_{jk} &  \\
    (x_{t+1})_i = \sum_{j} (Y_{t+1})_{ij} . & 
    \end{cases}
\end{align}
Note that both steps in \eqref{eq:2ndMC} are linear and thus their convergence can be analyzed using standard ergodicity  arguments. In fact, the second-order Markov chain over the state set $\{1,\dots, n\}$ can be easily reduced to 
a first-order Markov chain with state space $\{1,\dots, n\}\times \{1,\dots, n\}$,
see e.g., \cite{BGL17,WuChu17}. 
Thus, 
under appropriate hypotheses on $\TP$, the iteration \eqref{eq:2ndMC}
has a unique limit  $Y\geq 0$ such that 
\begin{equation}   \label{eq:YPY}
   Y_{ij} = \sum_{k=1}^n \TP_{ijk}Y_{jk} .
\end{equation}
However, this approach has a computational drawback: the size of the joint probability function of a second-order Markov chain is
the square of the number of states. The 
situation gets even worse for an $m$-th order Markov chain due to the ``curse of dimensionality'' effect: the memory space required by the joint density
grows exponentially with the states space size, requiring $n^{m}$ entries.  Moreover, the convergence analysis of the iteration \eqref{eq:2ndMC} and its natural extension to the $m>2$ setting becomes cumbersome. 

In order to circumvent these issues, 
Raftery \cite{Raftery85} proposed a technique to approximate higher-order Markov chains by means of a linear combination of first-order ones, by assuming that the joint probability distribution of the lagged random variables 
$X(t),\ldots,X(t-m+1)$ can be replaced by a mixture of its marginals. 
In the second-oder ($m = 2$) case, that assumption reduces to replacing the conditional probabilities $\TP_{ijk}$ by an expression of the form
$\lambda Q_{ij} + (1-\lambda)Q_{ik}$, where $Q$ is a stochastic matrix and $\lambda\in[0,1]$.
This technique, known as the Mixture Transition Distribution model, has been widely used to 
fit stochastic models with far fewer parameters than the fully parameterized model
to multi-dimensional data in a variety of applications  \cite{BerRaf02,RafTav94}.

A more recent approach, which maintains all the information contained in the transition tensor $\TP$, is the one proposed by Li and Ng
in \cite{LiNg}. Here, 
still in the $m = 2$ case, one assumes that the joint probability distribution of the higher-order Markov chain is the  
tensor product of its marginal distributions, that is, $Y_t=x_t\,   x_{t-1}^T$.
This hypothesis, which is equivalent to assuming that the random variables $X(t)$ and $X(t-1)$ are independent, 
is a conceptual simplification of the Markov chain formalism that is introduced in order to obtain a computationally tractable extension to the  second-order case.
  The resulting process is the quadratic version of a nonlinear Markov process \cite{kolokoltsov2010nonlinear}  
and it is still called a second-order Markov chain by many authors, see e.g., \cite{HuQi14,LiZhang15,LiNg}.
In this work, we will follow this well established convention.

Using our tensor-vector product notation, this ``reduced'' higher-order Markov process boils down to
the iteration
\begin{equation}   \label{eq:2omc}
   x_{t+1} = \b P  x_t x_{t-1} ,
\end{equation}
which
replaces \eqref{eq:2ndMC} and is the higher-order counterpart of the usual Markov process for a stochastic matrix in the  classical (first-order) Markov chain setting. The limit of this sequence, if it exists, is a nonnegative vector $ x\in \mathcal S_1$ such that 
\begin{equation}   \label{eq:xPxx}
    x = \TP  x  x ,
\end{equation}
that is, $ x$ is a $Z$-eigenvector of $\b P$ associated with the $Z$-eigenvalue $1$. Thus, it is natural to consider 
any such
vector as a stationary density of the Markov chain \eqref{eq:Markov2}.

Note that the limit matrix $Y$  of \eqref{eq:2ndMC} is such that 
$Y\uno = Y^T\uno$. Indeed, from \eqref{eq:YPY} we have
$$
   (\uno^T Y)_j = \sum_i Y_{ij} = 
   \sum_k Y_{jk} \sum_{i}\TP_{ijk} = 
   \sum_k Y_{jk} = (Y\uno)_j .
$$
But that row-column sum is generally different from 
the vector $x$ in \eqref{eq:xPxx}.
In fact, that solution
corresponds to a case where $Y$ has rank one, 
namely, $Y =  x x^T$.
% \ft{i.e.\ 
Indeed, if $Y$ in \eqref{eq:YPY} is such that $ \mathrm{rank}(Y)=1$, then $x = Y\uno$ must solve \eqref{eq:xPxx}. 

On the other hand, the converse implication is false in general; that is, if $x$ solves \eqref{eq:xPxx}
then the matrix $Y = x x^T$ may not be a solution of \eqref{eq:YPY}. 
Indeed, extensive numerical experiments reported in \cite{WuChu17} show that the vector $x$ is strongly correlated with the row-column sum vector of $Y$, but the matrix $Y$ has full rank in general and $x \neq Y\uno$.

% ------------------------------------------------
\subsection{Ergodicity of higher-order Markov chains}

In the matrix case, a Mark\-ov chain is called ergodic whenever it has a unique stationary vector and, for any initial probability distribution, that vector is the limiting distribution of the chain. Necessary and sufficient conditions for ergodicity of Markov chains
are well known, and are essentially related to spectral and structural properties, e.g., irreducibility 
and aperiodicity, of the transition matrix
\cite{SenetaBook}.

The situation complicates significantly when moving from matrices to tensors and, more generally, from linear to nonlinear cases \cite{kolokoltsov2010nonlinear}. In fact, even though  the existence of a solution $x\in \SS$ to \eqref{eq:xPxx} is a direct consequence of the Brouwer's fixed point theorem, the properties that characterize uniqueness and convergence of the process to the stationary distribution do not extend straightforwardly from the matrix case.  For example, unlike the matrix case, the irreducibility of $\b P$ is not enough to ensure the uniqueness of $x$ and additional assumptions are required. 
In fact, it is not too difficult to produce examples of entrywise positive stochastic tensors for which the equation \eqref{eq:xPxx} has multiple solutions,  or the solution of \eqref{eq:xPxx} is unique
but the iteration \eqref{eq:2omc} fails to converge to that solution. For instance,  a $\TP\in\R^{[4,2]}$ example is provided by Chang and Zhang in \cite{chang2013uniqueness}, while several $\TP\in\R^{[3,3]}$ examples are provided by Saburov in \cite{Sab19}. 

A  sufficient condition that ensures ergodicity is the existence of a metric 
% on $\SS$ 
with respect to which the system is contractive. Even though, as in the linear case, this is a sufficient but not a necessary requirement in general, %this does not cover all the ergodic chains, 
suitable choices of the  metric  can provide valuable  
  conditions for the ergodicity of higher-order stochastic processes that can be given in terms of the entries of the tensor $\b P$. 
  
By considering the $\ell^1$ and the Hilbert metrics on $\SS$, in the following we introduce a family of ergodicity coefficients for stochastic cubic tensors of order three and we  show, in Section \ref{sec:Z-eig}, how they allow us to prove new conditions for the ergodicity of various higher-order stochastic processes. %uniqueness of a positive solution to \eqref{eq:xPxx} and the .
The conditions we obtain in this way can be easily computed and are, to the best of our knowledge, among the weakest conditions available in the literature so far.

% ------------------------------------------------
\section{Coefficients of ergoditicy}\label{sec:main_res}

Let $d:\SS\times \SS\to \mathbb R_+$ be a metric on $\mathcal S_1=\{x\geq 0 : x^T\uno = 1\}$ 
and  consider a mapping  $f:\mathcal S_1\to\SS$. Although other notions of 
ergodicity coefficient are available in the literature, see e.g., \cite{Ergodic}, 
for the purpose of this work a coefficient of ergodicity for $f$ 
is the best Lipschitz constant of $f$ with respect to $d$, that is
\begin{equation}\label{eq:tau(f)}
   \tau_d(f) = 
   \sup_{\substack {{x,y\in \mathcal S_1} \\
   {x\neq y}}}\frac{d(f(x),f(y))}{d(x,y)} \, .
\end{equation}
Different choices of the metric $d$ give rise to different notions of ergodicity coefficients. For example, if $d$ is the Hilbert projective distance 
\begin{equation}\label{eq:hilbert}
     d_H(x,y) =
   \log \Big(\max_i\frac{x_i}{y_i} \max_i\frac{y_i}{x_i}\Big)
\end{equation}
then \eqref{eq:tau(f)} is the so-called Birkhoff contraction ratio \cite{birkhoff1}, which we denote by $\tau_H(f)$. This choice of metric is particularly interesting because it extends very naturally to the case of mappings $f$ that leave the slice of a generic proper cone  invariant (not just $\mathcal S_1$). Moreover, when $f$ is a linear map described by the matrix $A$, the Birkhoff--Hopf theorem \cite{BHNB} provides an explicit formula for $\tau_H(f)=\tau_H(A)$, which we recall below:

$$
   \tau_H(A) = \tanh\left(\frac14 \log \Big(\max_{ijhk}
   \frac{A_{ij}A_{hk}}{A_{ik}A_{hj}}\Big) \right) ,
$$
where  $\tanh(\lambda) = (e^\lambda-e^{-\lambda})/(e^\lambda+e^{-\lambda})$ denotes the hyperbolic tangent. 
An equivalent formula can be found also in \cite[\S 3.4]{SenetaBook}.
More recently, in \cite{gautier2018contractivity}, an analogous explicit formula has been proved for the case where $f$ is a (weakly) multilinear mapping induced by a nonnegative tensor. In particular, this formula holds for the case of $Z$-eigenvectors of cubic stochastic tensors and we will review it in this setting in Subsection~\ref{sec:BH}.

Another popular and successful choice for the distance in \eqref{eq:tau(f)} is $d(x,y) = \|x-y\|_p$, where $\|\cdot\|_p$ is the $p$-norm on $\mathbb R^n$.  Norm-based coefficients were introduced by Dobrushin in 1956 \cite{dobrushin1956central}  for the case of linear mappings and have been the subject of numerous investigations afterwards, see e.g., \cite{Ergodic,Sab19,SenetaBook}. 

In Section \ref{sec:HO_erg_coeff} we  analyze properties of norm-based coefficients for mappings defined by a stochastic tensor $\b P$.
% as $(x,y)\mapsto \TP xy$. 
To this end, we first review some relevant properties of these coefficients for the case of linear maps. 

% ---------------------------------------------------------------------
\subsection{Norm-based ergodicity coefficients for matrices}
\label{par:erg_coeff_matrices}

Let $P$ be a stochastic matrix and $p\geq 1$. The $p$-norm ergodic coefficient of $P$ is 
$$
   \tau_p(P) = \sup_{\substack {{x,y\in \mathcal S_1}\\{x\neq y}}}\frac{\|Px-Py\|_p}{\|x-y\|_p} .
$$
This definition extends obviously to any matrix $P\in\R^{n\times n}$, when appropriate.
The linearity of $P$, the continuity of $\|\cdot\|_p$ and the fact that the set
$\{z \in \mathbb R^n : z=(x-y)/\|x-y\|_p,\,  x,y\in \mathcal S_1\}$ 
coincides with $\mathcal{Z}_p = \{z \in \R^n : \|z\|_p = 1,\, \uno^Tz = 0\}$, which is compact,
yield the equivalent formula
$$
  \tau_p(P) = 
  \max_{\substack {{\|x\|_p=1}\\{x^T \uno = 0}}} \|Px\|_p  .
$$
Norm-based ergodicity coefficients for stochastic  matrices $P$ are particularly useful for three reasons:  they provide sufficient conditions for the ergodicity of the Markov chain  associated with $P$;
they can be used to derive  bounds on the variation of the stationary distribution of the Markov chain, when the transition probabilities face a small perturbation;  
and,  
% when using the $1$-norm (i.e.\ $p=1$), 
in the case $p = 1$  they yield easily computable upper bounds on the convergence rate of the Markov process $x_{t+1}=Px_t$. We review these properties in the next Theorems \ref{thm:pert}, \ref{thm:ergodic_thm_matrices} and  \ref{thm:formulas_for_tau_matrices}. Then, in Sections \ref{sec:HO_erg_coeff} and \ref{sec:Z-eig}, we will use the $1$-norm ergodicity coefficients for matrices to introduce what we call higher-order ergodicity coefficients for stochastic tensors of order three and we will show that the above three fundamental properties carry over to the tensor case.
We refer to \cite{Ergodic,SenetaBook,tudisco2015note} for 
more details on $\tau_p(P)$.

The following properties follow directly from the definition of $p$-norm ergodic coefficient.

\begin{theorem}\label{thm:tau_p_properties}
If $P,Q\in\R^{n\times n}$ are stochastic, then
\begin{enumerate}
\item $0\leq \tau_p(P)\leq \|P\|_p$
\item $|\tau_p(P)-\tau_p(Q)|\leq \tau_p(P-Q)$
\item $\tau_p(P) = 0$ if and only if $\mathrm{rank}(P)=1$.
\end{enumerate}
\end{theorem}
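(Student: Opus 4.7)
The plan is to exploit the alternative formula
$$\tau_p(P)=\max_{\|x\|_p=1,\, \uno^Tx=0}\|Px\|_p$$
stated just above the theorem, which realises $\tau_p$ as an operator norm of $P$ restricted to the hyperplane $\mathcal{Z}_p=\{z\in\R^n:\|z\|_p=1,\,\uno^Tz=0\}$; each of the three claims will then follow from an elementary linear-algebraic argument.

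For claim (1), I would note that nonnegativity is immediate from the definition as a supremum of nonnegative ratios, while the upper bound follows since $\mathcal{Z}_p$ is contained in the unit sphere of $\|\cdot\|_p$, so that maximising $\|Px\|_p$ over the smaller set yields at most $\max_{\|x\|_p=1}\|Px\|_p=\|P\|_p$.

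For claim (2), I would first observe that although $P-Q$ is not stochastic, the coefficient $\tau_p$ has been defined for arbitrary matrices through the same maximisation over $\mathcal{Z}_p$, and that $\tau_p(-A)=\tau_p(A)$ because the $p$-norm is sign-invariant. Then for each $z\in\mathcal{Z}_p$, the decomposition $Pz=(P-Q)z+Qz$ together with the triangle inequality gives $\|Pz\|_p\leq\tau_p(P-Q)+\tau_p(Q)$; taking the maximum over $z$ yields $\tau_p(P)-\tau_p(Q)\leq\tau_p(P-Q)$, and swapping the roles of $P$ and $Q$ produces the two-sided bound.

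For claim (3), I would handle the two implications separately. If $\mathrm{rank}(P)=1$, I would write $P=uv^T$ and use column stochasticity $\uno^TP=\uno^T$ to force $(\uno^Tu)v^T=\uno^T$, so that $v$ is a scalar multiple of $\uno$ and $Pz=(v^Tz)u=0$ for every $z\in\mathcal{Z}_p$, whence $\tau_p(P)=0$. Conversely, if $\tau_p(P)=0$ then $Pz=0$ for every $z$ with $\uno^Tz=0$, so $\ker P$ contains an $(n-1)$-dimensional hyperplane and $\mathrm{rank}(P)\leq1$; stochasticity rules out $P=0$, leaving $\mathrm{rank}(P)=1$. No single step constitutes a genuine obstacle, but the only care required is in claim (2), where I must remember that $\tau_p$ is defined beyond the stochastic setting so that $\tau_p(P-Q)$ is meaningful, and in the forward direction of claim (3), where column stochasticity must be used to deduce the explicit rank-one structure of $P$ that makes it annihilate $\mathcal{Z}_p$.
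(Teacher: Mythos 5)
Your proof is correct and is precisely the routine verification the paper has in mind: the paper gives no proof, stating only that these properties ``follow directly from the definition,'' and your use of the equivalent formula $\tau_p(P)=\max_{z\in\mathcal{Z}_p}\|Pz\|_p$ together with the triangle inequality and the observation that stochasticity forces a rank-one $P=uv^T$ to have $v\propto\uno$ is exactly that direct argument. No gaps.
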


Moreover,
the following perturbation bound holds (see e.g.\ \cite{Seneta88} or \cite[Thm.\ 3.14]{Ergodic}).

\begin{theorem}    \label{thm:pert}
Let $P,P'$ be two stochastic irreducible matrices, and let $x,x'$ be their corresponding stationary probability vectors.
Then
$$
   \|x - x'\|_p \leq \frac{\|P-P'\|_p}{1-\tau_p(P)} .
$$
\end{theorem}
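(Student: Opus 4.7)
The plan is to derive a self-referential inequality for $\|x-x'\|_p$ by exploiting the fact that $x$ and $x'$ are fixed points of $P$ and $P'$ respectively, and then to solve for $\|x-x'\|_p$. The key algebraic step is to write
\[
x - x' = Px - P'x' = P(x - x') + (P - P')x' ,
\]
obtained by adding and subtracting $Px'$. This cleanly splits the difference into a ``transport'' part driven by $P$ acting on the sum-zero vector $x - x'$, plus a ``source'' term $(P-P')x'$ that measures the perturbation.

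Next I would take $\ell^p$-norms and apply the triangle inequality to get $\|x - x'\|_p \leq \|P(x-x')\|_p + \|(P-P')x'\|_p$. For the first summand, since $x,x' \in \SS$ are probability vectors we have $\uno^T(x-x') = 0$, so the equivalent characterization $\tau_p(P) = \max\{\|Pz\|_p : \|z\|_p = 1,\ \uno^T z = 0\}$ recorded just before Theorem~\ref{thm:tau_p_properties} yields the contractive bound $\|P(x-x')\|_p \leq \tau_p(P)\|x-x'\|_p$. For the second summand, the induced operator-norm inequality combined with the fact that $\|x'\|_p \leq \|x'\|_1 = 1$ for any stochastic vector and any $p \geq 1$ gives $\|(P-P')x'\|_p \leq \|P-P'\|_p$. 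Collecting these two estimates and rearranging yields $(1 - \tau_p(P))\|x-x'\|_p \leq \|P-P'\|_p$, which is the claim.

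The one step that requires care is the positivity of $1 - \tau_p(P)$, which is implicit in the statement: under irreducibility the stationary distribution is unique, so $P$ cannot admit a second sum-zero eigenvector of eigenvalue $\pm 1$, forcing $\tau_p(P) < 1$ in the cases where the bound is meaningful (otherwise the stated inequality is vacuous). The main conceptual obstacle is simply the recognition that the constraint $\uno^T(x - x') = 0$ is exactly what allows the ergodicity coefficient $\tau_p(P)$ to replace the full operator norm when controlling $P(x-x')$; everything else is routine manipulation of a fixed-point identity.
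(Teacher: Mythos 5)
Your proof is correct and follows the standard argument (the one in the cited references), which is also exactly the strategy the paper itself uses for the tensor analogue, Theorem~\ref{thm:cond3}: add and subtract the cross term, apply the triangle inequality, bound the first piece by $\tau_p(P)$ using $\uno^T(x-x')=0$ and the second by the operator norm, then rearrange. Your closing caveat is also apt — irreducibility alone does not force $\tau_p(P)<1$ (e.g.\ periodic chains), so the derived inequality $(1-\tau_p(P))\|x-x'\|_p\leq\|P-P'\|_p$ is the universally valid form, and the stated bound follows whenever $\tau_p(P)<1$.
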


As an immediate consequence of the definition \eqref{eq:tau(f)}, the inequality $\tau_p(P) < 1$ implies that the map $f:\SS\mapsto\SS$ defined by $f(x) = Px$ is a contraction.
This observation implies the following result.

\begin{theorem}   \label{thm:ergodic_thm_matrices} 
If $P$ is a stochastic matrix with $\tau_p(P)<1$ for some $p\geq 1$ then $P$ is ergodic, i.e., there exists a unique eigenvector $x\in \mathcal S_1$ such that $Px=x$.
Moreover, the power method $x_{t+1}=Px_{t}$ converges to $x$ for any $x_0\in\mathcal S_1$, and 
$$
   \| x_t - x \|_p \leq \tau_p(P)^t \| x_0 - x \|_p .
$$
\end{theorem}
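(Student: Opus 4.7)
The plan is to apply Banach's fixed point theorem to the map $f:\mathcal S_1\to\mathcal S_1$ defined by $f(x)=Px$. First I would verify that $f$ is well-defined, i.e.\ that it sends $\mathcal S_1$ into itself: since $P$ is column stochastic, for every $x\in\mathcal S_1$ one has $Px\geq 0$ and $\uno^T(Px)=(\uno^TP)x=\uno^Tx=1$, so $Px\in\mathcal S_1$. Next, from the very definition \eqref{eq:tau(f)} of $\tau_p(P)$ as the supremum of the Lipschitz ratios $\|Px-Py\|_p/\|x-y\|_p$ over $x,y\in\mathcal S_1$, $x\neq y$, one gets the genuine Lipschitz inequality
$$
   \|Pu-Pv\|_p \;\leq\; \tau_p(P)\,\|u-v\|_p \qquad\text{for every } u,v\in\mathcal S_1.
$$
Since $\tau_p(P)<1$, the map $f$ is therefore a strict contraction on $\mathcal S_1$.

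Then I would observe that $\mathcal S_1$ is a closed, bounded (hence complete) subset of $\R^n$ equipped with the metric induced by $\|\cdot\|_p$. Banach's fixed point theorem applies and yields a unique $x\in\mathcal S_1$ with $f(x)=Px=x$, together with the convergence $x_t\to x$ of the iteration $x_{t+1}=Px_t$ from any initial datum $x_0\in\mathcal S_1$. This already gives the ergodicity statement, because any other $P$-stationary vector in $\mathcal S_1$ would be a second fixed point of $f$, contradicting uniqueness.

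For the rate estimate I would just iterate the contraction inequality. Since $x_t\in\mathcal S_1$ and $x\in\mathcal S_1$, the Lipschitz bound above applied to $u=x_t$, $v=x$ gives
$$
   \|x_{t+1}-x\|_p = \|Px_t-Px\|_p \leq \tau_p(P)\,\|x_t-x\|_p,
$$
and a straightforward induction on $t$ yields $\|x_t-x\|_p \leq \tau_p(P)^t\|x_0-x\|_p$, which is the claimed estimate.

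There is no truly hard step here: the whole argument is an almost immediate consequence of the definition of $\tau_p(P)$ together with the simplex-invariance of $P$. The only point that deserves a moment of care is making explicit that the supremum defining $\tau_p(P)$ translates into a pointwise Lipschitz inequality valid for \emph{every} pair $u,v\in\mathcal S_1$ (and not just asymptotically), since it is this pointwise inequality that feeds into both Banach's theorem and the geometric decay of $\|x_t-x\|_p$.
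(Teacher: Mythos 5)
Your proof is correct and follows exactly the route the paper indicates: the remark immediately preceding the theorem notes that $\tau_p(P)<1$ makes $x\mapsto Px$ a contraction on $\mathcal S_1$, and the conclusion is then the Banach fixed point theorem on the complete metric space $(\mathcal S_1,\|\cdot\|_p)$ together with iteration of the Lipschitz bound. Nothing is missing.
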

The theorem above gives a sufficient condition for the ergodicity of $P$ which is very useful in practice  when combined with a number of explicit formulas that allow to compute $\tau_p(P)$ using only the entries of $P$. 
Here we recall those for the particular case $p=1$ \cite{dobrushin1956central}. 

\begin{theorem}\label{thm:formulas_for_tau_matrices}
Let $P\in\R^{n\times n}$. Then
$$
   \tau_1(P) = \frac12 \max_{j,k} \sum_i |P_{ij} - P_{ik}| .
$$
Moreover, if $P$ is stochastic then
\begin{align*}
\tau_1(P) &= 1 - \min_{jk}\sum_{i} \min\{P_{ij},P_{ik}\} = 1 - \min_{\II\subset\{1,\ldots,n\}} \bigg(\min_{j}\sum_{i\notin\II}P_{ij}+\min_{k}\sum_{i\in\II}P_{ik}\bigg) .
\end{align*}
\end{theorem}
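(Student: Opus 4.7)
The plan is to establish the three formulas in sequence, starting from the variational characterization $\tau_1(P)=\max\{\|Px\|_1 : x \in \SZ\}$ already recalled in Section~\ref{par:erg_coeff_matrices}. For the first identity, I would exploit the Jordan decomposition of elements of $\SZ$: any $z\in\SZ$ writes as $z = z_+ - z_-$ with $z_+,z_-\geq 0$ having disjoint support, and the constraints $\uno^Tz=0$ and $\|z\|_1=1$ force $\|z_+\|_1 = \|z_-\|_1 = 1/2$. Hence $z = (u-v)/2$ for some $u,v\in\SS$, so $\|Pz\|_1 = \tfrac12\|Pu-Pv\|_1$. The map $(u,v)\mapsto\|Pu-Pv\|_1$ is convex on the polytope $\SS\times\SS$, hence its maximum is attained at an extreme point, i.e.\ at a canonical basis pair $u=e_j$, $v=e_k$, yielding $\|Pe_j-Pe_k\|_1 = \sum_i|P_{ij}-P_{ik}|$; taking the maximum over $j,k$ gives the first formula. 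The one mild point to check is that requiring $u,v$ to have disjoint support does not restrict the supremum, but this is automatic because pairs $(e_j,e_k)$ with $j\neq k$ are feasible and the case $j=k$ contributes zero.

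For the second identity, I would invoke the elementary relation $|a-b| = a+b-2\min(a,b)$ valid for $a,b\geq 0$. Summing over $i$ and using column-stochasticity $\sum_i P_{ij} = \sum_i P_{ik} = 1$ gives $\sum_i|P_{ij}-P_{ik}| = 2 - 2\sum_i\min(P_{ij},P_{ik})$. Substituting into the first formula and rewriting $\max_{jk}(1-a_{jk}) = 1-\min_{jk}a_{jk}$ yields the second expression.

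For the third identity, I would exchange inner and outer optimizations via indicator sets. For any fixed pair $j,k$, choosing $\II(j,k) = \{i : P_{ij}>P_{ik}\}$ gives the pointwise identity $\sum_i\min(P_{ij},P_{ik}) = \sum_{i\notin\II(j,k)}P_{ij} + \sum_{i\in\II(j,k)}P_{ik}$; replacing each of the two sums by its minimum over $j$ and over $k$ respectively can only decrease the value, so the quantity $\min_{jk}\sum_i\min(P_{ij},P_{ik})$ is bounded below by the decoupled expression on the right of the third formula. Conversely, fix $\II\subset\{1,\dots,n\}$ and let $j^\star,k^\star$ attain the two inner minima; the chain of inequalities $\sum_{i\notin\II}P_{ij^\star} + \sum_{i\in\II}P_{ik^\star} \geq \sum_i\min(P_{ij^\star},P_{ik^\star}) \geq \min_{jk}\sum_i\min(P_{ij},P_{ik})$ followed by a minimization over $\II$ closes the loop. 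The main subtle step, as I see it, is precisely this decoupled-versus-coupled minimization: the two inner minima may be attained at different column indices, so one has to bound both terms simultaneously via a single joint quantity and verify that no slack is introduced when passing to the outer minimum over $\II$.
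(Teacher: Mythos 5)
Your proof is correct. Note that the paper itself does not prove this theorem: it is recalled as a classical result of Dobrushin (with pointers to Seneta and Ipsen--Selee), so there is no in-paper argument to compare against; what you have written is a complete, self-contained derivation of the cited formulas. Each of your three steps checks out. The Jordan decomposition $z=z_+-z_-$ together with $\uno^Tz=0$ and $\|z\|_1=1$ does force $\|z_+\|_1=\|z_-\|_1=\tfrac12$, so every $z\in\SZ$ is $\tfrac12(u-v)$ with $u,v\in\SS$, and the convexity of $(u,v)\mapsto\|Pu-Pv\|_1$ on the polytope $\SS\times\SS$ legitimately pushes the maximum to a vertex pair $(e_j,e_k)$; your remark that the disjoint-support restriction costs nothing is the right point to flag, and your resolution (the pairs $(e_j,e_k)$, $j\neq k$, are themselves feasible, while $j=k$ contributes zero) closes it. The identity $|a-b|=a+b-2\min\{a,b\}$ plus column-stochasticity gives the second formula immediately. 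For the third, your two-sided argument is exactly what is needed: the choice $\II(j,k)=\{i:P_{ij}>P_{ik}\}$ shows the coupled minimum dominates the decoupled one, and the chain $\sum_{i\notin\II}P_{ij^\star}+\sum_{i\in\II}P_{ik^\star}\geq\sum_i\min\{P_{ij^\star},P_{ik^\star}\}\geq\min_{j,k}\sum_i\min\{P_{ij},P_{ik}\}$ gives the reverse inequality, so no slack survives the outer minimization over $\II$. This is essentially the standard textbook route to Dobrushin's formulas, and it is the same mechanism the authors later redeploy (via the slice matrices $P^{(i)}$) to prove the tensor analogues in Theorem \ref{thm:TauL} and Corollary \ref{cor:TauR}.
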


We will devote Sections \ref{sec:HO_erg_coeff} and \ref{sec:Z-eig} to extend the ergodicity coefficient $\tau_1(P)$ 
to three-mode tensors, to prove analogous theorems to the preceding ones and to discuss further properties and applications.

% --------------------------------------------------------------------
\section{Ergodicity coefficients for third-order tensors}   
\label{sec:HO_erg_coeff}

Let $\b P \in \R^{[3,n]}$ be a cubic stochastic tensor. 
%Recall the previously introduced notations $\SS = \{x\geq 0 : x^T\uno = 1\}$ and $\SZ = \{x: \|x\|_1 = 1, x^T\uno = 0\}$.
We define the following higher-order ergodicity coefficients: 
\begin{align}
\begin{aligned}\label{eq:TauDef}
   \Tau_L(\TP) & = \max_{x\in\SS} \max_{y\in\SZ} \|\b Pxy\|_1 \\
   \Tau_R(\TP) & = \max_{x\in\SS} \max_{y\in\SZ} \|\b Pyx\|_1 \\
   \Tau(\TP) & = \max_{x\in\SS} \max_{y\in\SZ} \|\b Pxy + \b Pyx\|_1 .
\end{aligned}
\end{align}  
The preceding definitions are extended obviously to any tensor $\b P \in \R^{[3,n]}$,
when appropriate. 
We remark the following immediate identities:
$$
   \Tau_L(\TP) = \Tau_R(\TP^S) , \qquad 
   \Tau(\TP) = 2 \, \Tau_L(\textstyle \frac12 \TP + \frac12 \TP^S) 
   = 2 \, \Tau_R(\textstyle \frac12 \TP + \frac12 \TP^S).
$$
In particular, for an $S$-symmetric tensor $\TP$ we have
$\Tau_L(\TP) = \Tau_R(\TP) = \frac12\Tau(\TP)$.

The relationship between the preceding definitions and the norm-based ergodicity coefficients considered in Section  \ref{par:erg_coeff_matrices} 
%%% for matrices 
can be 
revealed by considering the matrices associated with the tensor-vector
products $\TP x$ and $\TP^S x$ defined as in \eqref{eq:collapse}. In fact, it is not difficult to see that the following identities hold,
\begin{align*}
   \Tau_L(\TP)  = \max_{x\in\SS}\tau_1(\TP x),  \qquad
   \Tau_R(\TP)  = \max_{x\in\SS}\tau_1(\TP^S x) , \qquad
   \Tau(\TP)  =  \max_{x\in\SS}\tau_1( \TP x + \TP^S x) \, . 
\end{align*}
The above formulas yield a characterization of the three coefficients in \eqref{eq:TauDef} which, for example,  was used in \cite{Sab19} to define $\Tau(\TP)$ in the case of $S$-symmetric tensors.
Hereafter, we exploit these formulas to derive 
explicit expressions for computing the coefficients above 
from the knowledge of the tensor entries 
 and provide the tensor equivalent of Theorem \ref{thm:formulas_for_tau_matrices}.

\begin{theorem}   \label{thm:TauL}
Let $\b P \in \R^{[3,n]}$. Then,
\begin{equation}   \label{eq:TauL}
   \Tau_L(\b P) = \frac 1 2 \max_{j,k_1,k_2}\sum_{i} |\b P_{ijk_1}-\b P_{ijk_2}|  .  
\end{equation}
Moreover, if $\b P$ is stochastic then
\begin{align}
   \Tau_L(\b P) & = 1-\min_{j,k_1,k_2}\sum_{i} 
   \min\{ \b P_{ijk_1},\b P_{ijk_2}\}  \label{eq:TauLs}   \\
   & = 1 - \min_{\II\subset[n]} \min_j \Big(
   \min_{k_1} \sum_{i\notin \II}\b P_{ijk_1} + 
   \min_{k_2} \sum_{i\in \II} \b P_{ijk_2} \Big)  \label{eq:TauLs2} . 
\end{align}
\end{theorem}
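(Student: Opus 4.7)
The starting point is the identity $\Tau_L(\b P) = \max_{x\in\SS}\tau_1(\b P x)$ already recorded above the theorem, together with the matrix formula from Theorem~\ref{thm:formulas_for_tau_matrices}. Specializing the latter to the matrix $\b P x$ defined by \eqref{eq:collapse} gives
$$
   \tau_1(\b P x) = \tfrac12 \max_{j,k}\sum_i
   \Big| \sum_{l=1}^n (\b P_{ilj} - \b P_{ilk})\, x_l \Big|.
$$
Taking the maximum over $x\in\SS$ and swapping it with the (finite) maximum over $j,k$ reduces the problem to computing, for each fixed pair $(j,k)$, the quantity $\max_{x\in\SS}\|M_{jk}x\|_1$, where $M_{jk}$ is the matrix with entries $(M_{jk})_{il} = \b P_{ilj} - \b P_{ilk}$.

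The first key step is the standard fact that $x\mapsto\|Mx\|_1$ is convex, so its maximum over the simplex $\SS$ is attained at some vertex $e_l$. Thus
$$
   \max_{x\in\SS} \|M_{jk}x\|_1 = \max_{l}\sum_i |\b P_{ilj}-\b P_{ilk}|,
$$
and after renaming $l\to j$ and $(j,k)\to(k_1,k_2)$ this yields precisely \eqref{eq:TauL}. This argument works for every $\b P\in\R^{[3,n]}$, with no stochasticity assumption.

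Under the assumption that $\b P$ is stochastic, both columns $(\b P_{ijk_1})_i$ and $(\b P_{ijk_2})_i$ are nonnegative and sum to $1$. Applying the elementary pointwise identity $|a-b| = a+b-2\min\{a,b\}$ for $a,b\ge 0$ and summing over $i$ gives
$$
   \tfrac12\sum_i |\b P_{ijk_1}-\b P_{ijk_2}| = 1 - \sum_i \min\{\b P_{ijk_1},\b P_{ijk_2}\},
$$
so maximizing the left-hand side over $j,k_1,k_2$ is equivalent to minimizing the right-hand side, proving \eqref{eq:TauLs}.

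For \eqref{eq:TauLs2}, I would use the elementary identity
$$
   \sum_i \min\{a_i,b_i\} = \min_{\II\subset [n]} \Big( \sum_{i\notin \II} a_i + \sum_{i\in \II} b_i \Big),
$$
obtained by taking $\II=\{i:b_i\le a_i\}$, applied with $a_i=\b P_{ijk_1}$ and $b_i=\b P_{ijk_2}$. Substituting into \eqref{eq:TauLs} and observing that the two sums depend on disjoint index variables $k_1,k_2$, so that the minimizations over $k_1$ and $k_2$ can be pushed independently inside each term while the minimization over $\II$ and $j$ can be freely reordered, yields \eqref{eq:TauLs2}. The only point that requires a moment of care is the swap of the maxima and the attainment of $\max_{x\in\SS}\|Mx\|_1$ at a vertex of the simplex; everything else reduces to manipulating the three elementary identities above and commuting finite $\min/\max$ operators.
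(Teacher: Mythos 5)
Your proposal is correct and follows essentially the same route as the paper: both reduce $\Tau_L$ to the matrix formula of Theorem~\ref{thm:formulas_for_tau_matrices} and then observe that the maximum over $x\in\SS$ is attained at a vertex of the simplex (you via convexity of $x\mapsto\|M_{jk}x\|_1$, the paper via the triangle inequality applied to the convex combination $\TP xy=\sum_i x_i P^{(i)}y$). The subsequent manipulations for the stochastic case, which the paper delegates to the matrix result, are the same elementary identities you spell out.
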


\begin{proof}
For $i = 1,\ldots,n$ let $P^{(i)}$ be the stochastic matrix $P^{(i)}_{jk} = \TP_{jik}$.
Hence, for any $x\in\SS$ and $y\in\SZ$ we have $\TP xy = \sum_i x_iP^{(i)}y$.
By the triangle inequality,
$$
   \|\TP xy \|_1 =
   \Big\| \sum_i x_i P^{(i)}y \Big\|_1 \leq
   \sum_i x_i \| P^{(i)}y \|_1 \leq
   \max_i \tau_1 (P^{(i)}) .
$$
On the other hand, if $x = e_i$ where $i$ is an index such that $\tau_1 (P^{(i)}) \geq \tau_1 (P^{(j)})$
for $j = 1,\ldots,n$ and $y\in\SZ$ is a vector such that $\| P^{(i)}y \|_1 = \tau_1 (P^{(i)})$ then the inequalities above are actually equalities. Thus
all the claims follow at once from Theorem
\ref{thm:formulas_for_tau_matrices}.
\end{proof}

The analogous formulas for the other higher-order coefficients in \eqref{eq:TauDef} are derived hereafter.

\begin{corollary}   \label{cor:TauR}
Let $\b P \in \R^{[3,n]}$. The following identities hold:
%\begin{enumerate}
%\item 
\begin{align}
   \Tau_R(\TP) & = 
   \frac12 \max_{j_1,j_2,k}\sum_{i=1}^n |\TP_{ij_1k} - \TP_{ij_2k}|   \label{eq:TauR} \\
   \Tau(\TP) & = 
   \frac12 \max_{j,k_1,k_2} \sum_i |\TP_{ijk_1} - \TP_{ijk_2} + \TP_{ik_1j} - \TP_{ik_2j}|     
    \label{eq:Tau}  .  
\end{align}
Moreover, if $\TP$ is stochastic then
\begin{align}
   \Tau_R(\TP) & = 1 -
   \min_{j_1,j_2,k}\sum_{i=1}^n \min\{ \TP_{ij_1k},\TP_{ij_2k}\}  \label{eq:TauRs}  \\
   & = 1 - \min_{\II\subset[n]} \min_{j_1,j_2,k} \Big( 
   \sum_{i\notin \II}\TP_{ij_1k}+ \sum_{i\in \II}\TP_{ij_2k}\Big)  
   \label{eq:TauRs2}  \\
   \Tau(\TP) & = 2 - \min_{j,k_1,k_2} \sum_i 
   \min\{\TP_{ijk_1} + \TP_{ik_1j},
   \TP_{ijk_2} + \TP_{ik_2j} \}   \label{eq:Taus}   \\
   & = 2 - \min_{\II\subset[n]} \min_j \Big(
   \min_{k_1} \sum_{i\in\II} (\TP_{ijk_1} + \TP_{ik_1j} )
   + \min_{k_2} \sum_{i\notin\II} (\TP_{ijk_2} 
   + \TP_{ik_2j}) \Big) .   \label{eq:Taus2}
\end{align}
\end{corollary}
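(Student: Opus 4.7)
The plan is to obtain each formula in the statement as a direct consequence of Theorem \ref{thm:TauL} together with the two symmetry identities stated just after the definition \eqref{eq:TauDef}, namely $\Tau_R(\TP) = \Tau_L(\TP^S)$ and $\Tau(\TP) = 2\,\Tau_L\!\left(\tfrac12\TP + \tfrac12\TP^S\right)$. First I would verify that both $\TP^S$ and $\tfrac12(\TP + \TP^S)$ inherit stochasticity from $\TP$: the former because $\sum_i(\TP^S)_{ijk} = \sum_i \TP_{ikj} = 1$ for all $j,k$, the latter because the set of stochastic tensors is convex. This legitimizes the use of \eqref{eq:TauL} and of its stochastic refinements \eqref{eq:TauLs}, \eqref{eq:TauLs2} with the argument tensor replaced by $\TP^S$ or by $\tfrac12(\TP + \TP^S)$.

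To prove \eqref{eq:TauR} and \eqref{eq:TauRs}, I apply \eqref{eq:TauL} and \eqref{eq:TauLs} to $\TP^S$: the summands $|(\TP^S)_{ijk_1} - (\TP^S)_{ijk_2}|$ and $\min\{(\TP^S)_{ijk_1},(\TP^S)_{ijk_2}\}$ become $|\TP_{ik_1 j} - \TP_{ik_2 j}|$ and $\min\{\TP_{ik_1 j},\TP_{ik_2 j}\}$ respectively, and relabeling the max indices via $j \leftrightarrow k$, $k_1 \mapsto j_1$, $k_2 \mapsto j_2$ reproduces \eqref{eq:TauR} and \eqref{eq:TauRs}. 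The formula \eqref{eq:TauRs2} follows analogously from \eqref{eq:TauLs2} by the same substitution; one only uses the elementary fact that the sum $\sum_{i\notin\II}\TP_{ij_1 k} + \sum_{i\in\II}\TP_{ij_2 k}$ has its two addends depending on disjoint index variables, so the outer $\min$ over $k$ together with independent $\min$'s over $j_1$ and $j_2$ can be merged into a joint $\min_{j_1,j_2,k}$.

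For the three identities involving $\Tau(\TP)$ the same scheme is applied to $\tfrac12(\TP + \TP^S)$, with the prefactor $2$ from the identity $\Tau(\TP) = 2\,\Tau_L\!\left(\tfrac12\TP + \tfrac12\TP^S\right)$ used to cancel the $\tfrac12$'s arising inside. Concretely, applying \eqref{eq:TauL} to $\tfrac12(\TP + \TP^S)$ yields
$$
\Tau(\TP) = \tfrac12 \max_{j,k_1,k_2} \sum_i \bigl| \TP_{ijk_1} + \TP_{ik_1 j} - \TP_{ijk_2} - \TP_{ik_2 j} \bigr|,
$$
which is \eqref{eq:Tau}; treating $\min\{\cdot,\cdot\}$ in the same way produces \eqref{eq:Taus}, and treating the set-indexed formula \eqref{eq:TauLs2} produces \eqref{eq:Taus2}.

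I do not anticipate any genuine obstacle: the proof is essentially bookkeeping since Theorem \ref{thm:TauL} already carries all the analytical content. The only care required is tracking the renaming of the three maximization indices under the $S$-transposition and ensuring that the outer factor $2$ and the inner $\tfrac12$ cancel correctly in the formulas for $\Tau(\TP)$.
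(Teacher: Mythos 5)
Your proposal is correct and follows essentially the same route as the paper: both reduce all six formulas to Theorem \ref{thm:TauL} via the identities $\Tau_R(\TP)=\Tau_L(\TP^S)$ and $\Tau(\TP)=2\,\Tau_L\bigl(\tfrac12(\TP+\TP^S)\bigr)$, after noting that stochasticity is preserved. Your index bookkeeping (including merging the independent minima over $j_1,j_2$ in \eqref{eq:TauRs2}) is accurate and in fact spelled out in more detail than in the paper's own proof.
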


\begin{proof}
Equations \eqref{eq:TauR}, \eqref{eq:TauRs} and \eqref{eq:TauRs2}
derive from the identity 
$\Tau_R(\TP) = \Tau_L(\TP^S)$ and equations \eqref{eq:TauL}, \eqref{eq:TauLs} and \eqref{eq:TauLs2}, respectively.
Now, define $\b Q = \frac12(\TP+\TP^S)$. 
Note that if $\TP$ is stochastic then also $\b Q$ is stochastic.
Since
$$
   \|\b P xy + \b P yx\|_1 = \| \b P xy + \b P^S xy\|_1 = 
   2\| \textstyle \frac12 (\b P + \b P^S) xy\|_1 = 
   2\| \b Q xy\|_1 ,
$$
we have $\Tau(\b P) = 2\Tau_L(\b Q)$.
Hence, equations \eqref{eq:Tau}, \eqref{eq:Taus} and \eqref{eq:Taus2}
derive from the latter identity 
and equations \eqref{eq:TauL}, \eqref{eq:TauLs} and \eqref{eq:TauLs2}, respectively.
\end{proof}

By \eqref{eq:TauLs}, \eqref{eq:TauRs} and \eqref{eq:Taus}, it is immediate to observe that for a stochastic tensor $\TP$ it holds
$0 \leq \Tau_L(\TP),\Tau_R(\TP)\leq 1$ and 
\begin{equation}   \label{eq:TauLR}
   0 \leq \Tau(\TP) \leq \Tau_L(\TP)+\Tau_R(\TP) \leq 2 .
\end{equation}
Stronger inequalities can be easily obtained for positive tensors,
as shown in the next result.

\begin{corollary}   \label{cor:trivial}
Let $\TP\in\R^{[3,n]}$ be a stochastic tensor. If there exists a positive number $\alpha > 0$ such that 
$\TP_{ijk} \geq \alpha$ for all $i,j,k$ then
$$
   \Tau_L(\TP) \leq 1-n\alpha , \qquad
   \Tau_L(\TP) \leq 1-n\alpha , \qquad 
   \Tau(\TP) \leq 2(1-n\alpha) .
$$
\end{corollary}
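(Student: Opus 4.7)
The plan is to derive all three bounds as immediate consequences of the explicit formulas already established in Theorem \ref{thm:TauL} and Corollary \ref{cor:TauR}. Since $\TP$ is stochastic, those formulas express each ergodicity coefficient as $1$ (or $2$) minus a minimum of a sum of entrywise minima of tensor entries, and the lower bound $\TP_{ijk} \geq \alpha$ lets us bound each such sum from below termwise.

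For $\Tau_L$, I would start from \eqref{eq:TauLs}, namely
$$
   \Tau_L(\TP) = 1 - \min_{j,k_1,k_2} \sum_{i=1}^n \min\{\TP_{ijk_1},\TP_{ijk_2}\}.
$$
The hypothesis gives $\min\{\TP_{ijk_1},\TP_{ijk_2}\} \geq \alpha$ for every triple $(i,j,k_1,k_2)$, whence $\sum_i \min\{\TP_{ijk_1},\TP_{ijk_2}\} \geq n\alpha$ uniformly in $j,k_1,k_2$. Taking the minimum over $j,k_1,k_2$ and substituting yields $\Tau_L(\TP)\leq 1-n\alpha$. The bound for $\Tau_R$ is completely analogous, starting from \eqref{eq:TauRs} (or, equivalently, applying the $\Tau_L$ bound to the $S$-transposed tensor $\TP^S$, which satisfies the same entrywise lower bound).

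For $\Tau$, I would use \eqref{eq:Taus}:
$$
   \Tau(\TP) = 2 - \min_{j,k_1,k_2} \sum_{i=1}^n
   \min\{\TP_{ijk_1}+\TP_{ik_1j},\,\TP_{ijk_2}+\TP_{ik_2j}\}.
$$
Under the hypothesis each of the two sums inside the $\min$ is at least $2\alpha$, so the $i$-th summand is at least $2\alpha$ and the whole inner sum is at least $2n\alpha$, giving $\Tau(\TP)\leq 2(1-n\alpha)$.

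There is no real obstacle here: the work has already been done in proving Theorem \ref{thm:TauL} and Corollary \ref{cor:TauR}, and the corollary is just a one-line application of those formulas. The only thing worth remarking is that the bound is informative only when $n\alpha<1$ (for $\Tau_L, \Tau_R$) or $n\alpha<1$ (for $\Tau$), which is automatically the case because stochasticity of $\TP$ forces $\sum_i \TP_{ijk}=1$ and hence $\alpha\leq 1/n$.
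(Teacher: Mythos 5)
Your proof is correct and follows exactly the paper's route: the paper likewise deduces the three bounds directly from the explicit formulas \eqref{eq:TauLs}, \eqref{eq:TauRs} and \eqref{eq:Taus}, and you have simply spelled out the termwise lower bounds that the paper leaves implicit. No gaps.
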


\begin{proof}
The three inequalities in the claim follow immediately
from equations \eqref{eq:TauLs}, \eqref{eq:TauRs} and \eqref{eq:Taus},
respectively.
\end{proof}

\begin{remark}   \label{rem:rk1}
A close look at Theorem \ref{thm:TauL}
reveals that, for any tensor $\TP\in\R^{[3,n]}$ we have 
$\Tau_L(\TP) = 0$ if and only if $\TP_{ijk} = A_{ij}$ for some matrix $A\in\R^{n\times n}$.
In particular, $\TP$ is stochastic if and only if $A$ is stochastic.
Analogously, from Corollary \ref{cor:TauR}
we derive that $\Tau_R(\TP) = 0$ if and only if $\TP_{ijk} = A_{ik}$ for some matrix $A$.
Consequently, $\Tau_L(\TP) +\Tau_R(\TP) = 0$ if and only if $\TP_{ijk} = v_{i}$ 
for some vector $v$. 
It is not difficult to prove that the latter is also equivalent to $\Tau(\TP) = 0$.
Hence, if $\TP$ is nonzero, we have 
$$
   \Tau_L(\TP) +\Tau_R(\TP) = 0 \quad \Longleftrightarrow \quad 
   \Tau(\TP) = 0 \quad \Longleftrightarrow \quad 
   \mathrm{rank}(\TP) = 1 .
$$
In the matrix case, a coefficient of ergodicity $\tau$
is called % \giallo{\textit{proper}} 
proper when the identity $\tau(P) = 0$ for a stochastic matrix $P$ is equivalent to the condition $\mathrm{rank}(P) = 1$,
see \cite{Ergodic,SenetaBook}. For example, both the 
Birkhoff coefficient $\tau_H$
and all the norm-based ergodicity coefficients $\tau_p$ are proper. By extending that definition to the tensor case, we can say that $\Tau$ is proper,
while $\Tau_L$ and $\Tau_R$ are not proper. 
\end{remark}

The remark above shows that Property 3 of Theorem \ref{thm:tau_p_properties} carries over to the higher-order setting. In the next Subsection \ref{sec:bounding} we show that also Properties 1 and 2 of that theorem enjoy a tensor counterpart. In Subsection \ref{sec:perturbation}, instead, we show how the perturbation result of Theorem \ref{thm:pert} transfers to stochastic tensors.

% --------------------------------------------------------------
\subsection{Bounding the variation of higher-order  coefficients}\label{sec:bounding}
When working with stochastic tensors, it is quite natural to endow $\R^{[3,n]}$ with the norm
$$
   \|\TP\|_1  = 
   \max_{\|x\|_1 = \|y\|_1 = 1}\|\TP xy\|_1 =  \max_{j,k} \sum_i |\TP_{ijk}|\, ,
$$
so that, if $\TP$ is stochastic, we have $\|\TP\|_1 = 1$. 

With the next theorem we prove a Lipschitz-continuity condition for the higher-order ergodicity coefficients with respect to the tensor $1$-norm above.
\begin{theorem}
For arbitrary $\TP, \b Q\in\R^{[3,n]}$ we have
$$
   |\Tau_*(\TP) - \Tau_*(\b Q)| \leq \Tau_*(\TP - \b Q) \leq \|\TP - \b Q\|_1 
$$
where $\Tau_*$ is any of $\Tau_L$ or $\Tau_R$. Moreover,
$$
   |\Tau(\TP) - \Tau(\b Q)| \leq \Tau(\TP - \b Q) \leq 2 \|\TP - \b Q\|_1 .
$$
\end{theorem}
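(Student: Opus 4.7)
The plan is to show that each of $\Tau_L$, $\Tau_R$, and $\Tau$ is a seminorm on $\R^{[3,n]}$ (in particular, subadditive under tensor addition), and that each is dominated by the tensor $1$-norm. The left-hand inequalities then follow from a standard reverse triangle inequality argument, while the right-hand inequalities follow by observing that the admissible set in the definitions is contained in the unit $\ell^1$-ball.

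First I would establish subadditivity of $\Tau_L$. For arbitrary $\b R_1, \b R_2\in\R^{[3,n]}$ and any $x\in\SS$, $y\in\SZ$, the triangle inequality for $\|\cdot\|_1$ gives
$$
   \|(\b R_1+\b R_2)xy\|_1 \leq \|\b R_1 xy\|_1 + \|\b R_2 xy\|_1 \leq \Tau_L(\b R_1)+\Tau_L(\b R_2) .
$$
Taking the supremum over $x,y$ yields $\Tau_L(\b R_1+\b R_2)\leq \Tau_L(\b R_1)+\Tau_L(\b R_2)$. Substituting $\b R_1=\b Q$, $\b R_2=\TP-\b Q$ gives $\Tau_L(\TP)-\Tau_L(\b Q)\leq \Tau_L(\TP-\b Q)$; swapping $\TP$ and $\b Q$ yields the symmetric bound, and the two combine into $|\Tau_L(\TP)-\Tau_L(\b Q)|\leq \Tau_L(\TP-\b Q)$. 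The very same argument, with $\b P yx$ in place of $\b P xy$, proves the analogous inequality for $\Tau_R$. For $\Tau$, the triangle inequality applied to $\|(\TP+\b Q)xy+(\TP+\b Q)yx\|_1$ again gives subadditivity, $\Tau(\TP+\b Q)\leq \Tau(\TP)+\Tau(\b Q)$, and the same two-sided trick produces $|\Tau(\TP)-\Tau(\b Q)|\leq \Tau(\TP-\b Q)$.

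For the right-hand inequalities, I would use the fact that any $y\in\SZ$ satisfies $\|y\|_1=1$ (without any sign constraint), and any $x\in\SS$ satisfies $\|x\|_1=1$. Therefore, for every $\b R\in\R^{[3,n]}$,
$$
   \Tau_L(\b R)=\max_{x\in\SS,\,y\in\SZ}\|\b R xy\|_1 \leq \max_{\|x\|_1=\|y\|_1=1}\|\b R xy\|_1 = \|\b R\|_1,
$$
and likewise $\Tau_R(\b R)\leq \|\b R\|_1$. Applying these with $\b R=\TP-\b Q$ finishes the bounds for $\Tau_L$ and $\Tau_R$. For $\Tau$, the inequality $\Tau(\b R)\leq \Tau_L(\b R)+\Tau_R(\b R)$ noted in \eqref{eq:TauLR} rests only on the triangle inequality (stochasticity is not needed for that half), so it holds for arbitrary $\b R$, and hence $\Tau(\TP-\b Q)\leq 2\|\TP-\b Q\|_1$.

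There is no real obstacle in this argument: everything reduces to the subadditivity of $\|\cdot\|_1$ and to the containment $\SZ\subset\{y:\|y\|_1=1\}$. The only thing worth being careful about is re-verifying that the elementary inequality $\Tau(\b R)\leq \Tau_L(\b R)+\Tau_R(\b R)$ does not require $\b R$ to be stochastic, which it does not, since it is proved by a pointwise triangle inequality prior to taking the supremum.
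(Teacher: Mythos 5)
Your proposal is correct and follows essentially the same route as the paper: the paper applies the triangle inequality at a maximizing pair $(x,y)$ to get $\Tau_*(\TP)\leq \Tau_*(\TP-\b Q)+\Tau_*(\b Q)$ and then swaps roles, which is exactly your subadditivity-plus-reverse-triangle-inequality argument in slightly different packaging. Your treatment of the right-hand bounds (which the paper dismisses as immediate from the definitions), including the observation that $\Tau(\b R)\leq \Tau_L(\b R)+\Tau_R(\b R)$ needs no stochasticity, is a correct and welcome elaboration of the same idea.
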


\begin{proof}
Let $\Tau_* = \Tau_L$, the other case being completely analogous. Suppose
that $\Tau_L(\TP) \geq \Tau_L(\b Q)$. Hence,
for some $x\in\SS$ and $y\in\SZ$ we have
$$
   \Tau_L(\TP) = \| \TP xy \|_1 \leq 
   \| (\TP - \b Q)  xy \|_1 + \| \b Q xy \|_1
   \leq 
   \Tau_L(\TP - \b Q) + \Tau_L(\b Q) .
%  \| (\TP - \b Q) \|_1 + \Tau_L(\b Q) .
$$
Hence, $\Tau_L(\TP) - \Tau_L(\b Q) \leq \Tau_L(\TP - \b Q)$. 
By reversing the roles of $\TP$ and $\b Q$ we obtain 
$\Tau_L(\b Q) - \Tau_L(\TP) \leq \Tau_L(\TP - \b Q)$ 
and we arrive at the first claim. 
Analogously, for some $x\in\SS$ and $y\in\SZ$ we have
\begin{align*}
   \Tau(\TP) = \| \TP xy + \TP yx \|_1 & \leq 
   \| (\TP - \b Q)  xy + (\TP - \b Q)  yx \|_1 + \| \b Q xy + \b Q yx \|_1  \\
   & \leq 
   \Tau(\TP - \b Q) + \Tau(\b Q) .
%  \| (\TP - \b Q) \|_1 + \Tau_L(\b Q) .
\end{align*}
The inequality $\Tau(\b Q) - \Tau(\TP) \leq \Tau(\TP - \b Q)$ follows from the preceding one 
by exchanging $\TP$ and $\b Q$, and the second claim follows.
The rightmost inequalities follow immediately from the definition 
of the ergodicity coefficients.
\end{proof}

% --------------------------------------------------------------

\section{Second-order stochastic processes and $Z$-eigenvectors}    
\label{sec:Z-eig}

In this section we prove an analogous of Theorem 
\ref{thm:ergodic_thm_matrices}
for tensor $Z$-eigenvectors. 
Precisely, given $\b P$ stochastic, we provide a new condition that ensures the existence and uniqueness  of a positive vector $x\in\SS$ such that $x=\b P xx$. 
Moreover, we show that under the same condition the 
higher-order power method
$x_{t+1} = \b P x_t x_t$, which is the prototypical 
nonlinear Markov process
\cite{kolokoltsov2010nonlinear},
always converges to $x$ and we provide an analogous, but stronger, condition that guarantees the global convergence of the alternate scheme $x_{t+1} = \b P x_t x_{t-1}$.

The next theorem provides the tensor analogous of Theorem \ref{thm:ergodic_thm_matrices}.

\begin{theorem}   \label{thm:ergodic_thm_tensor} 
If $\TP$ is  stochastic, then  $\Tau(\TP)$ is the best Lipschitz constant of the quadratic map $f:\SS\mapsto\SS$ given by $f(x) = \TP xx$, that is,
$$
   \Tau(\TP) =\tau_1(
   % x\mapsto \TP xx
   f) =
   \sup_{x,y\in \SS}\frac{\|\TP xx-\TP yy\|_1}{\|x-y\|_1}\, .
$$
Therefore, if  $\Tau(\TP)<1$ then  %$\TP$ is $Z$-ergodic, i.e., 
there exists a unique $Z$-eigenvector $x\in \SS$ such  that $\TP xx=x$. Moreover, the higher-order power method  $x_{t+1}=\TP x_tx_t$  converges to $x$ for any $x_0\in\SS$, and
$$
   \|x_t - x \|_1 \leq \Tau(\TP)^t \|x_0 - x \|_1 .
$$
\end{theorem}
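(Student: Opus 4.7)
The plan is to first establish the Lipschitz identity $\tau_1(f)=\Tau(\TP)$ by a symmetric change of variables, and then conclude existence, uniqueness, and geometric convergence via the Banach contraction principle on the complete metric space $(\SS, \|\cdot\|_1)$. The key algebraic identity is the parallelogram-style decomposition: for $x,y\in\SS$, set $u=(x+y)/2\in\SS$ and $v=(x-y)/2$, so that $\uno^T v=0$. Expanding $\TP xx$ and $\TP yy$ with $x=u+v$, $y=u-v$ and using bilinearity in the last two arguments, the terms $\TP uu$ and $\TP vv$ cancel, yielding
\begin{equation*}
   \TP xx - \TP yy = 2\bigl(\TP uv + \TP vu\bigr), \qquad \|x-y\|_1 = 2\|v\|_1.
\end{equation*}

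If $x\neq y$ then $v\neq 0$ and $\hat v := v/\|v\|_1 \in \SZ$, so by homogeneity of $\TP$ in the second and third slots,
\begin{equation*}
   \frac{\|\TP xx - \TP yy\|_1}{\|x-y\|_1} = \|\TP u\hat v + \TP \hat v u\|_1 \leq \Tau(\TP),
\end{equation*}
which gives $\tau_1(f) \leq \Tau(\TP)$. For the reverse inequality I would take any $u\in\SS$ and $\hat v\in\SZ$, and for $u$ in the relative interior of $\SS$ (all entries strictly positive) pick $t>0$ small enough that $x=u+t\hat v$ and $y=u-t\hat v$ both lie in $\SS$; then the ratio above equals $\|\TP u\hat v+\TP\hat v u\|_1$ exactly. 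A maximizing pair $(u^\star,\hat v^\star)$ for $\Tau(\TP)$ exists by compactness of $\SS\times\SZ$ and continuity; if $u^\star$ sits on the boundary of $\SS$, I would approximate by $u_\varepsilon=(1-\varepsilon)u^\star+\varepsilon\uno/n$ and pass to the limit $\varepsilon\to 0$, which is legitimate since $\TP u\hat v+\TP \hat v u$ is continuous in $u$. This establishes $\tau_1(f)=\Tau(\TP)$.

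Granted the identity, the rest is routine. Because $\TP$ is stochastic, $f(\SS)\subseteq\SS$: positivity is clear, and $\uno^T \TP xx = \sum_{ijk}\TP_{ijk}x_jx_k = \sum_{jk}x_jx_k = 1$ for $x\in\SS$. The set $\SS$ is closed and bounded in $\R^n$, hence complete under $\|\cdot\|_1$. Assuming $\Tau(\TP)<1$, the map $f$ is a strict contraction on this complete metric space, so Banach's fixed point theorem gives a unique $x\in\SS$ with $\TP xx=x$ and the iteration $x_{t+1}=\TP x_tx_t$ converges to $x$ from every $x_0\in\SS$; the contraction bound $\|f(x_t)-f(x)\|_1\leq \Tau(\TP)\|x_t-x\|_1$ iterates to $\|x_t-x\|_1\leq\Tau(\TP)^t\|x_0-x\|_1$.

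The main obstacle is the boundary-approximation step in the proof that $\tau_1(f)\geq \Tau(\TP)$: one must ensure that the pair $(x,y)=(u\pm t\hat v)$ stays in $\SS$ while $u$ approaches a boundary maximizer. Everything else reduces to the bilinearity of $\TP$ in its last two slots, the symmetric role of $v$ and $-v$, and a direct application of Banach's theorem.
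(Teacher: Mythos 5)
Your proof is correct and follows essentially the same route as the paper: your identity $\TP xx - \TP yy = 2(\TP uv + \TP vu)$ with $u=(x+y)/2$, $v=(x-y)/2$ is exactly the paper's factorization $f(x)-f(y)=\b Q(x+y)(x-y)$ for the symmetrized tensor $\b Q=\frac12(\TP+\TP^S)$, followed by the Banach fixed point theorem. If anything, you are more careful than the paper on the reverse inequality $\tau_1(f)\geq\Tau(\TP)$, where the paper silently equates the supremum over pairs $x,y\in\SS$ with the maximum over all $(v,w)\in\SS\times\SZ$; your interior-point approximation of a boundary maximizer makes that step explicit.
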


\begin{proof}
Let $f:\SS\mapsto\SS$ be given by $f(x) = \TP xx$.
Let $\b Q = \frac12(\TP + \TP^S)$. Note that $\b Q$ is a stochastic tensor such that $\b Q = \b Q^S$. Moreover, 
the equation $f(x) = x$ is equivalent to $\b Qxx = x$.
Then, for all $x,y\in\SS$ we have
\begin{align*}
   f(x) - f(y) & = \b Q xx - \b Q yy + \b Q xy - \b Q xy \\
   & = \b Q xx - \b Q yy + \b Q yx - \b Q xy 
   = \b Q (x + y)(x - y) .
\end{align*}
Hence, 
\begin{align*}
   \tau_1(f) & = \max_{x,y\in\SS} \frac{\|f(x) - f(y)\|_1}{\|x-y\|_1}  = \max_{x,y\in\SS} \frac{ 2 \| \b Q (\frac12 x + \frac12 y)(x - y) \|_1}{\| x - y \|_1}   \\
   & = \max_{v\in\SS} \max_{w\in\SZ} 2 \|\b Q vw \|_1 
   =  2\, \Tau_L(\b Q) .
\end{align*}
Since $2\Tau_L(\b Q) = \Tau(\TP)$, 
we obtain the first part of the claim. In particular, we get
$\|f(x) - f(y)\|_1 \leq \Tau(\TP) \|x-y\|_1$ for any $x,y\in\SS$.
Hence, if $\Tau(\TP) < 1$ then
% which shows that 
$f$ is contractive with respect to the $1$-norm. By the Banach fixed point theorem, there exists a unique fixed point $x\in\SS$ such that
$x = f(x)$. Moreover, the iteration $x_{t+1} = f(x_t)$ converges to $x$
with $\|x_t - x \|_1 \leq \Tau(\TP)\|x_{t-1} - x \|$ 
for any $x_0\in\SS$ and 
the proof is complete.
\end{proof}

We note in passing
that the following result, which has been derived from a well-known uniqueness result in the fixed point theory
% \cite{Kel76} 
several times by different authors
\cite{cui2019uniqueness,li2017uniqueness,LiNg}, is a direct consequence of the theorem above.

\begin{corollary}   \label{cor:chinese}
If $\TP\in\R^{[3,n]}$ is a stochastic tensor such that
$\TP_{ijk} > 1/(2n)$ for all $i,j,k$, then 
there exists a unique $Z$-eigenvector $x\in\SS$ such that $\TP xx = x$ and the higher-order power method $x_{t+1} = \TP x_t x_t$ converges to $x$ for any $x_0 \in\SS$.
\end{corollary}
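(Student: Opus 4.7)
The plan is to reduce the corollary directly to Theorem \ref{thm:ergodic_thm_tensor} by showing that the entrywise lower bound $\TP_{ijk} > 1/(2n)$ forces $\Tau(\TP) < 1$. Once this strict inequality on the ergodicity coefficient is established, existence and uniqueness of the $Z$-eigenvector $x \in \SS$ with $\TP xx = x$, as well as convergence of the higher-order power method from any starting point in $\SS$, follow immediately from Theorem \ref{thm:ergodic_thm_tensor}.

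To produce the strict inequality $\Tau(\TP) < 1$, I would appeal to Corollary \ref{cor:trivial}. Since the entries of $\TP$ are finitely many and all strictly exceed $1/(2n)$, we may set $\alpha = \min_{i,j,k}\TP_{ijk}$, and this minimum satisfies $\alpha > 1/(2n)$. Corollary \ref{cor:trivial} then gives
$$
\Tau(\TP) \leq 2(1 - n\alpha) < 2\bigl(1 - n \cdot \tfrac{1}{2n}\bigr) = 1,
$$
which is exactly the hypothesis of Theorem \ref{thm:ergodic_thm_tensor}. An equivalent, slightly more hands-on route is to use formula \eqref{eq:Taus} directly: the strict bound $\TP_{ijk} > 1/(2n)$ implies $\TP_{ijk_1} + \TP_{ik_1j} > 1/n$ and $\TP_{ijk_2} + \TP_{ik_2j} > 1/n$ for all indices, so the sum over $i$ of the minimum of these two quantities strictly exceeds $1$, yielding $\Tau(\TP) < 1$.

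There is essentially no obstacle here; the work has already been done in assembling the explicit formulas of Corollary \ref{cor:TauR} and in the contraction argument of Theorem \ref{thm:ergodic_thm_tensor}. The only subtlety worth flagging is the passage from the strict pointwise inequality $\TP_{ijk} > 1/(2n)$ to a uniform lower bound $\alpha > 1/(2n)$ that can be fed into Corollary \ref{cor:trivial}; this step relies solely on the finiteness of the index set. Once that is observed, the corollary is a one-line consequence of the machinery developed in Section \ref{sec:HO_erg_coeff} together with Theorem \ref{thm:ergodic_thm_tensor}.
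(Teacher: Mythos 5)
Your proposal is correct and follows exactly the paper's own argument: apply Corollary \ref{cor:trivial} with $\alpha=\min_{i,j,k}\TP_{ijk}>1/(2n)$ to get $\Tau(\TP)<1$, then invoke Theorem \ref{thm:ergodic_thm_tensor}. The extra remarks about finiteness of the index set and the alternative route via \eqref{eq:Taus} are fine but not needed.
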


\begin{proof}
In the stated hypotheses we have 
$\Tau(\TP) < 1$ 
by virtue of Corollary \ref{cor:trivial}.
Hence, the claim is a direct consequence of Theorem \ref{thm:ergodic_thm_tensor}.
\end{proof}

Given a stochastic tensor $\b P$ and two initial points $x_0,x_{-1}\in \SS$, the following alternate higher-order power method has been considered 
in \cite{HuQi14}:
\begin{equation}\label{eq:PM2}
    x_{t+1} = \TP x_t x_{t-1}, \qquad t=0,1,2,\dots
\end{equation}
Note that this coincides with the second-order stochastic process described in \eqref{eq:2omc}.
% The relationship between this iteration and the second order stochastic process \eqref{eq:2omc} is apparent.
In \cite{HuQi14} the convergence of \eqref{eq:PM2}
has been proven when
$\TP_{ijk} > 1/(2n)$ and under restrictive hypotheses on the choice of $x_0$ and $x_{-1}$.
The following theorem provides a condition in terms of $\Tau_L(\b P)$ and $\Tau_R(\b P)$ that  ensures that \eqref{eq:PM2} converges globally to the unique fixed point of $\b P$.

\begin{theorem}\label{thm:PM2}
Let $\b P$ be a stochastic tensor
and let $s = \Tau_L(\b P) + \Tau_R(\b P)$. If 
$s < 1$ 
then the iteration \eqref{eq:PM2} converges to the unique
$Z$-eigenvector $x\in \SS$ such that $x=\b Pxx$.
In fact, for all $x_0,x_{-1}\in \SS$ 
and $t = 0,1,\ldots$ it holds
$$
   \|x_{t+1}-x\|_1 \leq s^{\lceil (t+1)/2 \rceil} 
   \max \{ \|x_0 - x\|_1 , \|x_{-1} - x\|_1 \}.
$$
\end{theorem}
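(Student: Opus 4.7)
The plan is to first obtain existence and uniqueness of the fixed point, and then to set up a linear recurrence for the error $a_t = \|x_t - x\|_1$ that can be analyzed by a two-step induction.

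For existence and uniqueness, I would observe that by \eqref{eq:TauLR} we have $\Tau(\b P) \leq \Tau_L(\b P) + \Tau_R(\b P) = s < 1$. Theorem \ref{thm:ergodic_thm_tensor} then yields a unique $x\in\SS$ with $\b Pxx = x$.

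The core of the argument is to split the error $\b P x_t x_{t-1} - \b P x x$ by inserting $\pm \b P x x_{t-1}$, obtaining
\begin{equation*}
    x_{t+1} - x = \b P x_t x_{t-1} - \b P x x = \b P(x_t - x) x_{t-1} + \b P x (x_{t-1} - x).
\end{equation*}
Since $x, x_{t-1} \in \SS$ and $(x_t - x)/\|x_t - x\|_1, (x_{t-1} - x)/\|x_{t-1} - x\|_1 \in \SZ$ (assuming nonzero, else trivial), the definitions \eqref{eq:TauDef} of $\Tau_L$ and $\Tau_R$ give
\begin{equation*}
   \|\b P (x_t - x) x_{t-1}\|_1 \leq \Tau_R(\b P)\, \|x_t - x\|_1 ,
   \qquad
   \|\b P x (x_{t-1} - x)\|_1 \leq \Tau_L(\b P)\, \|x_{t-1} - x\|_1 .
\end{equation*}
Hence, setting $\alpha = \Tau_R(\b P)$, $\beta = \Tau_L(\b P)$ and $a_t = \|x_t - x\|_1$, I would arrive at the linear recurrence $a_{t+1} \leq \alpha a_t + \beta a_{t-1}$, with $\alpha + \beta = s < 1$.

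Finally, writing $M = \max\{a_0, a_{-1}\}$, I would prove by induction on $k \geq 0$ that both $a_{2k-1} \leq s^k M$ and $a_{2k} \leq s^k M$. The base case $k=0$ is immediate from the definition of $M$. For the inductive step, using $a_{2k-1}, a_{2k} \leq s^k M$ one gets
\begin{equation*}
   a_{2k+1} \leq \alpha a_{2k} + \beta a_{2k-1} \leq (\alpha+\beta) s^k M = s^{k+1} M ,
\end{equation*}
and then
\begin{equation*}
   a_{2k+2} \leq \alpha a_{2k+1} + \beta a_{2k} \leq (\alpha s + \beta) s^k M \leq s^{k+1} M ,
\end{equation*}
where the last step uses $\alpha s + \beta \leq \alpha + \beta = s$, which holds because $s < 1$. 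Re-indexing this pair of bounds as $a_{t+1} \leq s^{\lceil (t+1)/2 \rceil} M$ gives the claimed estimate, and in particular $a_t \to 0$. The only delicate step is the tight two-step telescoping: a naive one-step argument using $s$ as Lipschitz constant would not suffice, since neither $\alpha$ nor $\beta$ need be less than $1$ individually; pairing consecutive iterates is what makes the constant $s < 1$ appear in the contraction rate.
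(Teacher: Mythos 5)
Your proposal is correct and follows essentially the same route as the paper: uniqueness via $\Tau(\b P)\leq \Tau_L(\b P)+\Tau_R(\b P)<1$ and Theorem \ref{thm:ergodic_thm_tensor}, a telescoping split of $\b Px_tx_{t-1}-\b Pxx$ into two terms bounded by $\Tau_R(\b P)\|x_t-x\|_1$ and $\Tau_L(\b P)\|x_{t-1}-x\|_1$, and a two-step induction giving the rate $s^{\lceil (t+1)/2\rceil}$. The only (immaterial) difference is the intermediate term inserted in the split ($\b Pxx_{t-1}$ instead of the paper's $\b Px_tx$), which yields the same recurrence.
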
 

\begin{proof}
First notice that the assumption $\Tau_L(\b P) + \Tau_R(\b P)<1$ implies $\Tau(\b P)<1$, thus, by Theorem \ref{thm:ergodic_thm_tensor}, there exists a unique positive $x\in \SS$ such that $x=\b Pxx$. We have
\begin{align*}
   x_{t+1}-x = \b Px_t x_{t-1}-\b Pxx 
   & = \b Px_t x_{t-1}-\b Px_t x+\b Px_t x-\b Pxx \\
   & = \b Px_t (x_{t-1}-x)+\b P(x_t-x)x .
\end{align*} 
Thus, for any $t \geq 0$, 
\begin{align*}
   \|x_{t+1}-x\|_1 &\leq \Tau_L(\b P)\|x_{t-1}-x\|_1+\Tau_R(\b P) \|x_t-x\|_1 \\
   &\leq \big(\Tau_L(\b P) + \Tau_R(\b P)\big) \max\{\|x_{t}-x\|_1,\|x_{t-1}-x\|_1 \} .
%   &\leq \big(\Tau_L(\b P) + \Tau_R(\b P)\big)^k \max_{i=0,\dots,k} \|x_i-x\|_1\, .
\end{align*}
In particular, the claim is true for $t=0$.
The proof is completed by a simple inductive argument.
Indeed, 
let $m = \max \{ \|x_0 - x\|_1 , \|x_{-1} - x\|_1 \}$
and $\epsilon_t = \|x_t-x\|_1$
to simplify notations. 
For $t > 0$, suppose the claim true up to $t-1$. Then,
$$
   \epsilon_{t+1} \leq s\max\{\epsilon_t ,\epsilon_{t-1}\}
   \leq s\, \max\{s^{\lceil t/2 \rceil}, s^{\lceil (t-1)/2 \rceil}\}\, m
   =  s^{\lceil (t+1)/2 \rceil}\,  m ,
$$
and the theorem is proved.
\end{proof}

% --------------------------------------------------------------------
\subsection{A perturbation result for the stochastic $Z$-eigenvector} \label{sec:perturbation}
% --------------------------------------------------------------------

A fundamental perturbation analysis problem is to obtain quality bounds on the variation of the ergodic distribution of the  non-negative stochastic tensor $\TP$, when $\TP$ is perturbed.
The following result provides a bound in terms of the higher-order norm-based ergodicity coefficients, and represents a tensor counterpart of Theorem \ref{thm:pert}.

\begin{theorem}   \label{thm:cond3}
Let $\TP$ and its perturbation $\TP'$ be two stochastic tensors in $\R^{[3,m]}$.
If $\Tau(\TP) < 1$ then the stochastic solution of $x = \TP xx$ is unique, and for 
any stochastic vector $x'$ such that $x' = \TP' x'x'$ it holds
$$
   \|x-x'\|_1 \leq 
   \frac{\|\TP - \TP'\|_1}{1 - \Tau(\TP)} .
$$
\end{theorem}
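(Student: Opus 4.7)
The plan is to combine two ingredients that are already in hand: the Lipschitz-constant interpretation of $\Tau(\TP)$ from Theorem \ref{thm:ergodic_thm_tensor}, and a direct estimate of the tensor difference $\TP - \TP'$ applied to stochastic vectors in the $1$-norm. Uniqueness of $x$ is immediate from Theorem \ref{thm:ergodic_thm_tensor}, so the entire work lies in proving the displayed inequality.

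First I would use the two fixed-point identities $x = \TP xx$ and $x' = \TP' x'x'$ to write the classical ``add and subtract'' telescoping:
$$
   x - x' = \TP xx - \TP'x'x' = (\TP xx - \TP x'x') + (\TP - \TP')x'x' .
$$
The first parenthesized term is exactly the difference $f(x) - f(x')$ for the quadratic map $f(v) = \TP vv$ studied in Theorem \ref{thm:ergodic_thm_tensor}, so it is bounded in the $1$-norm by $\Tau(\TP)\,\|x - x'\|_1$. For the second term, the operator norm bound
$$
   \|(\TP - \TP')x'x'\|_1 \leq \|\TP - \TP'\|_1\,\|x'\|_1\,\|x'\|_1 = \|\TP - \TP'\|_1
$$
follows directly from the definition of $\|\cdot\|_1$ on $\R^{[3,n]}$ recalled in Section \ref{sec:bounding}, together with the fact that $x'\in\SS$ has unit $1$-norm.

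Taking $1$-norms of the telescoped identity and combining the two estimates yields
$$
   \|x - x'\|_1 \leq \Tau(\TP)\,\|x - x'\|_1 + \|\TP - \TP'\|_1 ,
$$
and rearranging (using $\Tau(\TP) < 1$ to divide) gives exactly the claimed bound.

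There is no real obstacle here: the proof is a verbatim tensor analogue of the matrix argument behind Theorem \ref{thm:pert}. The only point that required a nontrivial result from the paper is the Lipschitz estimate $\|\TP xx - \TP x'x'\|_1 \leq \Tau(\TP)\|x - x'\|_1$, which is precisely what Theorem \ref{thm:ergodic_thm_tensor} supplies; everything else is a one-line operator-norm bound and elementary algebra.
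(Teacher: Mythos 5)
Your proof is correct and follows essentially the same route as the paper: both add and subtract $\TP x'x'$, bound the resulting quadratic difference by $\Tau(\TP)\|x-x'\|_1$, bound the residual term by $\|\TP-\TP'\|_1$ via the tensor operator norm, and rearrange using $\Tau(\TP)<1$. The only difference is that you cite the Lipschitz identity $\Tau(\TP)=\tau_1(f)$ from Theorem \ref{thm:ergodic_thm_tensor} directly, whereas the paper re-derives that bound inline by passing to the $S$-symmetrization $\frac12(\TP+\TP^S)$; your shortcut is legitimate and slightly cleaner.
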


\begin{proof}
Suppose first that both $\TP$ and $\TP'$ are $S$-symmetric. By adding and subtracting $\TP x'x'$ we have
\begin{align*}
    \|x - x'\|_1 & = \|\TP xx - \TP' x'x' + \TP x'x' - \TP x'x'\|_1 \\
    & \leq \|\TP x(x - x') + \TP (x-x')x'\|_1 
      + \|(\TP - \TP') x'x'\|_1 \\
    & = 2 \| \TP (\textstyle \frac12 x + \frac12 x')(x - x')\|_1 
      + \|(\TP - \TP') x'x'\|_1 \\
    & \leq 2\Tau_L(\TP) \| x - x' \|_1 + \|\TP - \TP'\|_1  = \Tau(\TP) \| x - x' \|_1 + \|\TP - \TP'\|_1 .
\end{align*} 
Rearranging terms we find $\|x-x'\|_1(1-\Tau(\TP))\leq \|\TP - \TP'\|_1$ and the claim follows.

In the general case, define $\b Q = \frac12(\TP+\TP^S)$ and $\b Q' = \frac12(\TP'+\TP'^S)$
and repeat the previous arguments. Finally, note that $\Tau(\b Q) = \Tau(\TP)$ and $\|\b Q - \b Q'\|_1 \leq \|\TP - \TP'\|_1$.
\end{proof}

% -------------------------------------------------------
\subsection{Convergence of a class of vertex reinforced random walks}\label{sec:spacey}
% -------------------------------------------------------

Vertex reinforced random walks are another important example of higher-order discrete-time stochastic process $\{X(t)\}_t$ on the state space $\{1, \dots, n\}$,  
where the state transitions at time $t$ depend on the whole 
history $X(0),\ldots,X(t-1)$ \cite{Ben97,Pemantle92}. 
Starting from an initial state $X(0) \in\{1,\ldots,n\}$, 
the process evolves according to the formulas
\begin{align}   \label{eq:vertex_reinforced}
\begin{aligned}
%   X(0) & = \alpha_0\\
    \mathbb P(X(t+1)=i|\mathcal F_t) & = \mathcal M(y_t)_{i,X(t)} \\
     (y_t)_j & = \frac{1+\sum_{i=1}^t [X(i)=j]}{t+n} ,   
\end{aligned}
\end{align}
where $\mathcal F_t$ is the $\sigma$-field generated by $X(1),\dots, X(t)$, 
% $\alpha_0$ is the initial state 
and $\mathcal M$ is a map from 
% the set of discrete distributions $\{y: y\geq 0, \sum_i y_i=1\}$ 
$\SS$ 
to the set of stochastic $n\times n$ matrices. 
The vector $y_t$, which is called the occupation vector,  
is an auxiliary stochastic vector that is introduced in order to record the history of the process.
Indeed, the $i$-th entry of $y_t$ is proportional to the number of times the process visited state $i$ up to the $t$-th time step, plus one. 
Now, let $x_t$ be the probability vector of $X(t)$, that is,
the $n$-vector whose $i$-th entry is $\mathbb{P}(X(t)=i)$.
Then, the process \eqref{eq:vertex_reinforced} can be equivalently described via the  coupled equations
$$
   x_{t+1} = \mathcal M(y_t)x_t , \qquad 
   y_{t} = \frac{1}{t+n} \sum_{s=1}^t x_s 
   + \frac{1}{t+n}\uno .
$$
When  $\mathcal M$ is linear, there exists a stochastic tensor $\TP$ such that $\mathcal M(v)_{ij} = \sum_k \TP_{ijk}v_k$ and  
the corresponding stochastic process is
the so-called spacey random walk, introduced in \cite{BGL17}. In this case,  with minor notation changes with respect to the original version, the previous iteration can be recast as
\begin{equation}   \label{eq:spacey}  
   \begin{cases}
   x_{t+1} = \TP x_ty_t  &  \\
   y_{t+1} = \frac{1}{t+1} x_t + \frac{t}{t+1} y_t .
   \end{cases}
\end{equation}
On the basis of key results by Bena\"im \cite{Ben97}, 
Benson, Gleich and Lim established the convergence of the spacey random walk
in terms of the convergence of a certain ordinary differential equation to a stable equilibrium,
and one auxiliary condition placed on $\TP$
\cite[Thm.\ 9]{BGL17}.
However, only the convergence of the 
occupation vectors $\{y_t\}$ %%% in \eqref{eq:spacey}
(which corresponds to the convergence in the Ces\`aro average sense of the random variables $X(t)$)
%%% to a stochastic $Z$-eigenvector of $\TP$ 
can be derived from the results in \cite{Ben97,BGL17}.
In fact, the second equation in \eqref{eq:spacey} yields
$$
   y_{t+1} - y_t = \frac{1}{t+1} ( x_{t} - y_t  ) .
$$
Hence, even if the sequence $\{y_t\}$ has a limit and the left hand side converges to zero, that does not imply the convergence 
of the sequence $\{x_t\}$.

In what follows, we consider the following generalization
of \eqref{eq:spacey},
\begin{align}   \label{eq:vrrw}
\begin{cases}
  x_{t+1} = \TP x_ty_t & \\ 
  y_{t+1} = c_t x_t + (1-c_t) y_t  &
\end{cases}
\end{align}
with $c_t\in[0,1]$
and we show in the next theorem that, if the higher-order ergodicity coefficients $\Tau_L(\TP)$ and $\Tau_R(\TP)$ are small enough, then the stochastic process \eqref{eq:vrrw} is globally convergent, provided that the sequence $\{c_t\}$ is not too small. This requirement on $\{c_t\}$ can be seen as a  condition that avoids the process from freezing along the way on a 
limit point that is far away from the $Z$-eigenvector of $\TP$. 
In fact, the possibility of such a  behavior has been shown in \cite{bouguet2018fluctuations}
for a stochastic process closely related to \eqref{eq:vrrw}.

\begin{theorem}\label{thm:vrrw}
Let the sequence $\{c_t\}$ in \eqref{eq:vrrw} 
be non-increasing and such that
\begin{equation}   \label{eq:c_t}
   \sum_{t=1}^{\infty} c_t = +\infty .  
\end{equation}
If $\Tau_L(\TP) + \Tau_R(\TP) < 1$, then the vertex reinforced random walk \eqref{eq:vrrw} converges globally, i.e., for any starting points $x_0,y_0\in \SS$ we have 
$$
   \lim_{t\to\infty} \|x_t -  x\|_1 =  \lim_{t\to\infty} \|y_t -  x\|_1 = 0 ,
$$
where $x$ is the unique stochastic solution of $x = \TP  x x$.
Moreover, if there exists a positive constant $\alpha$ such that $c_t \geq \alpha$
then the convergence is linear.
\end{theorem}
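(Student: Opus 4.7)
The plan is to reduce the convergence claim to a two-dimensional linear recursion in the errors $a_t := \|x_t - x\|_1$ and $b_t := \|y_t - x\|_1$, and then to handle that recursion with a carefully weighted scalar Lyapunov function. Existence and uniqueness of the stochastic $x$ solving $x = \TP xx$ is immediate: by \eqref{eq:TauLR} we have $\Tau(\TP) \leq \Tau_L(\TP) + \Tau_R(\TP) < 1$, so Theorem \ref{thm:ergodic_thm_tensor} applies and in particular gives the uniqueness claim.

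For the recursion, I would first verify by induction on \eqref{eq:vrrw} that $x_t, y_t \in \SS$ for all $t$. Writing $x_{t+1} - x = \TP x_t y_t - \TP xx = \TP(x_t-x) y_t + \TP x (y_t-x)$, observing that $y_t, x \in \SS$ while $x_t - x$ and $y_t - x$ have zero coordinate sum, and applying the definitions \eqref{eq:TauDef} term-by-term yields $a_{t+1} \leq \Tau_R(\TP)\, a_t + \Tau_L(\TP)\, b_t$. The second equation in \eqref{eq:vrrw} rewrites as $y_{t+1} - x = c_t (x_t - x) + (1-c_t)(y_t - x)$, which together with the triangle inequality gives $b_{t+1} \leq c_t a_t + (1-c_t) b_t$.

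The crux is then to find a scalar quantity that decreases jointly under this pair of coupled inequalities. My proposal is to fix some $\kappa \in \bigl(\Tau_L(\TP)/(1-\Tau_R(\TP)),\, 1\bigr)$, an interval that is non-empty precisely because $\Tau_L(\TP)+\Tau_R(\TP)<1$, and to set $V_t := \max(a_t,\, \kappa b_t)$. Bounding $a_t \leq V_t$ and $b_t \leq V_t/\kappa$ in the two recursions yields $a_{t+1} \leq \mu V_t$ with $\mu := \Tau_R(\TP) + \Tau_L(\TP)/\kappa < 1$ and $\kappa b_{t+1} \leq \bigl(1 - (1-\kappa) c_t\bigr) V_t$, so $V_{t+1} \leq \rho_t V_t$ with $\rho_t := \max\bigl(\mu,\, 1-(1-\kappa) c_t\bigr) \in [0,1]$. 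Convergence $V_t \to 0$ is equivalent to divergence of $\sum_t (1-\rho_t) = \sum_t \min\bigl(1-\mu,\, (1-\kappa) c_t\bigr)$. If $c_t \geq \alpha > 0$, every summand is bounded below by $\min(1-\mu,\, (1-\kappa)\alpha) > 0$, giving linear convergence at the explicit rate $\max\bigl(\mu,\, 1-(1-\kappa)\alpha\bigr) < 1$. Otherwise, the non-increasing sequence $\{c_t\}$ must tend to $0$, so eventually $(1-\kappa) c_t \leq 1-\mu$ and the tail of $\sum(1-\rho_t)$ equals $(1-\kappa)\sum c_t$, which diverges by the hypothesis \eqref{eq:c_t}.

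The main obstacle I expect is the design of this Lyapunov function. A naive choice such as $a_t + b_t$ fails, because the self-coefficient $1-c_t$ in the $b$-recursion can be arbitrarily close to $1$, ruling out any per-step uniform contraction. Weighting the two coordinates asymmetrically with $\kappa < 1$ is what forces $b_t$ to be pulled \emph{towards} the smaller quantity $a_t$ at a rate modulated by $c_t$, while $a_t$ itself contracts at the fixed rate $\mu$ independent of $c_t$; the divergence hypothesis $\sum c_t = \infty$ is then exactly what is needed to drive $V_t$ to zero in the vanishing-$c_t$ regime.
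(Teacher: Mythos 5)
Your proof is correct, and it shares the paper's skeleton: the same decomposition $x_{t+1}-x = \TP(x_t-x)y_t + \TP x(y_t-x)$, the same coupled scalar recursion $a_{t+1}\leq \Tau_R(\TP)a_t+\Tau_L(\TP)b_t$, $b_{t+1}\leq c_ta_t+(1-c_t)b_t$, and the same endgame via the infinite-product criterion $\prod(1-u_j)\to 0 \iff \sum u_j=\infty$ applied to the divergent series built from $\{c_t\}$. Where you genuinely diverge is in how contraction is extracted from the two-dimensional recursion: the paper keeps the unweighted $\max\{a_t,b_t\}$, observes that each transition matrix $A_t$ has $\|A_t\|_\infty=1$, and instead bounds the \emph{product of two consecutive} matrices, $\|A_tA_{t-1}\|_\infty=\max\{r+\ell(\ell+r),\,1-c_t(\ell+r)\}<1$; you instead rescale the second coordinate by $\kappa\in(\Tau_L(\TP)/(1-\Tau_R(\TP)),1)$ (equivalently, conjugate $A_t$ by $\mathrm{diag}(1,\kappa)$) so that every single step contracts by $\rho_t=\max(\mu,1-(1-\kappa)c_t)<1$. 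Your device avoids the case analysis over which term realizes the two-step norm and yields a cleaner, fully explicit linear rate $\max(\mu,1-(1-\kappa)\alpha)$ when $c_t\geq\alpha$, at the cost of introducing the auxiliary parameter $\kappa$; the paper's version needs no weight but only gets contraction over pairs of steps. Two small polish points: the phrase ``convergence $V_t\to 0$ is equivalent to divergence of $\sum_t(1-\rho_t)$'' should be ``is implied by'' (only that direction is used, and only that direction holds for $V_t$ itself), and one should note that monotonicity of $\{c_t\}$ together with \eqref{eq:c_t} forces $c_t>0$ for all $t$, so that every $\rho_t<1$ and the infinite-product criterion applies in its standard form.
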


\begin{proof}
Firstly, note that, in the stated hypotheses,
the vector $ x$ exists and is unique owing to Theorem \ref{thm:ergodic_thm_tensor}. 
Subtracting the identity $x = \TP x x$
from \eqref{eq:vrrw} we obtain 
\begin{align*}
   x_{t+1} -  x & = 
   \TP(x_t- x)y_{t} + \TP x(y_{t} - x) \\ 
   y_{t+1} -  x & = c_t (x_t- x) + (1-c_t) (y_{t} - x) .
\end{align*}
Let $\alpha_t = \|x_t- x\|_1$ and $\beta_t = \|y_t- x\|_1$. Using vector inequalities, we have
$$
   \begin{bmatrix} \alpha_{t+1} \\ \beta_{t+1} \end{bmatrix}
   \leq 
   \begin{bmatrix} \Tau_L(\TP) & \Tau_R(\TP) \\ c_t & 1-c_t \end{bmatrix}
   \begin{bmatrix} \alpha_t \\ \beta_t \end{bmatrix} .
$$
Let $\gamma_t = \|(\alpha_t, \beta_t)^T\|_\infty = \max\{\alpha_t, \beta_t\}$.
For notational simplicity, let $\ell = \Tau_L(\TP)$, $r = \Tau_R(\TP)$, and define
$$
   A_t = \begin{bmatrix} \ell & r \\ c_t & 1-c_t \end{bmatrix} .
$$
Hence, for $t = 1,2\ldots$ we have 
$$
   \gamma_{t+1} \leq \|A_t\cdots A_1 A_0 \|_\infty \gamma_0 .
$$
Moreover, since $\gamma_{t+1} \leq \|A_t\|_\infty \gamma_t$ and $\|A_t\|_\infty = 1$, we have $\gamma_{t+1} \leq \gamma_t$, 
that is, the sequence $\{\gamma_t\}$ is non-increasing.
Now, for $t \geq 1$ consider the product $A_{t}A_{t-1}$.
Simple computations show that
\begin{align*}
   A_{t}A_{t-1} & = \begin{bmatrix}
   \ell^2+rc_{t-1} &  r(\ell+1-c_{t-1}) \\ \ell c_t + (1-c_t)c_{t-1} & r c_t + (1-c_t)(1-c_{t-1})
   \end{bmatrix} \\
   \|A_{t}A_{t-1}\|_\infty 
   & = \max\{r + \ell(\ell+r), 1 - c_t(\ell+r) \} < 1 .
\end{align*}
In particular, if $\lim_{t\to \infty}c_t = 0$ 
then there exists an integer $t_*$ such that for $t\geq t_*$
it holds  $\|A_{2t}A_{2t-1}\|_\infty = 1 - c_{2t}(\ell+r)$.
Consequently, we have
\begin{align*}
   \gamma_{2t+1}  \leq \bigg( \prod_{j=t_*}^{t} \| A_{2j}A_{2j-1}\|_\infty \bigg) \|A_{2t_*-2}\cdots A_1 A_0\|_\infty \gamma_0  = C \prod_{j=t_*}^{t} (1 - c_{2j}(\ell+r)) ,
\end{align*}
where $C = \|A_{2t_*-2}\cdots A_1 A_0\|_\infty \gamma_0$.
In order to prove that $\lim_{t\to\infty}\gamma_t = 0$ it is sufficient to discuss the limit
$$
   \lim_{t\to\infty} \prod_{j=1}^{t} (1 - c_{2j}(\ell+r)) ,
$$
which exists and is nonnegative since all factors belong to $(0,1)$.
By a known result on the convergence of infinite products,
see e.g., \cite[p.\ 223]{Knopp},
the preceding limit is positive if and only if the series
$$
   \sum_{j=1}^{\infty} c_{2j}(\ell+r) 
$$
is convergent. Hence, if \eqref{eq:c_t} holds then
$\lim_{t\to \infty}\gamma_t = 0$ and we are done.

On the other hand, if $c_t \geq \alpha > 0$ 
then there exists a number $s \in(0,1)$
such that $\|A_{t}A_{t-1}\|_\infty \leq s$.
Hence,
$$
   \gamma_{2t} \leq \prod_{j=0}^{t-1} \|A_{2j+1}A_{2j}\|_\infty \gamma_0
   \leq s^{t} \gamma_0 ,
$$
and the last claim follows.
\end{proof}

Note that both the spacey random walk \eqref{eq:spacey} and the second-order Markov chain \eqref{eq:PM2} are particular cases of the stochastic processes \eqref{eq:vrrw}, corresponding to the choices $c_t = \frac{1}{t+1}$ and $c_t=1$, respectively. 
Observe that both these choices satisfy the assumption \eqref{eq:c_t}. Thus, the convergence condition for the second-order Markov chain of Theorem \ref{thm:PM2} also follows as a consequence of  Theorem \ref{thm:vrrw}. 
Moreover, we obtain  the following convergence result for the spacey random walk which, to the best of our knowledge, is the first   
result that gives explicit conditions that guarantee the convergence of both the occupation vector 
and the density distribution for this stochastic process.

\begin{corollary}
If $\Tau_L(\TP) + \Tau_R(\TP) < 1$ 
then the spacey random walk \eqref{eq:spacey} converges globally, i.e., for any starting points $x_0,y_0\in \SS$ we have 
$$
   \lim_{t\to\infty} \|x_t -  x\|_1 =  \lim_{t\to\infty} \|y_t -  x\|_1 = 0 ,
$$
where $x$ is the unique stochastic solution of $x = \TP x x$.
\end{corollary}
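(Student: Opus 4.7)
The plan is to verify that the spacey random walk \eqref{eq:spacey} is exactly an instance of the general vertex-reinforced process \eqref{eq:vrrw} for a specific admissible choice of the sequence $\{c_t\}$, and then invoke Theorem~\ref{thm:vrrw} directly.

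First I would observe that taking $c_t = 1/(t+1)$ in \eqref{eq:vrrw} yields $1-c_t = t/(t+1)$, which matches the recurrence $y_{t+1} = \tfrac{1}{t+1} x_t + \tfrac{t}{t+1} y_t$ in \eqref{eq:spacey}; the $x$-update $x_{t+1} = \TP x_t y_t$ is identical in both systems. So the spacey random walk is precisely the process \eqref{eq:vrrw} with $c_t = 1/(t+1)$.

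Next I would check the two hypotheses of Theorem~\ref{thm:vrrw} on this $\{c_t\}$. Monotonicity is immediate: $c_t = 1/(t+1)$ is strictly decreasing in $t$. The divergence condition \eqref{eq:c_t} is just the divergence of the harmonic series, $\sum_{t=1}^{\infty} 1/(t+1) = +\infty$. The remaining hypothesis, $\Tau_L(\TP)+\Tau_R(\TP)<1$, is assumed in the corollary.

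With all hypotheses verified, Theorem~\ref{thm:vrrw} gives the existence and uniqueness of the stochastic solution $x = \TP x x$ (which also follows from Theorem~\ref{thm:ergodic_thm_tensor} since $\Tau(\TP) \le \Tau_L(\TP)+\Tau_R(\TP) < 1$ by \eqref{eq:TauLR}), together with the global convergence $\|x_t - x\|_1 \to 0$ and $\|y_t - x\|_1 \to 0$ from any starting points $x_0,y_0\in\SS$. This yields the claim. There is no genuine obstacle here; the work has already been done in Theorem~\ref{thm:vrrw}, and the only point worth flagging is that since $c_t \to 0$ the convergence rate delivered by that theorem is sublinear, falling into the case handled by the infinite-product argument rather than the geometric tail.
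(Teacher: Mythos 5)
Your proposal is correct and follows essentially the same route as the paper: identify the spacey random walk as the instance of \eqref{eq:vrrw} with $c_t = 1/(t+1)$, note that this sequence is non-increasing and satisfies the divergence condition \eqref{eq:c_t}, and invoke Theorem~\ref{thm:vrrw}. Your additional remarks (uniqueness via $\Tau(\TP)\le\Tau_L(\TP)+\Tau_R(\TP)$ and the sublinear rate since $c_t\to 0$) are accurate but not needed beyond what the theorem already delivers.
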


\begin{proof}
It suffices to observe that the coefficient sequence $\{c_t\}$ of the spacey random walk is a trailing sub-sequence of the harmonic sequence, hence the hypothesis
\eqref{eq:c_t} is fulfilled.
\end{proof}

% --------------------------------------------------------------
\section{Comparison with previous works}\label{sec:previous-work}
% --------------------------------------------------------------
In this section we discuss how the newly proposed higher-order ergodicity coefficient $\Tau(\TP)$,  based on the $1$-norm, compares with previous works. In particular,  we compare it with the contraction ratios proposed by Gautier and Tudisco in \cite{gautier2018contractivity}, where the Hilbert metric is used to quantify the contractivity of multilinear  operators, and with the coefficients  introduced by Li and Ng in \cite{LiNg} in order to characterize the uniqueness of stationary distributions of stochastic tensors.

% --------------------------------------------------------------------
\subsection{Higher-order Birkhoff coefficients}\label{sec:BH}
% --------------------------------------------------------------------

When $d$ is the Hilbert projective metric $d_H$ defined in \eqref{eq:hilbert}, the ergodicity coefficient \eqref{eq:tau(f)} is known as Birkhoff contraction ratio and the renowned Birkhoff--Hopf theorem provides an explicit formula for such coefficient 
when $f$ is a linear map.
Recently, the Birkhoff--Hopf theorem has been extended to the case of multilinear mappings \cite{gautier2018contractivity}. We review that theorem in the following, for the case of a bilinear map $f:\R^n\times \R^n \to\R^n$ described by a cubic tensor $\TP$ as $f(x,y) = \TP xy$.

\begin{theorem}  \label{thm:BH}
Let $\TP\in\R^{[3,n]}$ be a nonnegative tensor, let 
$$
   \triangle(\TP) = 
   \max_{i_1,j_1,k_1,i_2,j_2,k_2}\, \frac{\TP_{i_1j_1k_1}\TP_{i_2j_2k_2}}{\TP_{i_1j_2k_1}\TP_{i_2j_1k_2}},
$$
and let 
$\kappa(\TP)  = \tanh(\frac14 \log\triangle(\TP))$. 
Then 
$$
   d_H(\TP xy, \TP x'y') \leq
   \kappa(\TP)d_H(x,x') + \kappa(\TP^S)d_H(y,y')\, .
$$
\end{theorem}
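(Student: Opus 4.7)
The plan is to reduce this bilinear Birkhoff--Hopf inequality to two applications of the classical (matrix) Birkhoff--Hopf theorem, connected by a triangle inequality for the Hilbert metric. Specifically, I would write
\begin{equation*}
   d_H(\TP xy, \TP x'y') \;\leq\; d_H(\TP xy, \TP x'y) + d_H(\TP x'y, \TP x'y')
\end{equation*}
and then bound each of the two resulting terms separately by freezing one of the two vector arguments.

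For the first term, with $y$ fixed, the map $x \mapsto \TP xy$ is linear and coincides with $A^y x$, where $A^y$ is the matrix with entries $A^y_{ij} = \sum_k \TP_{ijk}\, y_k$ (equivalently, $A^y = \TP \cdot_3 y$). The classical Birkhoff--Hopf theorem gives $d_H(A^y x, A^y x') \leq \tanh\bigl(\tfrac14\log\triangle(A^y)\bigr)\, d_H(x,x')$. For the second term, with $x'$ fixed, the map $y\mapsto \TP x' y$ equals $B^{x'} y$ with $B^{x'}_{ik} = \sum_j \TP_{ijk}\, x'_j$, and the same matrix result yields $d_H(B^{x'}y, B^{x'}y') \leq \tanh\bigl(\tfrac14\log\triangle(B^{x'})\bigr)\, d_H(y,y')$.

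The heart of the proof is then showing $\triangle(A^y)\leq \triangle(\TP)$ and $\triangle(B^{x'})\leq \triangle(\TP^S)$, uniformly in $y$ and $x'$. Both reductions follow from the elementary fact that for nonnegative scalars with $a_{k_1}b_{k_2}\leq M\, c_{k_1}d_{k_2}$ for every pair $(k_1,k_2)$, one obtains after summation
\begin{equation*}
   \frac{\bigl(\sum_k a_k\bigr)\bigl(\sum_k b_k\bigr)}{\bigl(\sum_k c_k\bigr)\bigl(\sum_k d_k\bigr)} \;=\; \frac{\sum_{k_1,k_2} a_{k_1}b_{k_2}}{\sum_{k_1,k_2}c_{k_1}d_{k_2}} \;\leq\; M .
\end{equation*}
Expanding $A^y_{i_1j_1}A^y_{i_2j_2}/(A^y_{i_1j_2}A^y_{i_2j_1})$ with this inequality (where $a_k = \TP_{i_1j_1k}y_k$ etc.) gives per-term ratios $\TP_{i_1j_1k_1}\TP_{i_2j_2k_2}/(\TP_{i_1j_2k_1}\TP_{i_2j_1k_2})$, each bounded by $\triangle(\TP)$. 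The analogous computation for $B^{x'}$ produces ratios that, after relabeling the second and third modes of $\TP$, become precisely those defining $\triangle(\TP^S)$, establishing the second bound. Combining the two contraction estimates yields the claim.

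The main obstacle is purely book-keeping: correctly identifying how the transposition $\TP^S$ enters naturally in the $y$-variable contraction (because freezing $x'$ and varying $y$ selects the third mode rather than the second one, which is the one swapped in the definition of $\triangle(\TP)$). A minor subtlety is that $d_H$ is a projective metric defined on strictly positive vectors, so the statement should be interpreted with both sides possibly infinite when entries vanish; the reasoning above nevertheless goes through by standard continuity/positivity arguments on the cone interior, and the inequality extends to the boundary by semicontinuity.
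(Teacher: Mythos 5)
Your argument is correct, and in fact the paper does not prove this statement at all: Theorem~\ref{thm:BH} is presented as a review of a result from \cite{gautier2018contractivity}, so there is no in-paper proof to compare against. Your route --- splitting $d_H(\TP xy,\TP x'y')$ by the triangle inequality through the intermediate point $\TP x'y$, applying the classical matrix Birkhoff--Hopf theorem to the collapsed matrices $A^y_{ij}=\sum_k\TP_{ijk}y_k$ and $B^{x'}_{ik}=\sum_j\TP_{ijk}x'_j$, and controlling their cross-ratios by $\triangle(\TP)$ and $\triangle(\TP^S)$ via the mediant inequality --- is the standard way such multilinear Birkhoff--Hopf bounds are established, and the index bookkeeping checks out: the $A^y$ cross-ratio swaps the second mode, matching the definition of $\triangle(\TP)$, while the $B^{x'}$ cross-ratio swaps the third mode, which after the relabeling $(\TP^S)_{ijk}=\TP_{ikj}$ is exactly $\triangle(\TP^S)$. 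Your closing caveat is the right one to flag: when $\TP$ has zero entries one has $\kappa(\TP)=1$ and the inequality must be read on the part of the cone where the relevant Hilbert distances are finite (vectors with common support), which is the usual convention in this literature and is how the paper uses the result (note that in Corollary~\ref{cor:BH} the hypothesis $\Tau_H(\TP)<1$ silently excludes the degenerate sparse case).
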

From Theorem \ref{thm:BH} we immediately derive a formula for the higher-order Birkhoff ergodicity coefficient for stochastic tensors, and the corresponding analogous of Theorem \ref{thm:ergodic_thm_tensor}. Precisely, we have
the following result.

\begin{corollary}   \label{cor:BH}
Let $\TP\in\R^{[3,n]}$ be a stochastic tensor and let 
$$
   \Tau_H(\TP) = 2\,\kappa(\TP+\TP^S) =
   2\, \tanh(\textstyle\frac14 \log\widehat\triangle(\TP))
$$
where
$$
   \widehat\triangle(\TP) = 
   \max_{i_1,j_1,k_1,i_2,j_2,k_2}\, \frac{(\TP_{i_1j_1k_1}+\TP_{i_1k_1j_1})(\TP_{i_2j_2k_2}+\TP_{i_2k_2j_2})}{(\TP_{i_1j_1k_2}+\TP_{i_1k_2j_1})(\TP_{i_2j_2k_1}+\TP_{i_2k_1j_2})}\, .
$$
If $\Tau_H(\TP)<1$ then there exists a unique  $Z$-eigenvector $x\in \mathcal S_1$ such that $\TP xx = x$ and the higher-order power iteration $x_{t+1}=\TP x_tx_t$ converges to $x$ for any starting point $x_0\in S_1$.
\end{corollary}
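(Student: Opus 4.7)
The plan is to reduce to a contraction estimate for an $S$-symmetric tensor and then invoke Theorem~\ref{thm:BH}. First I would set $\b Q = \frac12(\TP + \TP^S)$ and record two structural facts: $\b Q$ is stochastic and $S$-symmetric, and $\TP xx = \TP^S xx$ for every $x$ (by swapping the two summation indices in $(\TP^S xx)_i = \sum_{jk}\TP_{ikj}x_jx_k$), whence $\b Q xx = \TP xx$. Consequently the fixed-point equation $\TP xx = x$ and the iteration $x_{t+1} = \TP x_t x_t$ coincide with those obtained by replacing $\TP$ with $\b Q$.

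Next I would apply Theorem~\ref{thm:BH} to $\b Q$ with $y = x$ and $y' = x'$. Because $\b Q^S = \b Q$, this yields
\[
   d_H(\b Q xx, \b Q x'x') \leq 2\,\kappa(\b Q)\, d_H(x,x').
\]
Since $\kappa(\cdot)$ depends only on ratios of entries and $\TP + \TP^S = 2\b Q$, I have $\kappa(\b Q) = \kappa(\TP + \TP^S)$, so the Lipschitz constant is exactly $\Tau_H(\TP)$. Under the hypothesis $\Tau_H(\TP)<1$, the map $f(x) = \TP xx$ is therefore a strict contraction on the positive interior of $\mathcal S_1$ equipped with the Hilbert projective metric, which is a well-known complete metric space.

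Existence and uniqueness of a positive fixed point $x \in \mathcal S_1$ with $\TP xx = x$ then follow from the Banach contraction principle, together with the quantitative convergence bound $d_H(x_t, x) \leq \Tau_H(\TP)^t\, d_H(x_0, x)$ for any positive starting point $x_0$.

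The main technical obstacle is to extend convergence to an arbitrary $x_0 \in \mathcal S_1$, possibly with zero entries, since in that case $d_H(x_0, x)$ may be infinite and the preceding estimate is vacuous. Here I would exploit that finiteness of $\widehat\triangle(\TP)$, which is forced by $\Tau_H(\TP) < 1$, ensures the symmetrized entries $\TP_{ijk}+\TP_{ikj}$ do not vanish. Combined with stochasticity, this guarantees that after a single application of $f$ the iterate $x_1 = \TP x_0 x_0$ lands in the positive interior of $\mathcal S_1$, from which point the Hilbert-metric contraction argument applies verbatim. Pinning down this positivity-propagation step cleanly is the only delicate part of the proof, but it reduces to a standard cone-theoretic verification analogous to the irreducibility arguments used in Perron--Frobenius theory.
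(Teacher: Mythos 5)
Your proof is correct and follows essentially the same route as the paper's: symmetrize to $\b Q = \frac12(\TP+\TP^S)$, use $\TP xx = \b Q xx$ together with Theorem~\ref{thm:BH} to get the Lipschitz constant $2\kappa(\b Q) = \Tau_H(\TP)$ in the Hilbert metric, and conclude by the Banach fixed point theorem. The only differences are cosmetic: you apply the bilinear bound of Theorem~\ref{thm:BH} directly with $y=x$, $y'=x'$ where the paper inserts the intermediate point $\b Q xy$ via the triangle inequality (same constant either way), and your extra care about starting points $x_0$ on the boundary of $\mathcal S_1$ is a welcome refinement of a step the paper passes over by simply asserting completeness of $(\mathcal S_1, d_H)$.
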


\begin{proof}
Consider the $S$-symmetric
tensor $\b Q = \frac12 (\b P + \b P^S)$.
Note that $\triangle(\b Q) = \widehat\triangle(\b P)$ 
and thus 
$\kappa(\b Q) = \frac 1 2 \Tau_H(\b P)$.
Therefore, using the identity $\b Pxx = \b Qxx$, which holds for all $x\in\R^n$, the triangle inequality for $d_H$ and Theorem \ref{thm:BH}, we have 
\begin{align*}
    d_H(\TP xx, \TP yy) & = d_H(\b Qxx,\b Qyy)  \leq d_H(\b Q xx, \b Q xy) + d_H(\b Q xy, \b Q yy)   \\
    & \leq \kappa(\b Q) [ d_H(x,x)+ d_H(x,y) + 
           d_H(x,y) + d_H(y,y) ]   = \Tau_H(\TP) d_H(x,y) .
\end{align*}
This shows that $x\mapsto \TP xx$ is a contraction with respect to the Hilbert metric. As $(\mathcal S_1,d_H)$ is a complete metric space, the proof continues as that of Theorem \ref{thm:ergodic_thm_tensor}.
\end{proof}

Note that, similarly to the $1$-norm case, $\Tau_H(\TP)=0$ if and only if $\TP$ has rank one, that is, $\Tau_H$ is proper. 
However, while $\Tau_H(\TP)=2$ for any tensor $\TP$ not of rank one and having at least one zero entry, 
$\Tau(\TP)$ can be smaller than one even for sparse tensors. For example, if $\TP$ is the tensor
$$
\TP = \frac 1 2 \left[ \begin{bmatrix}
0 & 1 & 1 \\
1 & 0 & 0 \\
1 & 1 & 1
\end{bmatrix}\begin{bmatrix}
0 & 1 & 1 \\
1 & 0 & 1 \\
1 & 1 & 0
\end{bmatrix}\begin{bmatrix}
0 & 0 & 0 \\
2 & 0 & 1 \\
0 & 2 & 1
\end{bmatrix}\right]
$$
then one easily verifies that $\Tau_H(\b P) =2$, while $\Tau(\b P) = 1/2$. 

The left panel of Figure \ref{fig:compare_taus} scatter plots these two coefficients computed on a set of ten thousand random stochastic $n\times n \times n$ tensors with size $n$  between $2$ and $10$. 
In the matrix case it is well known that, for any stochastic matrix $P$ it holds $\tau_1(P)\leq \tau_H(P)$, see \cite[\S 3.4]{SenetaBook}.
While the  numerical comparison 
% computed in this way 
shown in Figure \ref{fig:compare_taus} suggests the inequality $\Tau(\TP)\leq \Tau_H(\TP)$, an explicit comparison between the $1$-norm and the Birkhoff higher-order coefficients $\Tau(\TP)$ and $\Tau_H(\TP)$, for general tensors,  is out of scope and is left open to future work.

\subsection{Li and Ng's coefficients}

Given a stochastic tensor $\b P\in\R^{[3,n]}$, consider the following quantities
introduced in \cite{Li+NLAA13,LiNg}:
\begin{gather}
    \gamma(\TP) = \min_{\II\subset [n]} 
    \Big\{ \min_{k} \Big( \min_{j\in \II} \sum_{i\not\in \II}\b P_{ijk}+
    \min_{j\not\in \II}\sum_{i\in \II}\b P_{ijk} \Big)  +\min_{j}\Big( \min_{k\in \II} \sum_{i\not\in \II}\b P_{ijk}+
    \min_{k\not\in \II}\sum_{i\in \II}\b P_{ijk} \Big)  \Big\}  \label{eq:gamma}
     \\
    \delta(\TP) = \min_{\II\subset [n]} 
    \Big(  \min_{j,k} \sum_{i\not\in \II}\b P_{ijk}+
    \min_{j,k}\sum_{i\in \II}\b P_{ijk} \Big) .
    \label{eq:delta3}
\end{gather}
% It is easy to check that $\delta(\TP) \leq 1$ and $\gamma(\TP) \geq 2\delta(\TP)$.
Li and Ng proved in \cite{LiNg}
two conditions for the uniqueness of the stationary distribution and the convergence of the iteration $x_{t+1} = \TP x_tx_t$ 
in terms of the entries of $\TP$, 
that we review in the following. 

\begin{theorem}[\cite{LiNg}]   \label{thm:LiNg}
Let $\TP\in\R^{[3,n]}$ be a stochastic tensor.
If $\gamma(\TP) > 1$ then there exists an unique solution $x\in\SS$
of the equation $x = \TP xx$.
Moreover, 
% if $\gamma(\TP) > 1$ then 
the iteration $x_{t+1} = \TP x_tx_t$ converges to~$x$.
\end{theorem}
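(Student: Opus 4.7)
The plan is to deduce the theorem from Theorem \ref{thm:ergodic_thm_tensor} by showing that the hypothesis $\gamma(\b P) > 1$ is already stronger than $\Tau(\b P) < 1$. Specifically, I would prove the inequality
\[
\gamma(\b P) \;\geq\; 2 - \Tau_L(\b P) - \Tau_R(\b P),
\]
whence $\gamma(\b P) > 1$ gives $\Tau_L(\b P) + \Tau_R(\b P) < 1$, and hence $\Tau(\b P) < 1$ by \eqref{eq:TauLR}; uniqueness of the $Z$-eigenvector and convergence of $x_{t+1}=\b P x_tx_t$ then follow at once from Theorem \ref{thm:ergodic_thm_tensor}.

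The first step is to split $\gamma(\b P)$ into two contributions that can be bounded separately in terms of $\Tau_L(\b P)$ and $\Tau_R(\b P)$. Setting
\[
\gamma_A(\b P,\II) \;:=\; \min_{k} \Big(\min_{j\in \II} \sum_{i\not\in \II} \b P_{ijk} + \min_{j\not\in \II} \sum_{i\in\II} \b P_{ijk}\Big),
\]
a direct relabelling of dummy indices, combined with the identity $\b P^S_{ijk}=\b P_{ikj}$, shows that the second summand inside the braces of \eqref{eq:gamma} is exactly $\gamma_A(\b P^S, \II)$. Consequently $\gamma(\b P) = \min_\II[\gamma_A(\b P,\II) + \gamma_A(\b P^S,\II)]$.

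In the second step I would compare $\gamma_A(\b P,\II)$ with $1-\Tau_R(\b P)$. Since a minimum over a restricted index set is no smaller than the unrestricted one, dropping the constraints $j\in\II$ and $j\not\in\II$ yields
\[
\gamma_A(\b P,\II) \;\geq\; \min_k\Big(\min_{j_1}\sum_{i\not\in \II}\b P_{ij_1 k} + \min_{j_2}\sum_{i\in \II}\b P_{ij_2 k}\Big).
\]
The inner minima over the independent variables $j_1$ and $j_2$ collapse into a single $\min_{j_1,j_2,k}$, so taking $\min_\II$ on both sides and invoking \eqref{eq:TauRs2} gives $\min_\II \gamma_A(\b P,\II)\geq 1-\Tau_R(\b P)$. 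The identity $\Tau_R(\b P^S)=\Tau_L(\b P)$ produces the symmetric bound $\min_\II \gamma_A(\b P^S,\II)\geq 1-\Tau_L(\b P)$, and the elementary inequality $\min_\II[f+g]\geq \min_\II f+\min_\II g$ assembles the main estimate.

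The only real obstacle will be the index bookkeeping in the first step: one must verify that swapping the outer minimisation variable ``$k$'' with ``$j$'' and simultaneously swapping the inner membership constraints on $\II$ is accounted for exactly by $S$-transposition; the remaining steps are a short chain of standard inequalities between minima, and no substantive new idea is needed beyond the machinery already developed in Section \ref{sec:HO_erg_coeff}.
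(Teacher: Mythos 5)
Your derivation of the inequality $\gamma(\b P) \geq 2 - \Tau_L(\b P) - \Tau_R(\b P)$ is itself correct (the splitting into $\gamma_A(\b P,\II)+\gamma_A(\b P^S,\II)$, the relaxation of the constraints $j\in\II$, $j\notin\II$, and the use of \eqref{eq:TauRs2} all check out), but the logical step you draw from it is backwards. From $\gamma(\b P) \geq 2 - \Tau_L(\b P) - \Tau_R(\b P)$ one may conclude that $\Tau_L(\b P)+\Tau_R(\b P) < 1$ \emph{implies} $\gamma(\b P) > 1$ — not the converse. Knowing $\gamma(\b P) > 1$ together with your inequality tells you nothing about $\Tau_L(\b P)+\Tau_R(\b P)$: for instance $\gamma(\b P)=1.5$ is perfectly compatible with $\Tau_L(\b P)+\Tau_R(\b P)=1.7$, since $2-1.7=0.3\leq 1.5$. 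So the reduction to Theorem \ref{thm:ergodic_thm_tensor} does not go through.

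This is not a repairable slip in the bookkeeping: the paper's own comparison in Section \ref{sec:previous-work} establishes $2-\gamma(\TP)\leq\Tau(\TP)$ for $S$-symmetric tensors and explicitly remarks that the hypothesis $\gamma(\TP)>1$ is \emph{weaker} than $\Tau(\TP)<1$, i.e., Theorem \ref{thm:LiNg} is strictly more general than what the $1$-norm ergodicity coefficients can deliver. To obtain the reduction you propose you would need the reverse inequality $\gamma(\b P)\leq 2-\Tau_L(\b P)-\Tau_R(\b P)$, which is false in general. The theorem is in fact an external result quoted from \cite{LiNg}, and its proof there rests on a different contraction argument tied to the combinatorial quantity $\gamma$ itself rather than on the coefficients $\Tau_L,\Tau_R,\Tau$.
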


As $\gamma(\TP) \geq 2\delta(\TP)$, the following consequence is immediate.

\begin{corollary}   \label{thm:LiNg2}
Let $\TP\in\R^{[3,n]}$ be a stochastic tensor.
If $\delta(\TP) > 1/2$ then 
all the claims in the preceding theorem are true.
\end{corollary}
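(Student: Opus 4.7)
The plan is to reduce this corollary immediately to Theorem~\ref{thm:LiNg} by establishing the inequality $\gamma(\TP) \geq 2\delta(\TP)$, which is precisely what the author sketches. Once that comparison is in hand, the hypothesis $\delta(\TP) > 1/2$ gives $\gamma(\TP) > 1$ and both the uniqueness of the stochastic fixed point of $x = \TP xx$ and the convergence of the iteration $x_{t+1} = \TP x_t x_t$ follow from Theorem~\ref{thm:LiNg} without further work.

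To obtain $\gamma(\TP) \geq 2\delta(\TP)$, I would fix an arbitrary index subset $\II\subset[n]$ and compare the two $\II$-dependent inner expressions defining $\gamma$ and $\delta$. The first bracketed term in \eqref{eq:gamma} is
\[
   \min_{k}\Big(\min_{j\in\II}\sum_{i\notin\II}\TP_{ijk} + \min_{j\notin\II}\sum_{i\in\II}\TP_{ijk}\Big),
\]
and I would bound it below using the elementary inequality $\min_k(f(k)+g(k)) \geq \min_k f(k) + \min_k g(k)$ together with the fact that taking a minimum over the smaller set $\II$ (respectively $[n]\setminus\II$) can only increase the value compared to a minimum over all of $[n]$. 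The result is the lower bound $\min_{j,k}\sum_{i\notin\II}\TP_{ijk} + \min_{j,k}\sum_{i\in\II}\TP_{ijk}$, which is exactly the $\II$-th inner term of $\delta(\TP)$ in \eqref{eq:delta3}. The second bracketed term of $\gamma$, obtained by swapping the roles of $j$ and $k$, is bounded below by the same quantity by the identical argument. Summing the two bounds gives twice the $\II$-th inner term of $\delta$.

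Since the inequality holds for each $\II$, taking the minimum over $\II$ on both sides yields $\gamma(\TP) \geq 2\delta(\TP)$. The assumption $\delta(\TP) > 1/2$ then forces $\gamma(\TP) > 1$, and Theorem~\ref{thm:LiNg} delivers both conclusions.

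I do not foresee a serious obstacle: the only point to watch is the direction of the inequality $\min_{j\in\II} \geq \min_{j\in[n]}$, which is opposite to the intuition one might have about adding constraints to a sum but is correct for taking a minimum over a restricted set. Everything else is purely bookkeeping over the definitions \eqref{eq:gamma} and \eqref{eq:delta3}.
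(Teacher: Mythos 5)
Your proposal is correct and follows exactly the paper's route: the paper simply asserts $\gamma(\TP) \geq 2\delta(\TP)$ and declares the corollary immediate from Theorem~\ref{thm:LiNg}, while you additionally (and correctly) verify that inequality via $\min_k(f+g)\geq \min_k f+\min_k g$ and the fact that a minimum over a restricted index set can only increase. No gap; the extra detail you supply is exactly the bookkeeping the paper leaves implicit.
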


Moreover, we recall from \cite[Thm.\ 4]{Li+NLAA13}
the three-mode case of a perturbation bound for the stationary probability vector of a 
stochastic tensor of order $m > 2$.

\begin{theorem}[\cite{Li+NLAA13}]  \label{thm:Li+NLAA13}
Let $\TP$ and its perturbation $\TP'$ be two stochastic tensors in $\R^{[3,m]}$.
If $\delta(\TP) > 1/2$ then the stochastic solution of $x = \TP xx$ is unique, and for 
any stochastic vector $x'$ such that $x' = \TP' x'x'$ it holds
$$
   \|x-x'\|_1 \leq \frac{\|\TP - \TP'\|_1}{2\delta(\TP) - 1} .
$$
\end{theorem}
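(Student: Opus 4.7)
The plan is to mirror the structure of Theorem \ref{thm:cond3}, substituting the coefficient $\delta(\TP)$ for $\Tau(\TP)$. I would first observe the standard telescoping split
\[
   \|x - x'\|_1 = \|\TP xx - \TP' x'x'\|_1 \leq \|\TP xx - \TP x'x'\|_1 + \|(\TP - \TP')x'x'\|_1 ,
\]
and note that, since $x'\in\SS$, the definition of $\|\cdot\|_1$ on $\R^{[3,n]}$ immediately yields $\|(\TP - \TP')x'x'\|_1 \leq \|\TP - \TP'\|_1$. The real work is therefore the contraction-type estimate
\[
   \|\TP xx - \TP x'x'\|_1 \leq 2(1-\delta(\TP))\,\|x - x'\|_1,
\]
from which both uniqueness of $x$ (via the Banach fixed point theorem applied to $f(x)=\TP xx$, which is a contraction when $\delta(\TP)>1/2$) and the perturbation bound follow after simple rearrangement.

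To establish the contraction estimate, I would use the bilinear identity $\TP xx - \TP x'x' = \TP x (x-x') + \TP(x-x')x'$, set $z = x-x'$ (which is zero-sum), and rewrite the two summands as $(\TP x)z$ and $(\TP^S x')z$ using the collapsing formula \eqref{eq:collapse} together with $\TP zx' = \TP^S x' z$. Since $x,x'\in\SS$ and $\TP$ is stochastic, both $\TP x$ and $\TP^S x'$ are stochastic matrices, so by Theorem \ref{thm:formulas_for_tau_matrices}
\[
   \|(\TP x)z\|_1 \leq \tau_1(\TP x)\|z\|_1 , \qquad \|(\TP^S x')z\|_1 \leq \tau_1(\TP^S x')\|z\|_1 .
\]
The remaining task is therefore purely matricial: to show that $\tau_1(\TP v) \leq 1-\delta(\TP)$ and $\tau_1(\TP^S v)\leq 1-\delta(\TP)$ for every $v\in\SS$.

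This is where the main (but short) obstacle lies. Plugging $P=\TP v$ into the Dobrushin formula of Theorem \ref{thm:formulas_for_tau_matrices} and using $(\TP v)_{ij}=\sum_k \TP_{ikj}v_k$, one observes that
\[
   \sum_{i\notin\II}(\TP v)_{ij_1} = \sum_k v_k \sum_{i\notin\II}\TP_{ikj_1} \;\geq\; \min_k \sum_{i\notin\II}\TP_{ikj_1},
\]
because this is a convex combination indexed by $v\in\SS$. Minimizing further over $j_1$ (and similarly treating the $\II$-sum) and then over $\II$ yields $1 - \tau_1(\TP v) \geq \delta(\TP)$ after relabeling indices. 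The identical argument applied to $\TP^S v$, together with the fact that $\delta(\TP^S) = \delta(\TP)$ by the symmetry of $\delta$ in its last two indices, gives the corresponding bound for $\tau_1(\TP^S v)$.

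Putting the pieces together, $\|\TP xx-\TP x'x'\|_1 \leq (1-\delta(\TP))\|z\|_1 + (1-\delta(\TP))\|z\|_1 = 2(1-\delta(\TP))\|x-x'\|_1$. Substituting into the opening inequality gives $\|x-x'\|_1 \leq 2(1-\delta(\TP))\|x-x'\|_1 + \|\TP-\TP'\|_1$, and rearranging yields the claimed perturbation bound with denominator $2\delta(\TP)-1$. Once the pointwise estimate $\tau_1(\TP v)\leq 1-\delta(\TP)$ is in hand, the rest of the argument is essentially a repetition of the reasoning already used for Theorem \ref{thm:cond3}, with the only subtlety being the convexity argument that converts the $v$-dependent Dobrushin bound on the stochastic matrix $\TP v$ into the $v$-free tensorial quantity $\delta(\TP)$.
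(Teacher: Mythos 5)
Your argument is correct. Note first that the paper does not prove this statement at all: it is quoted verbatim from the reference [Li, Cui, Ng, \emph{Numer.\ Linear Algebra Appl.}\ 2013], so there is no in-paper proof to match. What you have written is, in effect, a self-contained re-derivation using the paper's own machinery: the telescoping split and the bilinear decomposition $\TP xx-\TP x'x'=\TP x(x-x')+\TP(x-x')x'$ are exactly the scheme of Theorem \ref{thm:cond3}, and your key pointwise estimate $\tau_1(\TP v)\le 1-\delta(\TP)$ for all $v\in\SS$ (proved by the convex-combination argument on the collapsed matrix, which is sound --- $\TP x$ and $\TP^S x'$ are indeed column stochastic, and $\delta$ is symmetric in its last two modes) is equivalent to $\Tau_L(\TP)+\Tau_R(\TP)\le 2-2\delta(\TP)$, hence to the comparison $\Tau(\TP)\le 2-2\delta(\TP)$ that the paper establishes in Section \ref{sec:previous-work} by a different device, namely the characterization of $\delta(\TP)$ via $\max_{j_1,j_2,k_1,k_2}\|\TP e_{j_1}e_{k_1}-\TP e_{j_2}e_{k_2}\|_1$ in Lemma \ref{lem:chardelta}. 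Your route is the more direct one for this specific theorem (it avoids the lemma and goes straight through the Dobrushin formula of Theorem \ref{thm:formulas_for_tau_matrices}); the paper's route buys the sharper statement, since it exhibits Theorem \ref{thm:Li+NLAA13} as a strictly weaker consequence of Theorem \ref{thm:cond3} together with $1-\Tau(\TP)\ge 2\delta(\TP)-1$. The only cosmetic point is that existence of the fixed point $x$ should be credited to Brouwer (the contraction argument then gives uniqueness), but this is implicit in the statement.
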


In the sequel, we aim  to compare the above
results with the ones we proved in 
the previous sections. First, we prove a special characterization of $\delta(\TP)$
in \eqref{eq:delta3}, which provides an explicit formula for $\delta(\TP)$ in terms of the entries of $\TP$.

\begin{lemma}   \label{lem:chardelta}
Let $\TP\in\R^{[3,n]}$ be a stochastic tensor. Then
$$
   \delta(\TP) =  1 - \frac12 
   \max_{j_1,j_2,k_1,k_2} \| \TP e_{j_1}e_{k_1} - \TP e_{j_2}e_{k_2} \|_1 .
$$
\end{lemma}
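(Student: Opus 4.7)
The plan is to rewrite $\delta(\TP)$ by unfolding the minima and then swap the order of minimization so that the sum over $i$ separates additively over the indicator set $\II$.

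First I would note that, by stochasticity, $\sum_{i\notin\II}\TP_{ijk} = 1 - \sum_{i\in\II}\TP_{ijk}$, so setting $f_\II(j,k) = \sum_{i\in\II}\TP_{ijk}$ the inner bracket in \eqref{eq:delta3} becomes
\[
   \min_{j,k}\sum_{i\notin\II}\TP_{ijk} + \min_{j,k}\sum_{i\in\II}\TP_{ijk}
   = 1 - \max_{j,k} f_\II(j,k) + \min_{j,k} f_\II(j,k) .
\]
The maximum and the minimum above are achieved at some $(j_1,k_1)$ and $(j_2,k_2)$, and since they are taken over the two independent choices of pairs, the bracket equals
\[
   \min_{(j_1,k_1),\,(j_2,k_2)}\Big(\sum_{i\notin\II}\TP_{ij_1k_1} + \sum_{i\in\II}\TP_{ij_2k_2}\Big).
\]

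Next I would swap the two outer minimizations (over $\II$ and over the pairs) and perform the inner minimization over $\II$ index-by-index. For fixed $(j_1,k_1),(j_2,k_2)$, the quantity inside the parentheses decomposes as $\sum_i[\TP_{ij_1k_1}\uno_{i\notin\II}+\TP_{ij_2k_2}\uno_{i\in\II}]$, so it is minimized by placing $i\in\II$ precisely when $\TP_{ij_2k_2}\leq\TP_{ij_1k_1}$, which yields $\sum_i \min\{\TP_{ij_1k_1},\TP_{ij_2k_2}\}$. Hence
\[
   \delta(\TP) = \min_{j_1,j_2,k_1,k_2}\sum_{i=1}^n \min\{\TP_{ij_1k_1},\TP_{ij_2k_2}\}.
\]

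Finally, I would invoke the standard identity $\sum_i\min\{p_i,q_i\} = 1 - \tfrac12\|p-q\|_1$, valid for any two probability vectors $p,q\in\SS$. Since $\TP e_{j_1}e_{k_1}$ and $\TP e_{j_2}e_{k_2}$ are stochastic vectors with $i$-th entries $\TP_{ij_1k_1}$ and $\TP_{ij_2k_2}$ respectively, applying the identity gives
\[
   \sum_i \min\{\TP_{ij_1k_1},\TP_{ij_2k_2}\} = 1 - \tfrac12 \|\TP e_{j_1}e_{k_1} - \TP e_{j_2}e_{k_2}\|_1,
\]
and taking the outer minimum over the four indices (equivalently, negating and taking the max) produces the claimed formula.

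I do not expect a real obstacle here; the only place requiring care is the legitimacy of separating the pair $(j_1,k_1)$ that achieves the max of $f_\II$ from the pair $(j_2,k_2)$ that achieves the min, so that after swapping min over $\II$ with min over the two pairs the index-by-index decomposition is valid.
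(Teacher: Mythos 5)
Your argument is correct and follows essentially the same route as the paper: both proofs pivot on the expression $\min_{j_1,j_2,k_1,k_2}\min_{\II}\bigl(\sum_{i\notin\II}\TP_{ij_1k_1}+\sum_{i\in\II}\TP_{ij_2k_2}\bigr)$, obtained by decoupling the two independent minimizations over $(j,k)$ and exchanging them with the minimization over $\II$, and then relate it to the $1$-norm via the standard overlap identity (the paper uses $\|y\|_1=2\max_{\II}\sum_{i\in\II}y_i$ for zero-sum $y$, you use the equivalent $\sum_i\min\{p_i,q_i\}=1-\tfrac12\|p-q\|_1$). The step you flag as needing care is indeed legitimate, since the two minima in \eqref{eq:delta3} range over independent copies of $(j,k)$.
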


\begin{proof}
First, note that for any zero-sum vector $y\in\R^n$ it holds $\sum_i |y_i| = 2\max\big\{ \sum_{i\in\II} y_i : \II \subseteq \{1,\dots,n\} \big\}$.
Let $j_1,j_2,k_1,k_2$ be fixed. 
% Using Lemma \ref{lem:zerosum} 
Then we have
\begin{align*}
   \| \TP e_{j_1}e_{k_1} -& \TP e_{j_2}e_{k_2} \|_1  = 
   \sum_{i} | \TP_{ij_1k_1} - \TP_{ij_2k_2} |   =
   2 \max_{\mathcal{I}\subset[n]} \sum_{i\in\mathcal{I}} 
   ( \TP_{ij_1k_1} - \TP_{ij_2k_2} ) \\
   & = 2 \max_{\mathcal{I}\subset[n]}\Big( 1- \sum_{i\notin\mathcal{I}} \TP_{ij_1k_1} - \sum_{i\in\mathcal{I}} \TP_{ij_2k_2} 
   \Big)  = 2 - 2\min_{\mathcal{I}\subset[n]}\Big( \sum_{i\notin\mathcal{I}} \TP_{ij_1k_1} + \sum_{i\in\mathcal{I}} \TP_{ij_2k_2} 
   \Big) .
\end{align*}
Therefore
\begin{align*}
   1 - \frac12 \max_{j_1,j_2,k_1,k_2} 
   \| \TP e_{j_1}e_{k_1} - \TP e_{j_2}e_{k_2} \|_1 
   & =  \frac12 \min_{j_1,j_2,k_1,k_2} 
   \big( 2 - \| \TP e_{j_1}e_{k_1} - \TP e_{j_2}e_{k_2} \|_1 \big) \\
   & = \min_{j_1,j_2,k_1,k_2} \min_{\mathcal{I}\subset[n]}
   \Big( \sum_{i\notin\mathcal{I}} \TP_{ij_1k_1} + \sum_{i\in\mathcal{I}} \TP_{ij_2k_2} 
   \Big) , 
\end{align*}
which coincides with \eqref{eq:delta3}, after rearranging terms.
\end{proof}

\begin{figure}[t]
    \centering
    \includegraphics[width=\textwidth]{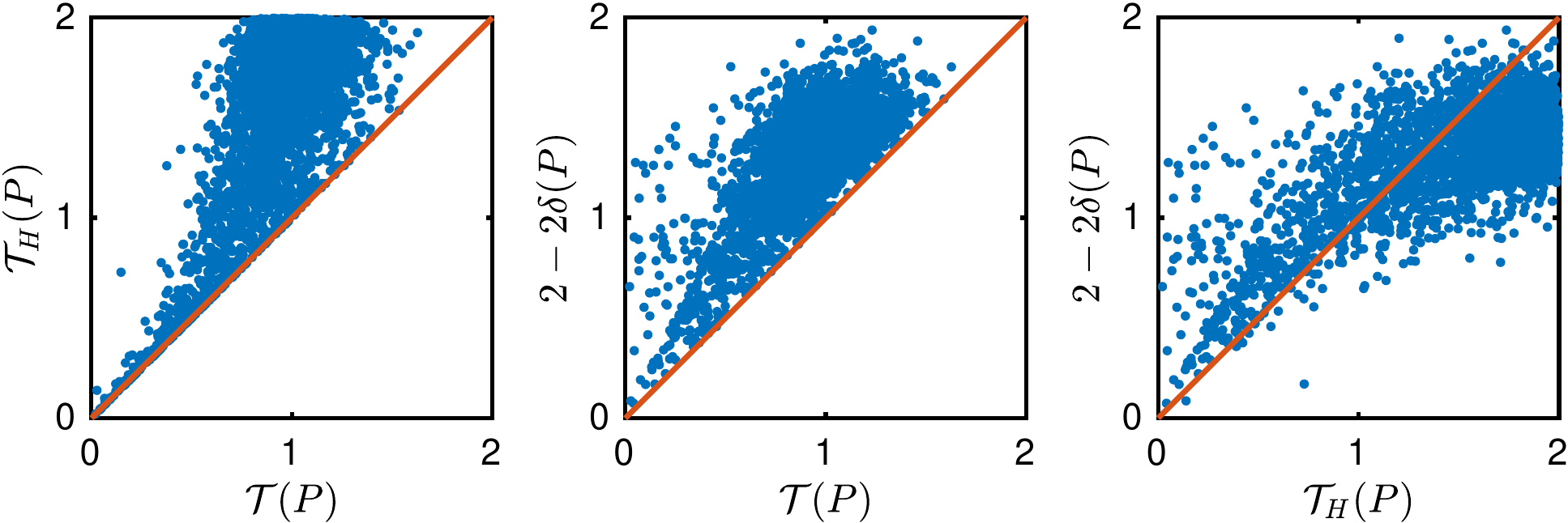}
    \caption{Scatter plot of different coefficients over 10,000 random $n\times n \times n$ stochastic tensors $\TP$ with size $n$ chosen uniformly at random within $\{2,\dots,10\}$. }
    \label{fig:compare_taus}
\end{figure}

Using the characterization of $\delta(\TP)$ in the preceding lemma, the following theorem compares $\delta(\TP)$ and $\gamma(\TP)$ with the higher-order ergodic coefficient $\Tau(\TP)$:
\begin{theorem}
Let $\TP\in\R^{[3,n]}$ be stochastic. 
% let $\hat \delta(\TP) = \frac12 (2 - \Tau(\TP))$.
Then $\Tau(\TP) \leq 2 - 2\delta(\TP)$. Moreover, if
$\TP = \TP^S$ then $2 - \gamma(\TP) \leq \Tau(\TP)$.
\end{theorem}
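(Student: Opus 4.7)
My plan is to massage both sides of each inequality into forms that can be compared directly using the explicit formulas for $\Tau(\TP)$, $\delta(\TP)$ and $\gamma(\TP)$ that have already been established. Throughout I will use that for a stochastic $\TP$, by \eqref{eq:Tau},
$$
    \Tau(\TP) \;=\; \tfrac{1}{2}\max_{j,k_1,k_2} \bigl\|\TP e_j e_{k_1} - \TP e_j e_{k_2} + \TP e_{k_1}e_j - \TP e_{k_2}e_j\bigr\|_1,
$$
and that Lemma \ref{lem:chardelta} rewrites $2-2\delta(\TP) = \max_{j_1,j_2,k_1,k_2}\|\TP e_{j_1}e_{k_1} - \TP e_{j_2}e_{k_2}\|_1$.

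For the first inequality $\Tau(\TP)\le 2-2\delta(\TP)$, I would regroup the four vectors inside the norm above as the pair of mixed differences
$$
    \bigl(\TP e_j e_{k_1} - \TP e_{k_2} e_j\bigr) \;+\; \bigl(\TP e_{k_1} e_j - \TP e_j e_{k_2}\bigr) .
$$
The triangle inequality then bounds $\Tau(\TP)$ by $\tfrac{1}{2}$ times the sum of two $\ell^1$-norms, each of the shape $\|\TP e_a e_b - \TP e_c e_d\|_1$. By Lemma \ref{lem:chardelta} each such norm is bounded by $2-2\delta(\TP)$, so $\Tau(\TP)\le \tfrac{1}{2}\cdot 2\cdot(2-2\delta(\TP)) = 2-2\delta(\TP)$.

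For the second inequality, assuming $\TP=\TP^S$, I would first exploit the $S$-symmetry to collapse the two bracketed terms in the definition \eqref{eq:gamma} of $\gamma$. Since $\TP_{ijk}=\TP_{ikj}$, relabeling $j\leftrightarrow k$ turns the first bracket into the second, so
$$
    \gamma(\TP) \;=\; 2\min_{\II\subset[n]}\min_j\Bigl(\min_{k\in \II}\sum_{i\notin\II}\TP_{ijk} + \min_{k\notin\II}\sum_{i\in\II}\TP_{ijk}\Bigr).
$$
On the other hand, $S$-symmetry gives $\Tau(\TP)=2\Tau_L(\TP)$, and \eqref{eq:TauLs2} yields
$$
    \Tau(\TP) \;=\; 2 - 2\min_{\II\subset[n]}\min_j\Bigl(\min_{k_1}\sum_{i\notin\II}\TP_{ijk_1} + \min_{k_2}\sum_{i\in\II}\TP_{ijk_2}\Bigr) .
$$
The two inner minima over $k_1,k_2$ in the $\Tau$-expression range over all of $[n]$, whereas the corresponding minima in the $\gamma$-expression are restricted to $\II$ and its complement; minimizing over a larger set produces a smaller value, so $\Tau(\TP)\ge 2-\gamma(\TP)$.

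I do not anticipate a real obstacle beyond bookkeeping. The one delicate point is the regrouping in the triangle inequality for the first inequality: one must pair the four terms so that each resulting difference is of the "different $j$ and different $k$" form bounded by $\delta$, which is precisely what the mixed pairing $(\TP e_j e_{k_1}-\TP e_{k_2} e_j) + (\TP e_{k_1} e_j-\TP e_j e_{k_2})$ achieves.
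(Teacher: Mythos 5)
Your proof is correct and follows essentially the same route as the paper's: both parts reduce to the explicit formulas, with the first inequality obtained from the triangle inequality plus Lemma \ref{lem:chardelta} and the second from comparing restricted versus unrestricted inner minima in the $S$-symmetric forms of $\gamma(\TP)$ and $\Tau(\TP)$. The only cosmetic difference is your ``mixed'' pairing of the four terms; the naive pairing $(\TP e_j e_{k_1}-\TP e_j e_{k_2})+(\TP e_{k_1}e_j-\TP e_{k_2}e_j)$ works just as well, since the maximum in Lemma \ref{lem:chardelta} ranges over all four indices independently (coincident ones included), which is in effect what the paper exploits via $\Tau(\TP)\le\Tau_L(\TP)+\Tau_R(\TP)\le 2\max\{\Tau_L(\TP),\Tau_R(\TP)\}\le 2-2\delta(\TP)$.
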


\begin{proof}
The formulas \eqref{eq:TauL} and \eqref{eq:TauR} can be rewritten 
as
$$
    \Tau_L(\TP) = \frac12 \max_{j,k_1,k_2} \| \TP e_j(e_{k_1}-e_{k_2}) \|_1 , \qquad
    \Tau_R(\TP) = \frac12 \max_{j_1,j_2,k} \| \TP (e_{j_1}-e_{j_2})e_k \|_1 ,
$$
respectively. Using the preceding formulas and \eqref{eq:TauLR}
it is immediate to obtain 
\begin{align*}
   \max_{j_1,j_2,k_1,k_2} \| \TP e_{j_1}e_{k_1} - \TP e_{j_2}e_{k_2} \|_1 
    \geq 2 \max\{ \Tau_L(\TP) , \Tau_R(\TP)\}  \geq \Tau_L(\TP) + \Tau_R(\TP)
   \geq \Tau(\TP) .
\end{align*}
From Lemma \ref{lem:chardelta} we conclude $1 - \delta(\TP) \geq  \frac12 \Tau(\TP)$
and this proves the first part of the claim.
Furthermore, using the symmetry $\TP = \TP^S$, the formulas 
\eqref{eq:gamma} and \eqref{eq:Taus2} simplify to
\begin{align*}
    \gamma(\TP) & =  2 \min_{\II\subset [n]} 
    \min_{j}\Big( \min_{k\in \II} \sum_{i\not\in \II}\b P_{ijk}+
    \min_{k\not\in \II}\sum_{i\in \II}\b P_{ijk} \Big) \\
   \Tau(\TP) & = 2 - 2 \min_{\II\subset[n]} \min_j \Big(
   \min_{k} \sum_{i\in\II} \TP_{ijk} 
   + \min_{k} \sum_{i\notin\II} \TP_{ijk} \Big) .
\end{align*}
The inequality $2 - \gamma(\TP) \leq \Tau(\TP)$ follows, and the proof is complete.
\end{proof}

We conclude with several important remarks that we obtain as a consequence of the preceding results.

First, notice that the requirement $\delta(\TP) > \frac12$ appearing in Theorem \ref{thm:Li+NLAA13}
is stronger than the one of Theorem \ref{thm:cond3}, 
namely, if $\delta(\TP) > \frac12$ holds then 
$\Tau(\TP) < 1$ must hold as well. Moreover, 
$2\delta(\TP) - 1 \geq 1 - \Tau(\TP)$. Thus the right hand side of Theorem \ref{thm:Li+NLAA13} is larger than the one of Theorem \ref{thm:cond3}. This shows that %the former theorem is larger than that of the latter. Hence, 
Theorem \ref{thm:cond3}  is an improvement over Theorem \ref{thm:Li+NLAA13}.

On the other hand, the condition $\gamma(\TP) > 1$ is weaker than
$\Tau(\TP) < 1$. 
Hence, the hypothesis in Theorem \ref{thm:LiNg}
ensuring uniqueness of 
the solution of $x = \TP xx$
and convergence of the higher-order
power method can be more general than the one in Theorem \ref{thm:ergodic_thm_tensor}, at least when $\TP = \TP^S$. 
% On the other hand, 
Additionally,
it is important to point out that the inequality $\Tau(\TP) < 1$ can be checked using $\mathcal{O}(n^4)$ arithmetic operations,
while the computation of $\gamma(\TP)$ requires the solution of a nontrivial combinatorial optimization problem, which is in general significantly more expensive.

The central and the rightmost
panels of Figure \ref{fig:compare_taus} compare numerically, via scatter plots, the condition  $2-2\delta(\TP)<1$ and the ergodicity conditions $\Tau(\b P)<1$ and $\Tau_H(\b P)<1$ obtained via the higher-order ergoditicity coefficients, on a test set of $10,000$ randomly generated tensors with varying size.

% ---------------------------------------------------
\section{Examples}   \label{sec:Z-eig-examples}
% ---------------------------------------------------

We conclude with a number of example applications of 
Theorem \ref{thm:ergodic_thm_tensor}.
The examples here below further demonstrate   
the usefulness of the newly introduced higher-order ergodicity coefficients
in a variety of contexts.

% ---------------------------------------------------
\subsection{Multilinear PageRank}
% ---------------------------------------------------

Given a stochastic tensor $\b P$, a scalar $0<\alpha<1$ and a probability vector $v\in \SS$, the  multilinear PageRank is a solution of the equation
\begin{equation}   \label{eq:mlpr}
   \alpha\b Pxx + (1-\alpha)v = x\, .
\end{equation}
This definition has been introduced by Gleich, Lim, and Yu \cite{MLPR} 
in analogy to the renowned  Google's PageRank vector, defined as the solution of $\alpha Px+(1-\alpha)v=x$ where
$P$ is a stochastic transition probability matrix. 
Pursuing that analogy, the solution of \eqref{eq:mlpr} gives the stationary probability of a stochastic process that with probability $\alpha$ behaves like the second-order Markov chain \eqref{eq:PM2} 
and with probability $1-\alpha$ teleports to a random state chosen according to the discrete density $v$.

A detailed analysis of the possibly multiple nonnegative solutions to \eqref{eq:mlpr} is provided by Meini and Poloni in \cite{MeiPol18}. 
They also discuss various first- and second-order iterative methods to compute a solution to  \eqref{eq:mlpr}. In particular,  fixed-point type methods are often a choice of preference, due to their inexpensive iterations and simple implementation. Also, these types of methods can be easily extrapolated achieving fast converge rates, see \cite{cipolla2019extrapolation}.  However, in practice one is interested in values of $\alpha$ not too far from $1$ but, unlike the matrix case, requiring $\alpha<1$ is not enough to ensure the uniqueness of the multilinear PageRank nor the convergence of the fixed-point iterates. In the original paper \cite{MLPR}, the condition $\alpha < 1/2$ is proved to be sufficient to ensure both these properties \eqref{eq:mlpr}. 
More recently, a tighter sufficient condition for the uniqueness of the multilinear PageRank has been proved by Li et al. \cite{li2017uniqueness}, in terms of the following quantity,
$$
  \theta(\TP,\sigma) = 
   \max_{j,k_1,k_2} \sum_i |\TP_{ijk_1} - \sigma_i| 
   + |\TP_{ik_2j} - \sigma_i| ,
$$
where $\sigma$ is any real vector. Precisely, Theorems 1 and 2 in \cite{li2017uniqueness} show that if there exists $\sigma\in \mathbb R^n$ such that $\alpha\, \theta(\TP,\sigma)<1$, then \eqref{eq:mlpr} has a unique nonnegative solution and the fixed-point iteration for \eqref{eq:mlpr} converges to such a solution. 

Theorem \ref{thm:ergodic_thm_tensor} provides a new condition that 
improves the range of values of $\alpha$ for which we can guarantee both the  uniqueness of a nonnegative  solution of \eqref{eq:mlpr} and the convergence of the associated fixed point iteration, as shown by the following result.

\begin{corollary}   \label{cor:mlPR}
If $\alpha\Tau(\TP) < 1$ then \eqref{eq:mlpr} has a unique solution $x\in\SS$. 
Moreover, the fixed point iteration $x_{t+1} = \alpha\TP x_tx_t + (1-\alpha) v$
converges linearly to $x$, with a convergence rate of at least $\alpha\Tau(\TP)$.
Finally, it holds $\| x - v \|_1 \leq 2\alpha$.
\end{corollary}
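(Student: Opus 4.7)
The plan is to reduce \eqref{eq:mlpr} to a pure $Z$-eigenvector problem for an auxiliary stochastic tensor, so that Theorem \ref{thm:ergodic_thm_tensor} applies directly. Concretely, I would define
$$
   \widetilde{\TP}_{ijk} \;=\; \alpha\,\TP_{ijk} + (1-\alpha)\,v_i,
$$
and first check that $\widetilde{\TP}$ is a stochastic tensor in $\R^{[3,n]}$: its entries are nonnegative, and $\sum_i \widetilde{\TP}_{ijk} = \alpha + (1-\alpha)\uno^T v = 1$ for every $j,k$. For $x\in\SS$, one has $\widetilde{\TP}xx = \alpha\TP xx + (1-\alpha)v(\uno^Tx)(\uno^Tx) = \alpha\TP xx + (1-\alpha)v$, so the multilinear PageRank equation \eqref{eq:mlpr} is exactly $\widetilde{\TP}xx = x$.

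Next, I would compute $\Tau(\widetilde{\TP})$ in terms of $\Tau(\TP)$. The key observation is that if $y\in\SZ$ then $\uno^Ty = 0$, so the teleportation term disappears: for any $x\in\SS$ and $y\in\SZ$,
$$
   \widetilde{\TP}xy \;=\; \alpha\,\TP xy + (1-\alpha)v(\uno^Tx)(\uno^Ty) \;=\; \alpha\,\TP xy,
$$
and analogously $\widetilde{\TP}yx = \alpha\TP yx$. Substituting into the definition \eqref{eq:TauDef} of $\Tau$ yields $\Tau(\widetilde{\TP}) = \alpha\,\Tau(\TP)$.

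At this point the hypothesis $\alpha\Tau(\TP) < 1$ becomes $\Tau(\widetilde{\TP}) < 1$, and Theorem \ref{thm:ergodic_thm_tensor} applied to $\widetilde{\TP}$ delivers both a unique $x\in\SS$ with $\widetilde{\TP}xx = x$ (i.e., a unique solution of \eqref{eq:mlpr}) and the linear convergence of $x_{t+1} = \widetilde{\TP}x_tx_t = \alpha\TP x_tx_t + (1-\alpha)v$ with rate at most $\Tau(\widetilde{\TP}) = \alpha\Tau(\TP)$. Finally, the bound $\|x - v\|_1 \leq 2\alpha$ follows immediately by rewriting \eqref{eq:mlpr} as $x - v = \alpha(\TP xx - v)$ and applying the triangle inequality together with $\|\TP xx\|_1 = \|v\|_1 = 1$.

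There is really no substantial obstacle here: the only subtle point is noticing that the constant teleportation term is annihilated by the zero-sum constraint defining $\SZ$, which is precisely what turns the bound into a clean multiplicative factor $\alpha$ in front of $\Tau(\TP)$. Once that is seen, everything else is an immediate consequence of Theorem \ref{thm:ergodic_thm_tensor} and a one-line estimate.
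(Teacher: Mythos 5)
Your proposal is correct and follows essentially the same route as the paper: both define the shifted stochastic tensor $\alpha\TP+(1-\alpha)\b V$ with $\b V_{ijk}=v_i$, recast \eqref{eq:mlpr} as a $Z$-eigenvector problem for it, and invoke Theorem \ref{thm:ergodic_thm_tensor}. The only (harmless) differences are that you obtain the exact identity $\Tau(\widetilde{\TP})=\alpha\Tau(\TP)$ by direct computation using $\uno^Ty=0$ for $y\in\SZ$, where the paper settles for the inequality $\Tau(\TP_\alpha)\leq\alpha\Tau(\TP)$ via subadditivity and $\Tau(\b V)=0$, and that you prove $\|x-v\|_1\leq 2\alpha$ by the one-line estimate $x-v=\alpha(\TP xx-v)$ rather than by appealing to the perturbation bound of Theorem \ref{thm:cond3}; both variants are slightly more direct but not substantively different.
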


\begin{proof}
Let $\b V\in\R^{[3,n]}$ be the rank-one tensor 
$\b V_{ijk} = v_i$. 
Since $\b V xx = v$ for any $x\in\SS$,
the equation \eqref{eq:mlpr} can rewritten as $x = \TP_\alpha xx$ where
\begin{equation}\label{eq:Palpha}
   \TP_\alpha = \alpha \TP + (1-\alpha) \b V .
\end{equation}
By Theorem \ref{thm:ergodic_thm_tensor}, the condition $\Tau(\TP_\alpha) < 1$ guarantees uniqueness of the solution and convergence of the fixed point iteration. However,  
$
   \Tau(\TP_\alpha) \leq \alpha\Tau(\TP) + (1-\alpha)\Tau(\b V )
   = \alpha\Tau(\TP) ,
$  
due to the fact that $\Tau(\b V ) = 0$, as noted in Remark \ref{rem:rk1}.

Finally, note that the vector $v$ is characterized by the identity $v = \b V vv = \TP_0 vv$.
Hence, by considering $\TP_\alpha$ as a perturbation of $\b V = \TP_0$,
from Theorem \ref{thm:cond3} we get 
$$
   \| x - v \|_1 \leq \frac{1}{1 - \Tau(\b V)} \| \TP_\alpha - \b V \|_1
   = \alpha \| \TP - \b V \|_1 
   \leq 2\alpha
$$
since $\Tau(\b V) = 0$, and the proof is complete.
\end{proof}

Note that the condition for the uniqueness given by Corollary \ref{cor:mlPR} is always an improvement with respect to the one of \cite{li2017uniqueness}. In fact, using the formula \eqref{eq:Tau} for $\Tau(\TP)$, for any $\sigma \in \mathbb R^n$ we have 
\begin{align*}
   2\Tau(\TP) = & 
   \max_{j,k_1,k_2} \sum_i |\TP_{ijk_1} - \TP_{ijk_2} 
   + \TP_{ik_1j} - \TP_{ik_2j} + 2 \sigma_i - 2 \sigma_i | \\
   \leq & \max_{j,k_1,k_2} \sum_i |\TP_{ijk_1} - \sigma_i | + 
   |\TP_{ijk_2} - \sigma_i | + |\TP_{ik_1j} - \sigma_i| + 
   | \TP_{ik_2j} - \sigma_i | \\
   \leq & \Big[ \max_{j,k_1,k_2} \sum_i 
   |\TP_{ijk_1} - \sigma_i | + |\TP_{ik_2j} - \sigma_i | 
   \Big] + \Big[ \max_{j,k_1,k_2} \sum_i 
   |\TP_{ijk_2} - \sigma_i | + |\TP_{ik_1j} - \sigma_i | 
   \Big] \\
   {} = {} & 2 \theta(\TP,\sigma) .
\end{align*}

\begin{figure}[t]\label{fig:mlpr_1}
\centering
\includegraphics[width=.57\textwidth]{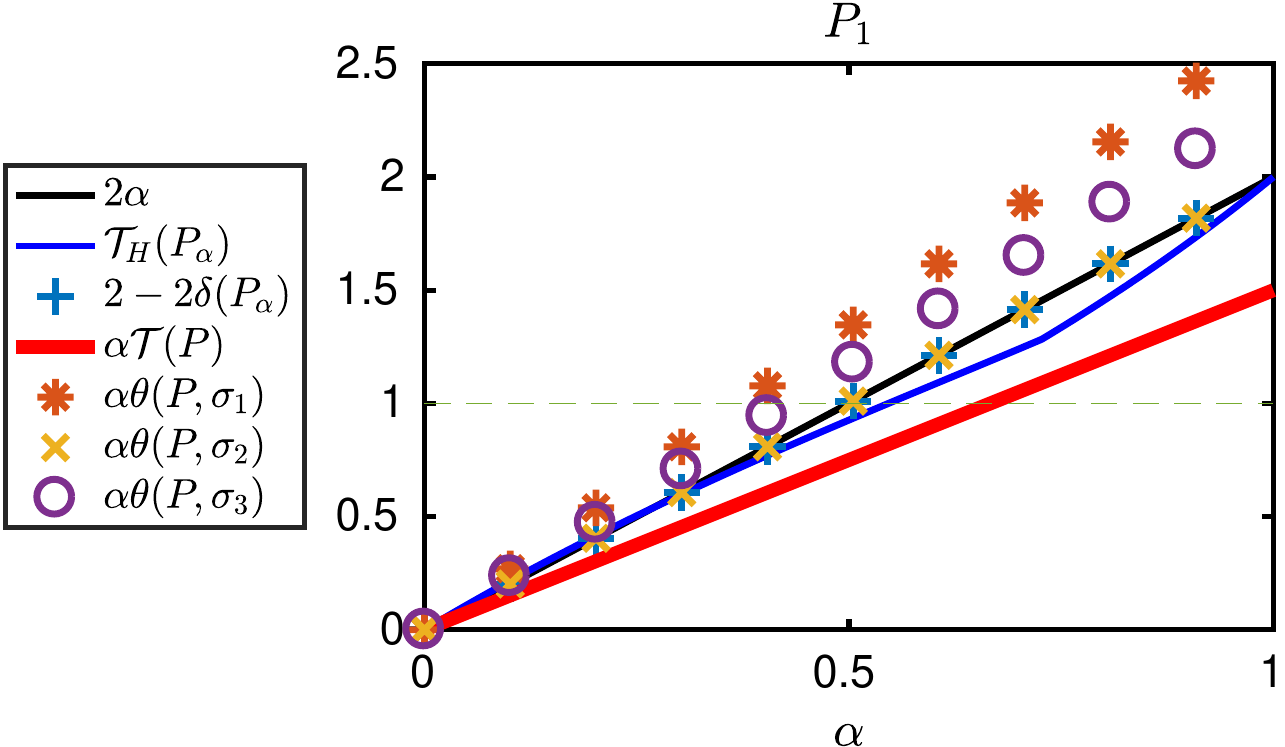}\includegraphics[width=.42\textwidth]{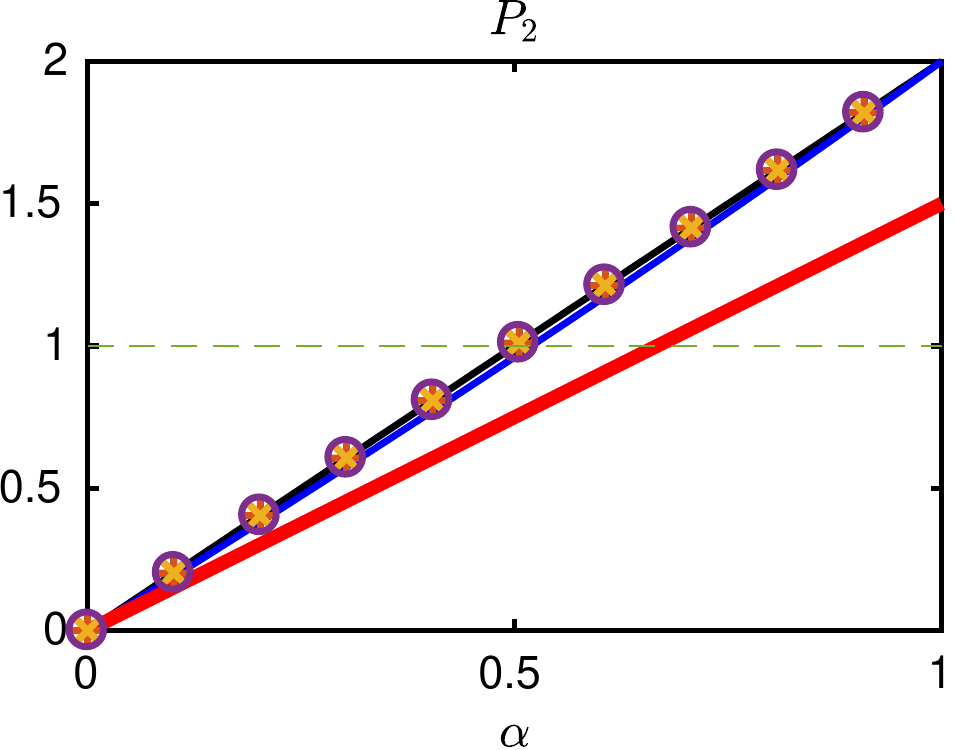}
\caption{This figure compares the results in \cite[Thm.\ 5.1]{MLPR}, Corollary \ref{cor:BH}, Theorem \ref{thm:ergodic_thm_tensor}, \cite[Cor.\ 1 \& 2]{li2017uniqueness} by comparing the values of $2\alpha$, $\Tau_H(\TP_\alpha)$, $2-2\delta(\TP_\alpha)$, $\alpha \Tau(\TP)$, $\alpha\theta(\TP,\sigma_k)$, $k=1,2,3$,   where $\TP_\alpha$ is defined as in \eqref{eq:Palpha},  the vectors $\sigma_1,\sigma_2,\sigma_3$ defined as in  \eqref{eq:sigmas} and  $\TP$ is either of the example tensors $\TP_1$ and $\TP_2$ of \eqref{eq:example_tensors}.  }
\end{figure}

In order to illustrate how the various conditions differ in practice, we consider two small example tensors  borrowed from
\cite{MLPR}
\begin{align}
\begin{aligned}\label{eq:example_tensors}
    \TP_1 &= \left[\begin{bmatrix}1/3 & 1/3 & 1/3\\1/3 & 1/3 & 1/3\\1/3 & 1/3 & 1/3 \end{bmatrix} \begin{bmatrix}1/3 & 0 & 0\\1/3 & 0 & 1/2\\1/3 & 0 & 1/2\end{bmatrix}\begin{bmatrix} 0&0&0\\ 1&0&1 \\ 0&1&0 \end{bmatrix}  \right]\, ,\\
    \TP_2 &= \left[\begin{bmatrix}
    0 & 0 & 1/3\\0 & 0 & 1/3\\1 & 1 & 1/3 \end{bmatrix} \begin{bmatrix}1/3 & 0 & 0\\1/3 & 0 & 0\\1/3 & 1 & 1\end{bmatrix}\begin{bmatrix} 1/2 & 1/2 & 1/2\\ 0 & 0 & 1/2 \\ 1/2 & 1/2 & 0 \end{bmatrix}  \right] .
    \end{aligned}
\end{align}

Figure \ref{fig:mlpr_1} compares the range of values of $\alpha$ that  guarantee uniqueness of the multilinear PageRank and convergence of the corresponding fixed-point iteration  for the two tensors  $\TP_1$ and $\TP_2$, according to the original Theorem 5.1 in \cite{MLPR}, Corollary \ref{cor:BH},  Theorem \ref{thm:ergodic_thm_tensor} and Theorem 1 in \cite{li2017uniqueness}. For the latter result, we show the value of the quantities  $\alpha\,  \theta(\TP,\sigma_k)$, $k=1,2,3$ obtained with the three choices of vectors% $\sigma_k\in \mathbb R^n$  given by
\begin{equation}\label{eq:sigmas}
    (\sigma_1)_i = \max_{jk}\TP_{ijk}, \quad (\sigma_2)_i = \min_{jk}\TP_{ijk}, \quad \sigma_3 = \frac{\sigma_1+\sigma_2}2\, ,
\end{equation}
as proposed in Corollaries 1 and 2 in the same paper.
The interesting ranges are those where the corresponding graphs stay below the dashed line.

\subsection{Triangle-based PageRank on networks}

Random walks  are an important tool for exploratory network analysis. For example, they are at the basis of widely used methods for local clustering, link prediction and network centrality. 
The typical random walk on a network is a Markov process where the probability to move from a node $i$ to a node $j$ is proportional to the number of outgoing edges leaving from $i$. 
This classical first-order process only takes into account pairwise node-node relationships. However, recent work has highlighted that many important network features arise by exploiting the interaction of larger groups of nodes acting together, see e.g., \cite{AHTproceedings,Benson2015TensorSC}. 
In order to account for this type of second-order node interaction, we can consider a second-order stochastic process on the network, where the
probability to move to a node $i$ depends on the number of triangles that point towards $i$. 
We show in this section how the higher-order ergodicity coefficients for stochastic tensors help dealing with triangle-based random walks.

Let  $G=(V,E)$ be an undirected graph, with $V = \{1,\dots,n\}$, and consider the tensor
$$ 
   \b T_{ijk}  = \begin{cases} \frac{1}{\triangle(j,k)} & \text{if $i,j,k$ form a triangle in $G$} \\
   0 & \text{otherwise,} \end{cases}
$$ 
where $\triangle(j,k)$ is the number of triangles that contain both nodes $j$ and $k$. This tensor is the triangle-based version of the transition matrix of the standard random walk in $G$,
$$
   A_{ij} = \begin{cases}
   \frac 1 {d(j)} & \text{if $i,j$ form an edge in $G$} \\
   0 & \text{otherwise,} \end{cases}
$$
where $d(i) = \sum_j A_{ij}$ is the degree of node $i$. Clearly, $\b T$ has many vanishing columns as in general two nodes $j,k\in V$ may not participate in any triangle in $G$. 
In that case, we set $\b T_{ijk} = 1/n$ for all $i = 1,\ldots,n$. Similarly, we set $A_{ij}=1/n$ for all $i=1,\dots,n$ if $j$ is an isolated node in $G$ (i.e.\ if the $j$-th column of $A$ is zero). 

Now, define the tensor $\b A$ as $\b A_{ijk} = A_{ij}$ and, for $\beta\in[0,1]$, consider the stochastic tensor 
\begin{equation}   \label{eq:trianglebasedtensor}
   \TP = \beta \b T + (1-\beta) \b A .
\end{equation}
This construction has been considered for example in \cite{AHTproceedings,MLPR}, within the multilinear PageRank equation \eqref{eq:mlpr}, in order to combine the standard and the triangle-based random walks on real-world networks. % to identify (....)
The next result specializes Theorem \ref{thm:ergodic_thm_tensor} to the multilinear PageRank problem associated with the tensor in \eqref{eq:trianglebasedtensor} and, additionally, provides a bound on the distance between the solution to that problem and the standard PageRank vector. 

\begin{corollary}   \label{cor:mlPRtriangle}
Let $\TP$ be defined as in \eqref{eq:trianglebasedtensor} and let $\gamma = \alpha(1+\beta)$.
If $\gamma < 1$ then \eqref{eq:mlpr} has a unique solution $x\in\SS$, 
and the fixed point iteration $x_{t+1} = \alpha\TP x_tx_t + (1-\alpha) v$
converges linearly to $x$, with a convergence rate of at least $\gamma$.

Moreover, let $z\in\SS$ be the solution of the ordinary PageRank problem corresponding to the transition matrix $A$ and the teleportation vector $v$,
\begin{equation}  \label{eq:PRclassic}
   z = \alpha A z + (1-\alpha) v .
\end{equation}
Then,
$$
   \| x - z \|_1 \leq 
   \frac{\alpha\beta}{1-\alpha} \| \b T - \b A\|_1 .
$$
\end{corollary}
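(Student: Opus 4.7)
The plan is to derive both claims as straightforward consequences of results already established earlier in the paper. The first part will follow from Corollary \ref{cor:mlPR} once we show the bound $\Tau(\TP) \leq 1+\beta$, so that $\alpha\Tau(\TP) \leq \alpha(1+\beta) = \gamma < 1$. The second part will follow from the perturbation result Theorem \ref{thm:cond3}, once we recognize that the ordinary PageRank vector $z$ is itself the stochastic $Z$-eigenvector of a multilinear PageRank tensor differing from $\TP_\alpha = \alpha\TP + (1-\alpha)\b V$ by the quantity $\alpha\beta(\b T - \b A)$, where $\b V_{ijk} = v_i$.

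For the bound $\Tau(\TP) \leq 1+\beta$, I would first invoke the triangle inequality applied to \eqref{eq:TauDef}, which gives the sub-additivity $\Tau(\beta\b T + (1-\beta)\b A) \leq \beta\,\Tau(\b T) + (1-\beta)\,\Tau(\b A)$. The estimate $\Tau(\b T) \leq 2$ is immediate from \eqref{eq:TauLR} because $\b T$ is stochastic. The estimate $\Tau(\b A) \leq 1$ uses the special structure $\b A_{ijk} = A_{ij}$: for any $x\in\SS$ and $y\in\SZ$ we have $\b A xy = 0$ because $\sum_k y_k = 0$, and $\b A yx = Ay$ because $\sum_k x_k = 1$. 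Therefore $\Tau(\b A) = \max_{y\in\SZ}\|Ay\|_1 = \tau_1(A) \leq 1$. Combining yields $\Tau(\TP) \leq 2\beta + (1-\beta) = 1+\beta$, and Corollary \ref{cor:mlPR} completes the first claim with convergence rate $\alpha\Tau(\TP) \leq \gamma$.

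For the second claim, the key observation is that the same identity $\b A zz = Az$ (valid for any $z \in \SS$, by the computation above) lets us rewrite \eqref{eq:PRclassic} as $z = \alpha\b Azz + (1-\alpha)v = \b A_\alpha zz$, where $\b A_\alpha = \alpha\b A + (1-\alpha)\b V$. Since $\Tau(\b V) = 0$ by Remark \ref{rem:rk1}, we have $\Tau(\b A_\alpha) \leq \alpha\Tau(\b A) \leq \alpha < 1$, so Theorem \ref{thm:cond3} applies with $\b A_\alpha$ as the base tensor and $\TP_\alpha$ as the perturbation. Writing out the numerator, $\TP_\alpha - \b A_\alpha = \alpha(\TP - \b A) = \alpha\beta(\b T - \b A)$, so
\[
   \|x - z\|_1 \leq \frac{\|\TP_\alpha - \b A_\alpha\|_1}{1 - \Tau(\b A_\alpha)} \leq \frac{\alpha\beta\,\|\b T - \b A\|_1}{1 - \alpha},
\]
which is the claimed bound. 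There is no real obstacle here; the only non-routine step is the structural observation that casts the classical PageRank equation in the multilinear form, so that the tensor perturbation machinery becomes applicable.
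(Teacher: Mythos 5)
Your proof is correct and follows essentially the same route as the paper: the bound $\Tau(\TP)\leq 1+\beta$ (via $\Tau(\b A)=\tau_1(A)\leq 1$) combined with Corollary \ref{cor:mlPR} for the first claim, and an application of Theorem \ref{thm:cond3} to the pair $\b A_\alpha = \alpha\b A+(1-\alpha)\b V$ and $\TP_\alpha$ — which are exactly the paper's $\TP_{\alpha,0}$ and $\TP_{\alpha,\beta}$ — for the second. You supply slightly more detail than the paper (the explicit computation $\b Axy=0$, $\b Ayx=Ay$ and the subadditivity step), but the substance is identical.
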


\begin{proof}
Observe that, as $\Tau(\b A) = \tau_1(A) \leq 1$,
we have the trivial upper bound
$\Tau(\TP) \leq \beta + 1$,
hence the first part of the claim follows from Corollary \ref{cor:mlPR}.
Moreover, simple passages allow us to recast $x$ as the stochastic solution of the equation 
$x = \TP_{\alpha,\beta} xx$ where
$$
   \TP_{\alpha,\beta} = \alpha\beta \b T + \alpha(1-\beta)\b A + (1-\alpha) \b V 
$$
and $\b V$ is the rank-one tensor $\b V_{ijk} = v_i$.
Analogously, the vector $z$ can also be considered as the stochastic solution of
$
   z = (\alpha \b A + (1-\alpha) \b V) zz
$, 
that is $z = \TP_{\alpha,0} zz$.
We have $\Tau(\TP_{\alpha,0}) \leq \alpha$, 
hence from Theorem \ref{thm:cond3} we get
$$
   \| x - z \|_1 \leq \frac{1}{1 - \alpha}
   \| \TP_{\alpha,\beta} - \TP_{\alpha,0} \|_1 
   = \frac{\alpha\beta}{1 - \alpha}
   \| \b T - \b A \|_1 ,
$$
which completes the proof.
\end{proof}

Together with Corollary \ref{cor:mlPR},
the previous result shows that small values of $\alpha$ and $\beta$  produce a multilinear PageRank vector that does not differ sensibly from the ordinary PageRank vector.  
Note that the uniqueness and convergence condition $\gamma < 1$ in Corollary \ref{cor:mlPRtriangle}
can be fulfilled by any value $\alpha < 1/(1+\beta)$. This condition is evidently less restrictive than the better known inequality $\alpha < 1/2$. This is one of several implications of Corollary \ref{cor:mlPRtriangle}. Below we consider an example  real-wold network to further showcase the advantages of that corollary.

\begin{figure}[t]
    \centering
    \includegraphics[width=\textwidth,clip,trim=1.5cm .1cm  1cm .5cm]{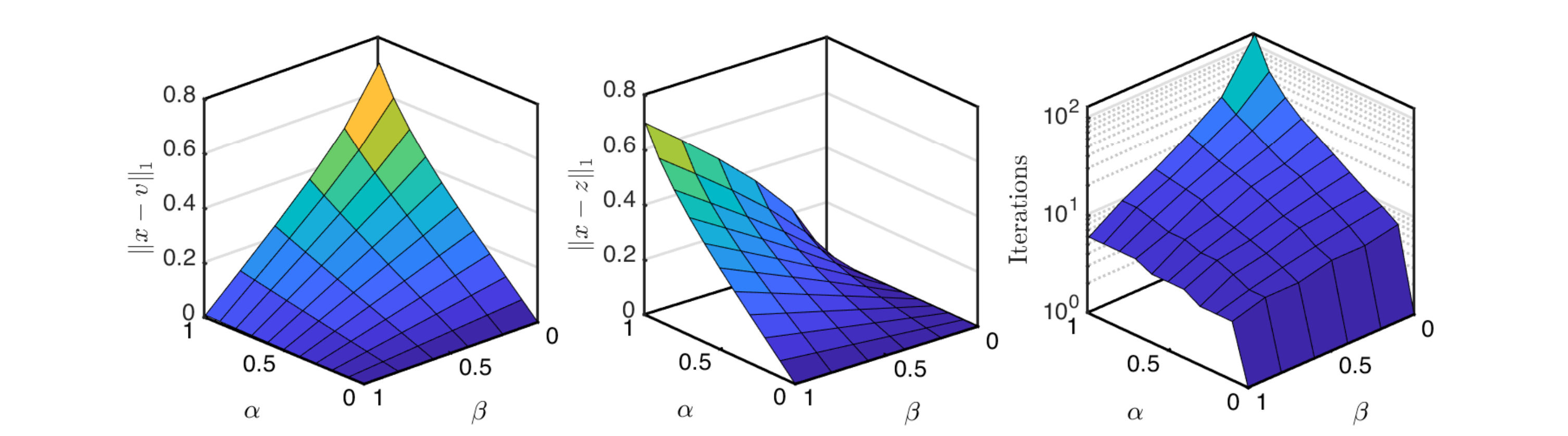}
    \caption{Triangle-based PageRank analysis on the {\tt socfb-Carnegie49} Facebook network, with varying $\alpha$ and $\beta$. %Results from the {\tt socfb-Carnegie49} network with varying $\alpha$ and $\beta$. 
    Left to right: $\| x - v\|_1$, $\| x - z\|_1$, and number of iterations $x_{t+1}=\b P_{\alpha,\beta}x_t x_t$ to reach  $\|x_{t+1}-x_t\|<10^{-8}$.}
    \label{fig:Carnegie49}
\end{figure}

The {\tt socfb-Carnegie49} network is a Facebook graph considered for example in the study of the social structure of Facebook users \cite{traud2012social},
and available online on NetworkRepository \cite{NetworkRepository}. The graph has $6637$ nodes and $249967$ undirected edges. 
The largest connected component consists of $6621$ vertices, and the triangle tensor $\b T$ has $13860318$ nonzero entries.
Figure \ref{fig:Carnegie49}
shows the results of a number of multilinear PageRank problems with coefficients $\alpha$ and $\beta$ varying in $[0,1]$. The equation $x = \alpha \b Pxx + (1-\alpha)v$, with $\TP = \beta \b T + (1-\beta) \b A$ as in \eqref{eq:trianglebasedtensor} and with uniform teleportation vector $v = \uno/n$
has been solved via the fixed point iteration $x_{t+1} = \alpha \b Px_tx_t + (1-\alpha)v$
endowed with the stopping criterion $\|x_{t+1}-x_t\|_1 < 10^{-8}$.

The leftmost panel in Figure \ref{fig:Carnegie49} shows the distance $\|x-v\|_1$, whereas the central panel shows $\|x-z\|_1$ where $z$ is the usual PageRank vector, defined as in \eqref{eq:PRclassic}, with the same $\alpha$ value chosen for the multilinear version. The iteration number to convergence is shown in the rightmost panel. 

While the overall behavior of $\|x-z\|_1$ reflects the estimate in Corollary \ref{cor:mlPRtriangle},
the panel on the left shows that $x$ approaches $v$ not only when $\alpha \approx 0$ but also when $\beta$ is large. 
This is due to the fact that
the triangle tensor $\b T$ has many zero columns. 
Consequently, the vast majority of the columns of $\TP$ coincide with the uniform vector $v$ and, for an arbitrary vector $x\in\SS$, the product $\TP xx$ is in general very close to $v$, in the sense that the $1$-norm of the vector $r(x) = \TP xx - v$ is rather small.
With this notation we  obtain
\begin{align*}
   x - v & = \alpha(1-\beta) \{\b A xx - v\} + \alpha\beta r(x) %\\
    = \alpha(1-\beta)\{Ax - v\} + \alpha\beta r(x) .
\end{align*}
Broadly speaking, when $\b T$ is very sparse $\|r(x)\|_1$ is usually negligible, and we can adopt the estimate $\|x-v\|_1 \approx \alpha(1-\beta) \|A x - v\|_1$. This approximation justifies the small error $\|x-v\|_1$ observed when $\beta \approx 1$.

\begin{figure}[t]
    \centering
    \includegraphics[width=\textwidth,clip,trim=0cm 0cm  0cm 0cm]{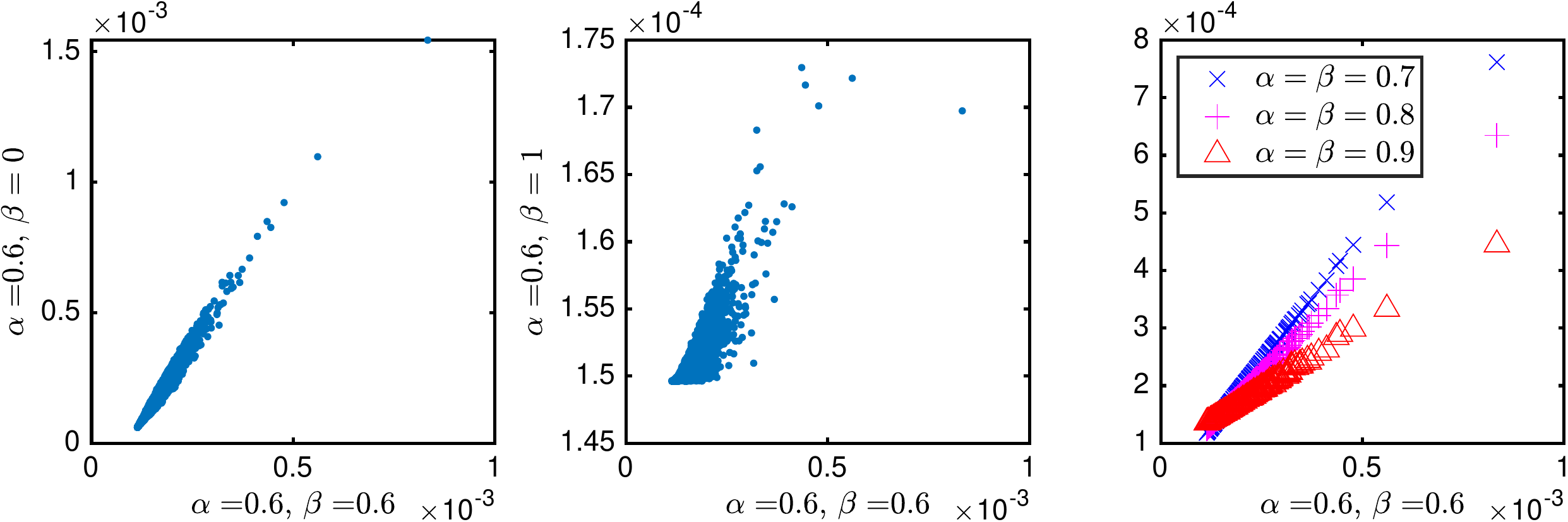}
    \caption{Scatter plots of triangle-based PageRank vectors of the {\tt socfb-Carnegie49} network for different choices of the parameters $\alpha$ and $\beta$. %Numerical solutions to triangle-based PageRank equations on the {\tt socfb-Carnegie49} network.
    From left to right: comparisons between the solution with $\alpha = \beta = 0.6$ and the standard PageRank vector (left);
    the purely triangle-based solution $\alpha=0.6$ and $\beta = 1$ (center); and other solutions with $\alpha = \beta$ (right).}
    \label{fig:Carnegie49_2}
\end{figure}

In conclusion, the most informative results are obtained when both $\alpha$ and $\beta$ are neither too small nor too close to $1$. Extensive numerical experiments we performed on several real-world networks suggest the ``reference'' choice $\alpha , \beta \approx 0.6$. 
These values yield a good balance between first- and second-order information, fulfill the condition $\alpha(1+\beta) < 1$ in Corollary \ref{cor:mlPRtriangle} and ensure a fast convergence of the fixed point iteration.
As an illustration, in Figure \ref{fig:Carnegie49_2} we compare via scatter plots the reference solution for $\alpha=\beta=0.6$ with other multilinear PageRank vectors for different values of $\alpha$ and $\beta$ chosen as follows: in the leftmost panel we compare the reference solution against the standard PageRank vector with $\alpha=0.6$; in the central panel the solution for $\alpha=\beta=0.6$ is compared against the purely triangle-based case $\alpha=0.6$ and $\beta=1$; in the rightmost panel the vector corresponding to $\alpha=\beta=0.6$ is scatter plotted against the solution for the three choices $\alpha=\beta \in \{0.7,0.8,0.9\}$.  The first two panels  show the sensitivity of the solution with respect to different choices of $\beta$ highlighting, in particular, the importance of both edge- and triangle-based walks in the graph. The last panel on the right, instead, shows that the reference solution for $\alpha=\beta=0.6$ highly correlates with other numerical solutions obtained with larger values of the coefficients $\alpha=\beta$. This illustrates that larger choices of the coefficients $\alpha=\beta$ essentially do not alter the information on the nodes, but require a much larger iteration count.

% -------------------------------------------------------
\subsection{Higher-order shifted power method and lazy random walk}
% -------------------------------------------------------

Let $\TP$ be symmetric, that is,
% $\TP_{ijk} = \TP_{ikj} = \TP_{jik} = \TP_{kji}$.
$\TP = \TP^{\langle \pi\rangle}$ for every permutation $\pi$ of $\{1,2,3\}$.
In \cite{KoMa11}, Kolda and Mayo analyzed the convergence of the
``shifted symmetric higher-order power method''
\begin{equation}\label{eq:SHOPM}
   \hat x_{t+1} = \TP x_tx_t + \alpha x_t , \qquad
   x_{t+1} = \frac{\hat x_{t+1}}{\|\hat x_{t+1}\|_2} .
\end{equation}
Their starting point is the optimization of the cubic form $f(x) = x^T (\TP xx)$
over the sphere $x^Tx = 1$, whose stationary points are, for symmetric tensors, $Z$-eigenvectors of $\b P$ 
related to the best symmetric rank-one approximation of $\TP$. The coefficient $\alpha$
can be chosen positive or negative, in order to 
make the modified function $f(x) + \alpha x^Tx$ convex or concave, respectively.
Using fixed point theory, the authors of \cite{KoMa11} prove that,
given an appropriate shift $\alpha$ the iterates in \eqref{eq:SHOPM} 
generically converge to some $Z$-eigenvector.
The shifting technique has been considered also for tensors that are not symmetric. For example, it has been considered in the framework of the multilinear PageRank \cite{MLPR} or in the case of $\ell^p$-eigenvalue computation \cite{gautier2018unifying}.

Let the coefficient $\beta(\b P)$ be defined as $\beta(\TP) = 2 \max_{\|x\|_2=1} \rho(\TP x)$, where $\rho(\TP x)$ denotes the spectral radius of the matrix $\TP x$.
One of the main results from \cite{KoMa11} is that, if 
$\TP$ is symmetric and $\alpha > \beta(\TP)$, then the method \eqref{eq:SHOPM} converges to some stationary point of $f$, which is a $Z$-eigenvector of $\b P$.

If $\b P$ is stochastic (but not necessarily symmetric), 
then it is natural to replace the sphere $x^Tx = 1$ with the simplex $\SS$ and the vector $2$-norm with the $1$-norm. With these replacements, 
and other minor notation changes,
the iteration \eqref{eq:SHOPM} boils down~to 
\begin{equation}   \label{eq:SHOPM1}
   x_{t+1} = \sigma \TP x_tx_t + (1-\sigma) x_t  
   \qquad  \sigma\in(0,1) ,
\end{equation}
which, for an initial stochastic vector $x_0$, will remain 
in $\SS$ throughout.
This iteration coincides with the higher-order power method $x_{t+1}=\TP_\sigma x_tx_t$ for the 
``shifted tensor''
$$
   \TP_\sigma = \sigma \TP + (1-\sigma) \b E \, ,
$$
where $\b E$ is 
any tensor such that $\b Exx = x$,  for all $x\in\SS$. For example,   $\b E$  can be chosen as  a  convex combination of the left and right identities $\b E^L$ and $\b E^R$, defined in \eqref{eq:Etensors}.

Note that $\TP_\sigma$ is stochastic, for any choice of $\sigma\in(0,1)$ and thus the iteration \eqref{eq:SHOPM1}
can be interpreted as a form of higher-order lazy random walk. In fact, recall that if $P\in \R^{n\times n}$ is a stochastic matrix, then the Markov chain associated with $\sigma P + (1-\sigma) I$ is called lazy random walk, as it describes a walker that, with probability $\sigma$ 
performs a transition according to $P$, and
remains in its current state otherwise. 
Hence,  we can use Theorem \ref{thm:ergodic_thm_tensor} to provide a condition on $\sigma$, in terms of the entries of $\b P$, that guarantees global convergence 
of the shifted power method \eqref{eq:SHOPM1}.

Even though the higher-order ergodicity coefficient $\Tau(\b P)$  of the original tensor may be larger than one, suitable values of $\sigma$ can ensure that Theorem \ref{thm:ergodic_thm_tensor} holds for $\TP_\sigma$. 
In fact, it is interesting to note that the function $\sigma\mapsto\Tau(\TP_\sigma)$ is continuous, piecewise linear and convex, with $\Tau(\TP_0) = \Tau(\b E) = 1$.
As $x=\TP_\sigma xx$ if and only if $x= \TP xx$, we deduce that

\begin{corollary}\label{cor:shifted_pm}
If $\TP$ is stochastic and $\Tau(\TP_\sigma)<1$ for some $\sigma \geq 0$, then $\TP$ has a unique positive $Z$-eigenvector $x\in \SS$ and the method \eqref{eq:SHOPM} converges to $x$, for any starting point $x_0\in\SS$, with a convergence rate of at least $\Tau(\TP_\sigma)$.
\end{corollary}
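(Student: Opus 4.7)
The plan is to reduce the statement to a direct application of Theorem \ref{thm:ergodic_thm_tensor} applied to the shifted tensor $\TP_\sigma = \sigma \TP + (1-\sigma)\b E$. Since $\TP$ and $\b E$ are both stochastic, so is every convex combination $\TP_\sigma$, and the iteration \eqref{eq:SHOPM1} is literally $x_{t+1} = \TP_\sigma x_t x_t$. So by Theorem \ref{thm:ergodic_thm_tensor}, as soon as $\Tau(\TP_\sigma) < 1$ we obtain a unique $y \in \SS$ with $y = \TP_\sigma y y$, global convergence of $x_{t+1} = \TP_\sigma x_t x_t$ from any $x_0 \in \SS$, and the convergence-rate bound $\|x_t - y\|_1 \leq \Tau(\TP_\sigma)^t \|x_0 - y\|_1$.

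The only thing to check is that fixed points of the shifted iteration on $\SS$ coincide with $Z$-eigenvectors of $\TP$ in $\SS$. This follows from the defining property $\b E xx = x$ for all $x \in \SS$: expanding
\[
   \TP_\sigma xx = \sigma \TP xx + (1-\sigma) \b E xx = \sigma \TP xx + (1-\sigma) x ,
\]
the equation $x = \TP_\sigma xx$ is equivalent to $\sigma (x - \TP xx) = 0$. Since $\sigma = 0$ would force $\TP_\sigma = \b E$ and hence $\Tau(\TP_\sigma) = \Tau(\b E) \geq 1$ (as $\b E xx = x$ for every $x \in \SS$), the hypothesis $\Tau(\TP_\sigma) < 1$ forces $\sigma > 0$, and we conclude that $x = \TP_\sigma xx$ if and only if $x = \TP xx$.

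Combining these two observations, the unique fixed point $y \in \SS$ of $\TP_\sigma$ is exactly the unique $Z$-eigenvector of $\TP$ in $\SS$ associated with eigenvalue $1$, and the iterates of \eqref{eq:SHOPM1} converge to it with rate at most $\Tau(\TP_\sigma)$. There is no real obstacle in this argument; the only subtle point is the trivial verification that the shift does not introduce spurious fixed points on $\SS$, which is handled by the identity $\b E xx = x$ guaranteed by the very definition of $\b E$ as a convex combination of $\b E^L$ and $\b E^R$.
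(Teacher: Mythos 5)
Your proof is correct and takes essentially the same approach as the paper: the paper's own proof is the one-line remark that the claim follows from Theorem \ref{thm:ergodic_thm_tensor} applied to $\TP_\sigma$, with the equivalence of fixed points ($x=\TP_\sigma xx$ iff $x=\TP xx$) stated in the preceding text. Your extra observation that $\Tau(\TP_\sigma)<1$ forces $\sigma>0$ because $\Tau(\b E)=1$ is a detail the paper leaves implicit, and it correctly closes the only potential loophole in the fixed-point equivalence.
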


\begin{proof}
The claim follows straightforwardly from Theorem \ref{thm:ergodic_thm_tensor} applied to $\TP_\sigma$.
\end{proof}

\begin{figure}[t]
\centering
\includegraphics[width=.8\textwidth]{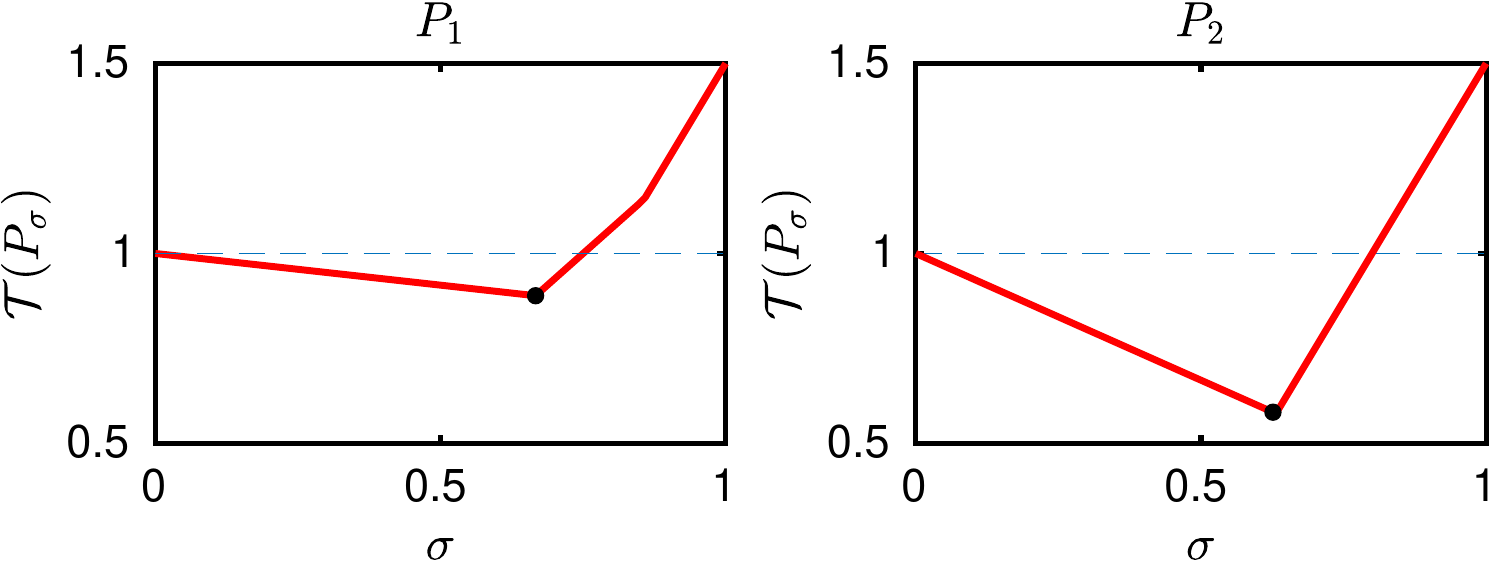}
\caption{Variation of $\Tau(\TP_\sigma)$ as $\sigma$ varies within $[0,1]$, for the two example tensors in \eqref{eq:example_tensors}. }\label{fig:shifted}
\end{figure}

In Figure \ref{fig:shifted} we show the value of $\Tau(\TP_\sigma)$ as a function of $\sigma$, for the two example tensors \eqref{eq:example_tensors} and for the choice $\b E = \frac 1 2 (\b E^L + \b E^R)$. Notice that for both the examples shown there exists an optimal $\sigma_*$ such that $\min_\sigma \Tau(\TP_\sigma)=\Tau(\TP_{\sigma_*})<1$. Thus, although the  higher-order ergodicity coefficient $\Tau(\b P)$ of the original tensor is larger than one, by Corollary \ref{cor:shifted_pm} there exists a unique positive $x\in \SS$ such that $x=\TP xx$ and we can compute it with a method that converges as $\|x_{t+1}-x\|_1\leq \Tau(\TP_{\sigma^*})^t\|x_0-x\|_1$, for an arbitrary $x_0\in \SS$.

\section{Conclusions}

This work adds to the long and continuing history of applications of tensor methods to data science by providing a novel analysis of the long-term behaviour of higher-order stochastic processes  governed by stochastic tensors. 
These types of processes are used in a large number of network science and data mining applications due to their ability to improve the underlining models and offer additional valuable insights. Even though stationary distributions of these processes are often required, fundamental mathematical questions such as the uniqueness of the distribution and the convergence behavior of the stochastic process remain unanswered. In fact, this is a relatively newly born and actively growing research area, with many open questions.

Following a natural extension of the widely used ergodicity coefficients for Markov chains,  we have introduced a new family of higher-order ergodicity coefficients for higher-order processes that provides new and easily computable conditions to ensure existence, uniqueness and convergence towards the corresponding stationary distribution.  The proposed analysis adds to previous work on uniqueness of $Z$-eigenvectors of stochastic tensors \cite{bozorgmanesh2016convergence,Li+NLAA13,LiNg} and non-negative tensors in general \cite{chang2008perron,Fried,gautier2017perron,gautier2018unifying} by providing new  conditions that are either less restrictive or are computationally easier to verify, or both.

\section*{Acknowledgments}
The main results of this work have been developed during a visiting period that F.T. has spent at the Department of Mathematics, Computer Science and Physics
of the University of Udine, Italy. He would like to thank the department and D.F. for the warm hospitality he received during that period.

% \bibliographystyle{siamplain}
% \bibliography{references_erg_coeff}

\end{document}